\newtheorem{theorem}{Theorem}
\theoremstyle{plain}
\newtheorem{claim}{Claim}
\newtheorem{conjecture}[theorem]{Conjecture}
\newtheorem{corollary}[theorem]{Corollary}
\newtheorem{definition}[theorem]{Definition}
\newtheorem{lemma}[theorem]{Lemma}
\newtheorem{proposition}[theorem]{Proposition}
\newtheorem{remark}[theorem]{Remark}
\newcommand{\Z}{\mathbb Z}
\newcommand{\Q}{\mathbb Q}
\newcommand{\C}{\mathbb C}
\newcommand{\N}{\mathbb N}
\newcommand{\R}{\mathbb R}
\newcommand{\HH}{\mathbb H}
\newcommand{\CP}{\mathbb P}
\newcommand{\im}{\operatorname{im}}
\newcommand{\Aut}{\operatorname{Aut}}
\newcommand{\Sym}{\operatorname{Sym}}
\newcommand{\id}{\operatorname{id}}
\newcommand{\Spec}{\operatorname{Spec}}
\newcommand{\Eig}{\operatorname{Eig}}
\begin{document}

\title[Multiplicative sub-Hodge structures of conjugate varieties]{Multiplicative sub-Hodge structures of conjugate varieties}

\author{Stefan Schreieder} 
\address{Max-Planck-Institut für Mathematik, Vivatsgasse 7, 53111 Bonn, Germany 
}
\email{schreied@math.uni-bonn.de}%
\date{December 5, 2013; \copyright{\ Stefan Schreieder 2013}}
\subjclass[2010]{primary 14C30, 14F25, 51M15; secondary 14F45, 14F35} 
\keywords{Conjugate varieties, Hodge structures, Hodge conjecture, Fundamental groups.}

\begin{abstract}
For any subfield $K\subseteq \C$, not contained in an imaginary quadratic extension of $\Q$, we construct conjugate varieties whose algebras of $K$-rational $(p,p)$-classes are not isomorphic. 
This compares to the Hodge conjecture which predicts isomorphisms when $K$ is contained in an imaginary quadratic extension of $\Q$; 
additionally, it shows that the complex Hodge structure on the complex cohomology algebra is not invariant under the $\Aut(\C)$-action on varieties.
In our proofs, we find simply connected conjugate varieties whose multilinear intersection forms on $H^2(-,\R)$ are not (weakly) isomorphic. 
Using these, we detect non-homeomorphic conjugate varieties for any fundamental group and in any birational equivalence class of dimension $\geq 10$.
\end{abstract}

\maketitle

\section{Introduction}
For a smooth complex projective variety $X$ and an automorphism $\sigma$ of $\C$, the conjugate variety $X^\sigma$ is defined via the fiber product diagram 
\[
\xymatrix{
X^\sigma \ar[d] \ar[r] &X\ar[d]\\
\Spec(\C) \ar[r]^{\sigma^\ast} &\Spec(\C) .}
\]
To put it another way, $X^\sigma$ is the smooth variety whose defining equations in some projective space are given by applying $\sigma$ to the coefficients of the equations of $X$.
As abstract schemes -- but in general not as schemes over $\Spec(\C)$ -- $X$ and $X^\sigma$ are isomorphic. 
This has several important consequences for the singular cohomology of conjugate varieties.
For instance, pull-back of forms induces a $\sigma$-linear isomorphism between the algebraic de Rham complexes of $X$ and $X^\sigma$.
This induces an isomorphism of complex Hodge structures
\begin{align} \label{eq:sigmaliniso}
H^\ast(X,\C)\otimes _\sigma \C \stackrel{\sim}\longrightarrow H^\ast(X^\sigma,\C) ,
\end{align}
where $\otimes _\sigma \C$ means that the tensor product is taken where $\C$ maps to $\C$ via $\sigma$, see \cite{charles-schnell}.
In particular, Hodge and Betti numbers of conjugate varieties coincide. 

The singular cohomology with $\Q_\ell$-coefficients coincides on smooth complex projective varieties with $\ell$-adic \'etale cohomology.
Since \'etale cohomology does not depend on the structure morphism to $\Spec(\C)$, we obtain isomorphisms of graded $\Q_\ell$-, resp.\ $\C$-algebras,
\begin{align} \label{eq:etaleiso}
H^\ast(X,\Q_\ell) \stackrel{\sim}\longrightarrow H^\ast(X^\sigma,\Q_\ell)\ \ \text{and}\ \ H^\ast(X,\C) \stackrel{\sim}\longrightarrow H^\ast(X^\sigma,\C) ,
\end{align}
depending on an embedding $\Q_\ell\subseteq \C$.
Since the latter isomorphism is $\C$-linear, it is not induced by (\ref{eq:sigmaliniso}).

Only recently, F.\ Charles discovered that there are however aspects of singular cohomology which are not invariant under conjugation:

\begin{theorem}[F.\ Charles \cite{charles}] \label{thm:charles}
There exist conjugate smooth complex projective varieties with distinct real cohomology algebras.
\end{theorem}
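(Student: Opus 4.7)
The plan is to exhibit conjugate varieties $X$ and $X^\sigma$ together with a numerical invariant of the graded $\R$-algebra $H^\ast(-,\R)$ that distinguishes them. The comparison isomorphisms (\ref{eq:sigmaliniso}) and (\ref{eq:etaleiso}) are respectively $\sigma$-linear and $\C$-linear but not $\R$-linear, so there is room for the real algebra structure to change; the aim is to construct an invariant that genuinely uses the $\R$-structure and is not determined by the complex cohomology algebra alone.

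First I would identify such an invariant. Since Hodge numbers are Galois invariant and the Hodge--Riemann bilinear relations compute the signatures of the canonical intersection forms from Hodge data alone, those signatures cannot distinguish $X$ from $X^\sigma$. What can still work are finer signatures: for a variety of complex dimension $n$, the higher-degree form $\alpha \mapsto \int_X \alpha^n$ on $H^2(X,\R)$, or the associated family of quadratic forms $\beta \mapsto \int_X \alpha^{n-2}\beta^2$ as $\alpha$ varies in $H^2(X,\R)$, encodes real information that is strictly finer than what is preserved under arbitrary $\C$-linear isomorphisms of the cohomology algebra.

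Second, to realize such a discrepancy I would build $X$ starting from an abelian variety $A$ of CM type. The endomorphism algebra $K \subseteq \End(A)\otimes \Q$ acts on $H^1(A,\Q)$, and its action on $H^1(A,\C)$ respects the Hodge decomposition via the CM type $\Phi \subseteq \Hom(K,\C)$. Applying $\sigma$ moves $\Phi$ to $\sigma\circ \Phi$, producing in general non-isomorphic complex abelian varieties with non-conjugate CM types. To promote this CM datum into a feature of the cup product in higher degree, one uses $A$ as an ingredient in a richer variety, for example a smooth complete intersection inside $A\times \CP^N$ cut out by divisors whose classes are chosen in terms of $\Phi$, or a blow-up of $A$ along a subvariety of CM origin; the cup product on the resulting $X$ then depends sensitively on $\Phi$.

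The main obstacle is the final comparison. Since $H^\ast(X,\C) \cong H^\ast(X^\sigma,\C)$ as $\C$-algebras, one must rule out \emph{any} graded $\R$-algebra isomorphism $H^\ast(X,\R) \cong H^\ast(X^\sigma,\R)$, not merely those induced by the available complex isomorphisms. This amounts to controlling all real forms of the given complex cohomology algebra, and for this one must exploit the rigidity imposed by the CM input to show that the signature pattern computed for $X$ is simply not realised by any putative isomorphic presentation of $H^\ast(X^\sigma,\R)$.
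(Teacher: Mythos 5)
Your starting point matches the philosophy of Charles's proof and of its generalization in Sections \ref{sec:charles}--\ref{sec:thm0'} of this paper: the asymmetry must come from an endomorphism or CM datum whose eigenvalues on the Hodge decomposition are not $\Aut(\C)$-stable, and since $H^\ast(A,\R)$ of an abelian variety is the exterior algebra on $H^1(A,\R)$ (hence determined by $b_1$ and always conjugation-invariant), one must pass to a richer variety. But the step you label ``the main obstacle'' is not a loose end to be deferred --- it is the entire content of the proof, and your proposal contains no mechanism for overcoming it. Neither a complete intersection in $A\times\CP^N$ (whose cohomology below the middle degree equals that of the ambient space by the Lefschetz hyperplane theorem, hence records nothing about the CM type) nor an unspecified ``blow-up along a subvariety of CM origin'' explains how an \emph{arbitrary} graded $\R$-algebra isomorphism is forced to see the endomorphism structure. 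The device that accomplishes this, due to Charles and Voisin, is to blow up $Y\times Y\times\CP^N$ along the \emph{graphs} of the endomorphisms together with carefully chosen auxiliary centers. The exceptional divisor classes are then intrinsically recoverable from the ring structure, because the kernels $F_i$ of cup product with $[D_i]$ on $H^2(Y\times Y)$ are precisely the graphs of $\id^\ast$, $f^\ast$, $f'^\ast$ (Lemma \ref{lem7}), and composing the resulting projections reconstructs $f^\ast$ and $f'^\ast$ as the maps $g_{j,k}$. This is what converts ``any algebra isomorphism'' into ``an isomorphism $\psi$ of $H^\ast(Y)$ intertwining $f^\ast$ with $(f^\sigma)^\ast$'' (Propositions \ref{prop:psi} and \ref{prop:psi:(1,1)}); without it, no eigenvalue or CM information is visible to an abstract isomorphism.

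Your choice of distinguishing invariant is also not the one that ultimately works, and you never verify that it differs between $X$ and $X^\sigma$. The argument in Theorem \ref{thm0'} does not compare signatures of $\alpha\mapsto\int\alpha^n$ directly; rather, once $\psi$ is known to intertwine the automorphisms and to send the polarization class $i^\ast h$ to a multiple of itself, one finds that $\psi$ must act on $H^4(Y_g)$ simultaneously by the positive number $b^2$ (from $\psi(i^\ast h)=b\cdot i^\ast h$) and by the negative number $-|\lambda|^2$ (from $\psi(\omega\cup\overline{\omega'})=\lambda\cdot\overline{\omega}\cup\overline{\omega'}$, which is forced by the eigenvalue bookkeeping for $f$ and $f'$). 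That positivity clash is where the real structure enters, and it is accessible only after the reconstruction step you have skipped. As written, your text is an accurate description of the problem and of where the non-invariance should originate, but it is a program rather than a proof.
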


It is the aim of this paper to further investigate to which extent cohomological data is invariant under the $\Aut(\C)$-action on varieties.

\subsection{Algebras of $K$-rational $(p,p)$-classes}
For any subfield $K\subseteq \C$, we denote the space of $K$-rational $(p,p)$-classes on $X$ by 
\[
H^{p,p}(X,K):=H^{p,p}(X)\cap H^{2p}(X,K) ;
\]
the corresponding graded $K$-algebra is denoted by $H^{\ast,\ast}(X,K)$.
The Hodge conjecture predicts that $H^{\ast,\ast}(X,\Q)$ is generated by algebraic cycles.
Since each algebraic cycle $Z\subseteq X$ induces a canonical cycle $Z^\sigma\subseteq X^\sigma$ and vice versa, the Hodge conjecture implies

\begin{conjecture}  \label{conj1}
The graded $\Q$-algebra $H^{\ast,\ast}(-, \Q)$ is conjugation invariant.
\end{conjecture}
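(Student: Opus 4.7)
The plan is to deduce Conjecture~\ref{conj1} from the Hodge conjecture by presenting $H^{\ast,\ast}(X,\Q)$ as a canonical quotient of the rational Chow ring $CH^\ast(X)_\Q$ and then transporting this presentation to $X^\sigma$ via the abstract scheme isomorphism $X \cong X^\sigma$ induced by $\sigma$.

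First, I would recall that the cycle class map $\operatorname{cl}_X\colon CH^\ast(X)_\Q \to H^{\ast,\ast}(X,\Q)$ is a homomorphism of graded $\Q$-algebras (intersection product goes to cup product). Under the Hodge conjecture it is surjective, yielding
\[
H^{\ast,\ast}(X,\Q)\ \cong\ CH^\ast(X)_\Q\, /\, \ker(\operatorname{cl}_X),
\]
and analogously for $X^\sigma$. Since $X$ and $X^\sigma$ are isomorphic as abstract schemes, the assignment $Z\mapsto Z^\sigma$ defines an isomorphism of graded $\Q$-algebras $\sigma_\ast\colon CH^\ast(X)_\Q \to CH^\ast(X^\sigma)_\Q$, so it suffices to show that $\sigma_\ast$ identifies $\ker(\operatorname{cl}_X)$ with $\ker(\operatorname{cl}_{X^\sigma})$.

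For this I would factor $\operatorname{cl}_X$ through the $\ell$-adic \'etale cycle class map, using the Artin comparison $H^\ast(X,\Q)\otimes_\Q \Q_\ell \cong H^\ast_{\text{\'et}}(X,\Q_\ell)$. The \'etale cycle class map, being defined purely in terms of the abstract scheme, is manifestly compatible with $\sigma_\ast$ and with the second isomorphism of~(\ref{eq:etaleiso}). Since the Artin comparison is injective, one has $\ker(\operatorname{cl}_X) = \ker(\operatorname{cl}^{\text{\'et}}_X)$, and hence $\sigma_\ast$ carries $\ker(\operatorname{cl}_X)$ onto $\ker(\operatorname{cl}_{X^\sigma})$, giving the desired isomorphism of graded $\Q$-algebras $H^{\ast,\ast}(X,\Q)\cong H^{\ast,\ast}(X^\sigma,\Q)$.

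The main point of care, modulo the Hodge conjecture itself, is the bookkeeping needed to compare the Betti and \'etale cycle class maps through Artin's comparison (with its Tate twists), and to verify that the cup product, the intersection product, and the conjugation isomorphism on \'etale cohomology are all mutually compatible. However, all these functoriality properties are standard and are already implicit in the $\C$-algebra isomorphism from~(\ref{eq:etaleiso}) quoted in the introduction.
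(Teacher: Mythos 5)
Your proposal is correct and is essentially the argument the paper has in mind: the paper derives Conjecture~\ref{conj1} from the Hodge conjecture in a single sentence (cycles on $X$ correspond canonically to cycles on $X^\sigma$), and your write-up simply makes that precise by adding the standard identification of the kernels of the Betti and $\ell$-adic cycle class maps via Artin's comparison, so that relations as well as generators are transported. Note that, as in the paper, this is only a conditional derivation from the Hodge conjecture, not an unconditional proof of the conjecture.
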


Apart from the (few) cases where the Hodge conjecture is known, and apart from Deligne's result \cite{deligne} which settles Conjecture \ref{conj1} for abelian varieties, the above conjecture remains wide open, see \cite{charles-schnell,voisin:aspects}.

The above consequence of the Hodge conjecture motivates the investigation of potential conjugation invariance of $H^{\ast,\ast}(-,K)$ for an arbitrary field of coefficients $K\subseteq \C$.
If $K=\Q(iw)$ with $w^2\in \N$ is an imaginary quadratic extension of $\Q$, then the real part, as well as $1/w$ times the imaginary part of a $\Q(iw)$-rational $(p,p)$-class is $\Q$-rational.
Hence,
\[
H^{\ast,\ast}(-,\Q(iw))\cong H^{\ast,\ast}(-,\Q)\otimes _\Q \Q(iw) .
\]
It follows that the Hodge conjecture predicts the conjugation invariance of $H^{\ast,\ast}(-,K)$, when $K$ is contained in an imaginary quadratic extension of $\Q$.
In this paper, we are able to settle all remaining cases:

\begin{theorem} \label{thm:conj1}
Let $K\subseteq \C$ be a subfield, not contained in an imaginary quadratic extension of $\Q$. 
Then there exist conjugate smooth complex projective varieties whose graded algebras of $K$-rational $(p,p)$-classes are not isomorphic.
\end{theorem}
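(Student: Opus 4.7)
My plan is to deduce Theorem~\ref{thm:conj1} from the main technical result of the paper (alluded to in the abstract): the existence of simply connected conjugate smooth complex projective varieties $Y, Y^\sigma$ whose multilinear intersection forms on $H^2(-, \R)$ are not weakly isomorphic. First I would arrange the construction so that $Y$ has $H^2(Y, \C) = H^{1,1}(Y)$ and $H^\ast(Y, \Q)$ is generated as a $\Q$-algebra by $H^2(Y, \Q)$. Both properties can be ensured by taking an iterated blow-up of $\CP^N$ along suitable $\sigma$-conjugate smooth subvarieties with these properties. Under these conditions, for every subfield $K \subseteq \C$,
\[
H^{p,p}(Y, K) \;=\; \im\bigl(H^2(Y, K)^{\otimes p} \longrightarrow H^{2p}(Y, K)\bigr),
\]
so the graded $K$-algebra $H^{\ast,\ast}(Y, K)$ is the subalgebra of $H^\ast(Y, K)$ generated by $H^2(Y, K)$, and its isomorphism class is completely recorded by the $K$-multilinear top-intersection form $q_Y^K$ on $H^2(Y, K)$.

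With this reduction in place, the theorem follows from the key claim: \emph{if $K \not\subseteq \Q(\sqrt{-n})$ for every $n \in \N$, then an isomorphism of $K$-multilinear forms $q_Y^K \cong q_{Y^\sigma}^K$ implies an isomorphism of $\R$-multilinear forms $q_Y^\R \cong q_{Y^\sigma}^\R$}. Granting this claim, any graded $K$-algebra isomorphism $H^{\ast,\ast}(Y, K) \cong H^{\ast,\ast}(Y^\sigma, K)$ would force an $\R$-isomorphism of intersection forms, contradicting the main technical result. I would prove the key claim by splitting on whether $K \cap \R$ exceeds $\Q$. If $K \cap \R \supsetneq \Q$, pick $\beta \in (K \cap \R) \setminus \Q$; since the intersection forms are $\Q$-rational, any $K$-iso of forms restricts to a $\Q(\beta)$-iso, and tensoring with $\R$ over $\Q(\beta) \subseteq \R$ produces the desired $\R$-iso. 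If $K \cap \R = \Q$, the hypothesis that $K$ is not contained in an imaginary quadratic extension forces $K$ to contain a transcendental element; here I would argue by a specialization argument, replacing a transcendence basis of the finitely generated subfield spanned by the matrix entries of the $K$-iso by generic rational values to extract a $\Q$-iso of forms, which then tensors up to $\R$.

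The main obstacle will be the second sub-case, $K \cap \R = \Q$: controlling the specialization so that it actually produces an isomorphism of $\Q$-forms (equivalently, checking that the locus of isometries is Zariski-dense over $\Q$ in the parameter space of matrices) is delicate. It may require a fine choice of $Y$ so that $q_Y^\Q$ and $q_{Y^\sigma}^\Q$ have rigid enough automorphism groups, or an argument that exploits the $\Q$-rationality of the whole cup-product structure rather than just the top-intersection form, to rule out the transcendental obstruction exhibited classically by quadratic forms (e.g.\ $x^2+y^2$ vs $x^2+2y^2$). This interface between the algebraic nature of $K$ and the topological rigidity of the forms arising from the main construction is the real heart of the argument.
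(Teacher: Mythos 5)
Your reduction rests on two requirements that turn out to be incompatible with the input you need to feed it. To make $H^{\ast,\ast}(Y,K)$ equal to the subalgebra generated by $H^2(Y,K)$ you insist on $H^2(Y,\C)=H^{1,1}(Y)$ and on generation of $H^\ast(Y,\Q)$ in degree two; for an iterated blow-up of $\CP^N$ this forces $H^2$ to be spanned by the hyperplane class and the exceptional divisors, i.e.\ by $\Q$-rational algebraic classes whose top intersection numbers are Chern/Segre numbers of the (algebraic) centers and hence conjugation-invariant. For such $Y$ the multilinear form on $H^2(-,\R)$ is literally the same integral tensor for $Y$ and $Y^\sigma$, so the contradiction you aim for can never arise. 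This is not an accident: the examples of Theorem \ref{thm0} rely essentially on surfaces $Y_g$ with geometric genus $g\geq 1$ and on automorphisms whose eigenvalue sets on $H^{1,1}$ (as opposed to $H^2$) fail to be $\Aut(\C)$-invariant (see the role of $\omega\cup\overline{\omega'}$ in the proof of Theorem \ref{thm0'} and Remark \ref{rem:Hodgeconj}); imposing $H^{1,1}=H^2$ destroys exactly this mechanism, since eigenvalue sets of automorphisms acting on $H^2$ are automatically $\Aut(\C)$-invariant. So no variety satisfying your normalization can have a non-conjugation-invariant $\R$-form on $H^2$ by the paper's method.

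The second gap is the key claim itself. A $K$-linear isometry between two $\Q$-rational forms is a matrix with entries in $K$ and does not ``restrict'' to a $\Q(\beta)$-isometry for $\beta\in K\cap\R$; isomorphism of multilinear forms does not descend along field extensions, which is precisely the phenomenon you yourself cite ($x^2+y^2$ versus $x^2+2y^2$). Worse, for $K=\C$ the claim is refuted inside the paper: by (\ref{eq:etaleiso}) the $\C$-multilinear intersection forms on $H^2(-,\C)$ of conjugate varieties are always isomorphic, while by Theorem \ref{thm0} the forms on $H^2(-,\R)$ need not be; and you leave the subcase $K\cap\R=\Q$ explicitly unresolved. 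The paper's route is structurally different and avoids both issues: for $K\neq\R,\C$ it produces conjugate abelian-surface products and Kummer K3 surfaces with $\dim H^{1,1}(X,K)\neq\dim H^{1,1}(X^\sigma,K)$ (Theorem \ref{thm:H^{p,p}}), so no algebra isomorphism is possible for dimension reasons; for $K=\C$ it works with the forms on $H^{1,1}(-,\C)$ --- not on $H^2(-,\C)$ --- via Proposition \ref{prop:psi:(1,1)} and Theorem \ref{thm0'}; and $K=\R$ is reduced to $K=\C$ by extension of scalars, the opposite direction from your descent.
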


By Theorem \ref{thm:conj1}, there are conjugate smooth complex projective varieties $X$, $X^\sigma$ with
\[
H^{\ast,\ast}(X,\C)\ncong H^{\ast,\ast}(X^\sigma,\C) .
\] 
This shows the following:

\begin{corollary} \label{cor:conj1}
The complex Hodge structure on the complex cohomology algebra of smooth complex projective varieties is not invariant under the $\Aut(\C)$-action on varieties.
\end{corollary}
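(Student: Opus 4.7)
The plan is to deduce the corollary as a direct consequence of Theorem \ref{thm:conj1} applied to $K=\C$. Since $\C$ is visibly not contained in any imaginary quadratic extension of $\Q$, the theorem hands us conjugate smooth complex projective varieties $X$, $X^\sigma$ whose graded $\C$-algebras $H^{\ast,\ast}(-,\C)$ are not isomorphic; this is exactly the non-isomorphism displayed immediately before the corollary.

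It then remains to argue that this non-isomorphism formally rules out an $\Aut(\C)$-invariance of the complex Hodge structure on the complex cohomology algebra. First I would unwind the definitions: by construction $H^{p,p}(X,\C)=H^{p,p}(X)\cap H^{2p}(X,\C)=H^{p,p}(X)$, so the graded $\C$-algebra $H^{\ast,\ast}(X,\C)$ is exactly the subalgebra of $H^{\ast}(X,\C)$ cut out by the $(p,p)$-pieces of the Hodge decomposition. Any graded $\C$-algebra isomorphism $H^\ast(X,\C)\stackrel{\sim}\to H^\ast(X^\sigma,\C)$ that also preserves the Hodge decomposition must therefore restrict to a graded $\C$-algebra isomorphism $H^{\ast,\ast}(X,\C)\stackrel{\sim}\to H^{\ast,\ast}(X^\sigma,\C)$.

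Contrapositively, the pair $(X,X^\sigma)$ produced by Theorem \ref{thm:conj1} cannot admit any cohomology algebra isomorphism respecting the Hodge decomposition, which is precisely the failure of $\Aut(\C)$-invariance asserted by the corollary. There is no genuine obstacle: the entire content of the statement is packaged inside Theorem \ref{thm:conj1}, and the corollary itself is a one-step formal extraction.
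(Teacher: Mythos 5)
Your proposal is correct and matches the paper's own argument: the paper derives the corollary exactly by applying Theorem \ref{thm:conj1} with $K=\C$ to obtain conjugate varieties with $H^{\ast,\ast}(X,\C)\ncong H^{\ast,\ast}(X^\sigma,\C)$, and since $H^{\ast,\ast}(-,\C)$ is determined by the complex Hodge structure together with the algebra structure of $H^\ast(-,\C)$, no Hodge-structure-preserving algebra isomorphism can exist. Your explicit unwinding of $H^{p,p}(X,\C)=H^{p,p}(X)$ and the restriction argument is just the formal content the paper leaves implicit.
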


Corollary \ref{cor:conj1} is in contrast to (\ref{eq:sigmaliniso}) and (\ref{eq:etaleiso}) which show that the complex Hodge structure in each degree, as well as the $\C$-algebra structure of $H^\ast(-,\C)$ are $\Aut(\C)$-invariant. 
The above corollary also shows that there is no embedding $\Q_\ell\hookrightarrow\C$ which guarantees that the isomorphism (\ref{eq:etaleiso}), induced by isomorphisms between $\ell$-adic \'etale cohomologies, respects the complex Hodge structures.

Theorem \ref{thm:conj1} will follow from Theorems \ref{thm:H^{p,p}} and \ref{thm0} below.
Firstly, if $K$ is different from $\R$ and $\C$, then  Theorem \ref{thm:conj1} follows from 

\begin{theorem} \label{thm:H^{p,p}}
Let $K\subseteq \C$ be a subfield, not contained in an imaginary quadratic extension of $\Q$.
If $K$ is different from $\R$ and $\C$, then there exist for any $p\geq 1$ and in any dimension $\geq p+1$ conjugate smooth complex projective varieties $X$, $X^\sigma$ with 
\[
H^{p,p}(X,K) \ncong H^{p,p}(X^\sigma,K) .
\]
\end{theorem}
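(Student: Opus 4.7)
The strategy has two parts: first, reduce the statement for general $p$ and dimension to conjugate surfaces with $p=1$; second, construct such surfaces using the paper's main technical construction.

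\medskip

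\textit{Reduction to conjugate surfaces.}
Suppose we have constructed conjugate smooth projective surfaces $S$, $S^\sigma$ with
\[
\dim_K H^{1,1}(S,K) \ne \dim_K H^{1,1}(S^\sigma,K).
\]
For any $p\ge 1$ and $n\ge p+1$, set $X := S\times\CP^{n-2}$ and $X^\sigma := S^\sigma\times\CP^{n-2}$; these are conjugate via the same $\sigma$. The K\"unneth formula, together with $H^{\ast,\ast}(\CP^{n-2},K)$ being generated over $K$ by the $\Q$-rational hyperplane class, and $H^{a,a}(S,K)=0$ for $a>2$ with $H^{0,0}(S,K)=H^{2,2}(S,K)=K$, gives
\[
\dim_K H^{p,p}(X,K) = \dim_K H^{1,1}(S,K) + c(p,n),
\]
for a constant $c(p,n)$ independent of $S$. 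Hence $H^{p,p}(X,K)$ and $H^{p,p}(X^\sigma,K)$ have different $K$-dimensions, so they are not isomorphic as $K$-vector spaces.

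\medskip

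\textit{The surface case.}
Via the \'etale comparison, identify the $\Q$-vector spaces $V := H^2(S,\Q) = H^2(S^\sigma,\Q)$. Inside $V\otimes_\Q\C$ sit two $h^{1,1}$-dimensional $\C$-subspaces, $H^{1,1}(S)$ and (the image of) $H^{1,1}(S^\sigma)$; the $K$-rational $(1,1)$-classes are the respective intersections with $V\otimes_\Q K$. The plan is to combine the paper's main technical construction of conjugate simply connected surfaces whose multilinear intersection forms on $H^2(-,\R)$ are not (weakly) isomorphic with the hypothesis ``$K\not\subseteq\Q(i\sqrt{d})$, $K\ne\R,\C$''. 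Concretely, choose $\alpha\in K$ and $\sigma\in\Aut(\C)$ exploiting this hypothesis, and produce a family $\{S_t\}_{t\in\C}$ of surfaces carrying an $\R$-rational Hodge class $\omega_t\in H^{1,1}(S_t,\R)$ whose Pl\"ucker coordinates in a fixed $\Q$-basis of $V$ are rational functions of $t$. With $S := S_\alpha$ and $S^\sigma = S_{\sigma(\alpha)}$, the class $\omega_\alpha$ will be $K$-rational, whereas the class $\omega_{\sigma(\alpha)}$ on the conjugate surface will not lie in $V\otimes_\Q K$; this produces the desired inequality of $K$-ranks.

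\medskip

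\textit{Main obstacle.}
The crux is arranging the family $\{S_t\}$ so that the variation of $K$-rationality is \emph{entirely} concentrated in $\omega_t$: the complementary part of the Hodge structure must be rigid over $\Q$, lest uncontrolled compensating jumps elsewhere in $H^{1,1}(-,K)$ spoil the dimension inequality. This is exactly where the paper's refined construction of surfaces with prescribed real intersection form becomes essential. A second subtlety is handling $K$ that are $\Aut(\C)$-stable, e.g.\ $K=\bar\Q$: here the naive Galois argument breaks down since any $\sigma(\alpha)$ remains in $K$, so one must instead exhibit that the $\C$-subspace $H^{1,1}$ is $K$-defined inside $V\otimes_\Q\C$ for $S_\alpha$ but not for $S_{\sigma(\alpha)}$ on purely Hodge-theoretic grounds; verifying this uniformly across all admissible $K$ is the principal technical difficulty.
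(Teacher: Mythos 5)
Your reduction to the case $p=1$, $n=2$ via products with $\CP^{n-2}$ is correct and is exactly the paper's first step. The problem is the surface case, which in your write-up is a plan rather than a proof: the family $\{S_t\}$, the class $\omega_t$, and the choice of $\alpha$ and $\sigma$ are never constructed, and your own ``Main obstacle'' paragraph concedes that the two essential difficulties (rigidifying the rest of $H^{1,1}$, and handling $\Aut(\C)$-stable fields such as $\overline\Q$) are unresolved. Moreover, the specific tool you propose to import --- the construction behind the theorem on non-isomorphic real intersection forms --- is the wrong one here. Those varieties are defined over cyclotomic number fields, hence over $\overline\Q$, and the paper explicitly remarks that the present theorem \emph{fails} if one restricts to varieties defined over $\overline\Q$: for such varieties the conjugates are distinguished by the multilinear form on $H^{1,1}$, not by $\dim H^{1,1}(-,K)$, which is conjugation-invariant in those examples. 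Any proof of the dimension statement must therefore produce surfaces with transcendental moduli, which your sketch does not do.

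What the paper actually does in the surface case is a four-way case distinction on $K_\R:=K\cap\R$ (uncountable; countable and $\neq\Q$; $K$ uncountable with $K_\R=\Q$; $K$ countable with $K_\R=\Q$). In the first three cases it uses products of elliptic curves $E_{ia}\times E_{ib}$ together with an explicit period computation showing that $\dim H^{1,1}(E_{ia}\times E_{ib},L)\in\{2,3,4\}$ according to whether $a/b$ and $ab$ lie in $L$; the required automorphism $\sigma$ exists because the relevant $j$-invariants are arranged to be algebraically independent over $\Q$ (for Case 2 this needs a genuine lemma, proved via the $q$-expansion of $j$, that $j(\mu)$ and $j(\lambda\mu)$ are algebraically independent for suitable $\mu$ when $\lambda$ is irrational). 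The last case, $K$ countable with $K_\R=\Q$ (e.g.\ $K=\Q(t)$ for $t$ a non-quadratic algebraic number), cannot be reached by products of elliptic curves at all; there the paper switches to Kummer K3 surfaces of principally polarized abelian surfaces $A_{M(\mu,t)}$, computes $\dim H^{1,1}$ from the period matrix, and uses the transcendence of Riemann theta constants (an asymptotic argument with the Fourier expansion) to find the conjugation $\sigma$. None of this machinery appears in your proposal, so the core of the theorem remains unproved.
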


It is worth noting that Theorem \ref{thm:H^{p,p}} does not remain true if one restricts to smooth complex projective varieties that can be defined over $\overline \Q$, see Remark \ref{rem:Qbar}. 

Next, the case $K=\R$ in Theorem \ref{thm:conj1} follows from the case where $K=\C$ since
\[
H^{\ast,\ast}(X,\R)\otimes_\R \C \cong H^{\ast,\ast}(X,\C) 
\]
holds; so it remains to deal with $K=\C$.
As the isomorphism type of the $\C$-vector space $H^{p,p}(-,\C)$ coincides on conjugate varieties, we now really need to make use of the algebra structure of $H^{\ast,\ast}(-,\C)$. 
Remarkably, it turns out that it suffices to use only a very little amount of the latter, namely the symmetric multilinear intersection form  
\[
H^{1,1}(X,\C)^{\otimes n} \longrightarrow H^{2n}(X,\C) ,
\]
where $n=\dim (X)$.
We explain our result, Theorem \ref{thm0} below, in the next subsection.

\subsection{Multilinear intersection forms on $H^{1,1}(-,K)$ and $H^2(-,K)$}
We say that two symmetric $K$-multilinear forms $V^{\otimes n} \to K$ and $W^{\otimes n} \to K$ on two given $K$-vector spaces $V$ and $W$ are (weakly) isomorphic if there exists a $K$-linear isomorphism $V\cong W$ which respects the given multilinear forms (up to a multiplicative constant).
If $K$ is closed under taking $n$-th roots, then weakly isomorphic intersection forms are already isomorphic.

For a smooth complex projective variety $X$ of dimension $n$, cup product defines symmetric multilinear forms
\[
H^{1,1}(X,K)^{\otimes n}\longrightarrow H^{2n}(X,K)\cong K \ \ \text{and}\ \ H^{2}(X,K)^{\otimes n}\longrightarrow H^{2n}(X,K)\cong K ,
\]
where $H^{2n}(X,K)\cong K$ is the canonical isomorphism that is induced by integrating de Rham classes over $X$.
The weak isomorphism types of the above multilinear forms are determined by the isomorphism types of the graded $K$-algebras $H^{\ast,\ast}(X,K)$ and $H^{2\ast}(X,K)$ respectively.

By the Lefschetz theorem, the Hodge conjecture is true for $(1,1)$-classes and so it is known that the isomorphism type of the intersection form on $H^{1,1}(-,\Q)$ is conjugation invariant.
Additionally, it follows from (\ref{eq:etaleiso}) that the isomorphism types of the intersection forms on $H^{2}(-,\Q_\ell)$ and $H^{2}(-,\C)$ are invariant under conjugation.
Our result, which settles the case $K=\C$ in Theorem \ref{thm:conj1}, contrasts these positive results:

\begin{theorem} \label{thm0}
There exist in any dimension $\geq 4$ simply connected conjugate smooth complex projective varieties whose $\R$-multilinear intersection forms on $H^2(-,\R)$, as well as $\C$-multilinear intersection forms on $H^{1,1}(-,\C)$, are not weakly isomorphic.
\end{theorem}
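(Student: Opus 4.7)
\textbf{Reduction to dimension four with $h^{2,0}=0$.} The plan is to construct a simply connected smooth projective fourfold $X$ and a conjugate $X^\sigma$ with $h^{2,0}(X)=0$, for which the $\C$-linear symmetric $4$-form on $H^2(X,\C)$ is not weakly isomorphic to the one on $H^2(X^\sigma,\C)$. Since $h^{2,0}=0$ forces $H^{1,1}(X,\C)=H^2(X,\C)$, this yields the $H^{1,1}(-,\C)$-statement, and a real weak isomorphism on $H^2(-,\R)$ would complexify to a complex one, so the $H^2(-,\R)$-statement follows as well. For arbitrary dimension $n\geq 4$, replace $X$ by $Y := X \times \CP^{n-4}$: since $H^2(Y,\R) = H^2(X,\R) \oplus \R h$ with $h$ the pulled-back hyperplane class of $\CP^{n-4}$, and $h$ can be characterised intrinsically from the intersection form (for example as the kernel of a naturally associated pairing built from the $n$-form), a weak isomorphism of the $n$-forms on $H^2(Y,\R)$ and $H^2(Y^\sigma,\R)$ descends to one of the $4$-forms on $H^2(X,\R)$ and $H^2(X^\sigma,\R)$. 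Thus it suffices to treat $n=4$.

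\textbf{Construction.} Fix a number field $L \subset \C$, an embedding $\sigma : L \hookrightarrow \C$, and $\alpha \in L$ with $\sigma(\alpha)\neq\alpha$. I would build a transverse configuration $\mathcal Z = \{Z_1,\ldots,Z_r\}$ of smooth, simply connected, rational subvarieties of $\CP^4$ defined over $L$, whose relative positions depend non-trivially on $\alpha$. Taking the iterated blow-up $X \to \CP^4$ along $\mathcal Z$ (in any order keeping each successive centre smooth) produces a smooth projective fourfold which is simply connected, has $h^{2,0}=0$, and whose conjugate $X^\sigma$ is the analogous blow-up along $\sigma(\mathcal Z)$. In the basis $\{H, E_1, \ldots, E_r\}$ of $H^2(X,\C)$, where $H$ is the pulled-back hyperplane class of $\CP^4$ and the $E_i$ are the exceptional divisors, the structure constants of the intersection form are explicit polynomials in the mutual intersection numbers of the $Z_i$, hence algebraic functions of $\alpha$; the analogous statement for $X^\sigma$ uses $\sigma(\alpha)$ throughout.

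\textbf{Distinguishing the forms and main obstacle.} To the symmetric $4$-form $F$ on the $\C$-vector space $V=H^2(X,\C)$ I would associate its projective quartic hypersurface $Q_F := \{[v]\in\CP(V) : F(v,v,v,v)=0\}$; two $4$-forms are weakly $\C$-isomorphic iff their quartics are projectively equivalent over $\C$. To separate $F_X$ from $F_{X^\sigma}$ it then suffices to extract from $Q_F$ a genuine $\PGL(V,\C)$-invariant -- such as the cross-ratio of a canonically distinguished quadruple of singular points of $Q_F$, or a $j$-invariant of a canonically inscribed elliptic curve in its Hessian -- whose value is a function of $\alpha$ that is not preserved by $\sigma$. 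The main obstacle is precisely that the ambient symmetry group $\PGL(V,\C)$ is vastly larger than the geometric automorphism group of $X$, so almost every ``coordinate-level'' trace of $\alpha$ is washed out under projective equivalence; the configuration $\mathcal Z$ must therefore be engineered so that $\alpha$ re-emerges as an intrinsically projective invariant of $Q_F$, and verifying this recoverability is the technical heart of the argument. The hypothesis $n\geq 4$ enters because $\dim V \geq 5$ provides enough room for such a non-trivial projective invariant of a quartic to exist.
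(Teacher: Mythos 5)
Your proposal founders on a point the paper itself flags in the introduction: by the comparison with $\ell$-adic \'etale cohomology, conjugate varieties always have isomorphic graded $\C$-algebras $H^\ast(X,\C)\cong H^\ast(X^\sigma,\C)$ (isomorphism (\ref{eq:etaleiso})), so the $\C$-multilinear intersection form on $H^2(-,\C)$ is \emph{always} weakly isomorphic for conjugate varieties. A fourfold with $h^{2,0}=0$ for which the $4$-forms on $H^2(X,\C)$ and $H^2(X^\sigma,\C)$ differ therefore does not exist; worse, imposing $h^{2,0}=0$ forces $H^{1,1}(-,\C)=H^2(-,\C)$, so for such varieties the $H^{1,1}(-,\C)$ statement is also unattainable. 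The content of the theorem lies precisely in the fact that $H^2(-,\R)$ and $H^{1,1}(-,\C)$ are \emph{not} determined by the abstract $\C$-algebra: the first is a real form and the second is cut out by the Hodge decomposition, neither of which is transported by the \'etale isomorphism. Your projective-invariant strategy for the quartic $Q_F$ can only ever produce invariants of the $\C$-form, hence can never separate conjugates. Independently, your proposed construction is also too algebraic: for an iterated blow-up of $\CP^4$ along rational centres defined over a number field, all structure constants of $H^\ast(X,\Z)$ are intersection numbers and Chern numbers, i.e.\ discrete data preserved under $\sigma$, so the parameter $\alpha$ leaves no trace in the cohomology ring at all — not even over $\Z$.

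For comparison, the paper's proof must (and does) inject non-algebraic, $\sigma$-sensitive data into the ring structure. It starts from surfaces $Y_g$ carrying automorphisms $f,f'$ whose eigenvalues on $H^{1,1}(Y_g,\C)$ form sets that are not $\Aut(\C)$-invariant; the Charles--Voisin blow-up of $Y_g\times Y_g\times\CP^N$ along the graphs of these automorphisms encodes $f^\ast,f'^\ast$ into the degree-two intersection form of a complete intersection $T_{g,n}$ (Propositions \ref{prop:psi} and \ref{prop:psi:(1,1)}). The final contradictions are then genuinely non-complex-linear: for $H^{1,1}(-,\C)$ one uses that a required eigenclass would have to live in $H^{0,2}$, and for $H^2(-,\R)$ one uses a positivity argument (the induced map must act on $H^4(Y_g,\R)$ by a positive scalar, yet is forced to act by $-|\lambda|^2$). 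Any successful argument needs an invariant of this kind — one that sees the real structure or the Hodge filtration — and your proposal contains none.
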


The examples we will construct in the proof of Theorem \ref{thm0} in Section \ref{sec:thm0'} are defined over cyclotomic number fields.
For instance, one series of examples is defined over $\Q[\zeta_{12}]$; their complex $(1,1)$-classes are spanned by $\Q[\sqrt 3]$-rational ones.
This yields examples $X$, $X^\sigma$ such that the intersection forms on the equidimensional vector spaces $H^{1,1}(X,\Q[\sqrt 3])$ and $H^{1,1}(X^\sigma,\Q[\sqrt 3])$ are not weakly isomorphic, see Corollary \ref{cor:Kg}.

It follows from Theorem \ref{thm0} that the even-degree real cohomology algebra $H^{2\ast}(-,\R)$, as well as the subalgebra $SH^2(-,\R)$ which is generated by $H^2(-,\R)$, is not invariant under conjugation.
Since Charles's examples have dimension $\geq 12$ and fundamental group $\Z^8$, Theorem \ref{thm0} generalizes Theorem \ref{thm:charles} in several different directions.
Another generalization of  Theorem \ref{thm:charles}, namely Theorem \ref{thm1} below, is explained in the following subsection.

\subsection{Applications to conjugate varieties with given fundamental group.}
Conjugate varieties are homeomorphic in the Zariski topology but in general not in the analytic one.
Historically, this was first observed by Serre in \cite{serre}, who constructed conjugate varieties whose fundamental groups are infinite but non-isomorphic.
The first non-homeomorphic conjugate varieties with finite fundamental group were constructed by Abelson \cite{abelson}.
His construction however only works for non-abelian finite groups which satisfy some strong cohomological condition.

Other examples of conjugate varieties which are not homeomorphic (or weaker: not deformation equivalent) are constructed in \cite{catanese,charles,vakil,rajan,shimada}.
Again, the fundamental groups of these examples are of special shapes.
In particular, our conjugate varieties in Theorem \ref{thm0} are the first known non-homeomorphic examples which are simply connected.
This answers a question, posed more than $15$ years ago by D. Reed in \cite{reed}.
Reed's question was our initial motivation to study conjugate varieties and leads us to the more general problem of determining those fundamental groups for which non-homeomorphic conjugate varieties exist.
Since the fundamental group of smooth varieties is a birational invariant, the problem of detecting non-homeomorphic conjugate varieties in a given birational equivalence class refines this problem.
Building upon the examples we will construct in the proof of Theorem \ref{thm0}, we will be able to prove the following:

\begin{theorem} \label{thm1}
Any birational equivalence class of complex projective varieties in dimension $\geq 10$ contains conjugate smooth complex projective varieties whose even-degree real cohomology algebras are non-isomorphic.
\end{theorem}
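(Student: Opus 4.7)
The plan is to reduce Theorem \ref{thm1} to Theorem \ref{thm0} by a blowup construction which grafts a simply connected conjugate pair $X,X^{\sigma}$ of $4$-folds from Theorem \ref{thm0} into the exceptional divisor of a point-blowup of any given birational class. Let $Y$ be a smooth complex projective variety of dimension $n\geq 10$. Since $Y$ is defined over some countable subfield $K\subseteq \C$, and since Theorem \ref{thm0} produces examples over cyclotomic number fields $\mathbb{Q}[\zeta_{N}]$ with considerable flexibility in $N$, one may choose $N$ so that $\mathbb{Q}[\zeta_{N}]\not\subseteq K$ and then extend a non-trivial element of $\Gal(\mathbb{Q}[\zeta_{N}]/\mathbb{Q})$ to an automorphism $\sigma\in\Aut(\C/K)$. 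Then $Y^{\sigma}\cong Y$ while $X^{\sigma}$ is genuinely non-trivially conjugate to $X$, and any $K$-rational point of $Y$ is $\sigma$-invariant.

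Pick such a point $p\in Y$ and blow it up to obtain $\pi\colon \tilde Y\to Y$ with exceptional divisor $E\cong \mathbb{P}^{n-1}$. Because $\dim X=4$, the variety $X$ embeds in $\mathbb{P}^{9}$, and because $n-1\geq 9$ this yields an embedding $X\hookrightarrow \mathbb{P}^{9}\hookrightarrow E$ whose $\sigma$-conjugate is the corresponding embedding $X^{\sigma}\hookrightarrow E$. Set
\[
Y_{1}\;:=\;\operatorname{Bl}_{X}(\tilde Y),\qquad Y_{2}\;:=\;\operatorname{Bl}_{X^{\sigma}}(\tilde Y).
\]
Both $Y_{1}$ and $Y_{2}$ are smooth projective, birational to $Y$ by construction, and the $\sigma$-invariance of $\tilde Y$, of $E$, and of the linear subspace $\mathbb{P}^{9}\subset E$ gives $Y_{2}=Y_{1}^{\sigma}$.

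To prove $H^{2\ast}(Y_{1},\R)\ncong H^{2\ast}(Y_{2},\R)$ I would iterate the blowup formula to obtain the additive decomposition
\[
H^{\ast}(Y_{j},\R)\;\cong\;H^{\ast}(Y,\R)\;\oplus\;\bigoplus_{i=1}^{n-1}\R\,h^{i}\;\oplus\;\bigoplus_{i=1}^{n-5}H^{\ast-2i}(X_{j},\R)
\]
(with $X_{1}=X$ and $X_{2}=X^{\sigma}$), and analyse the cup product via the Chern classes of the normal bundles $N_{X_{j}/\tilde Y}$. The key step is then a ring-theoretic extraction: the graded $\R$-algebra $H^{2\ast}(Y_{j},\R)$ should determine the graded $\R$-algebra $H^{2\ast}(X_{j},\R)$ up to isomorphism, so that the non-isomorphism supplied by Theorem \ref{thm0} transfers from the centres $X_{j}$ to the blowups $Y_{j}$.

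I expect the extraction to isolate the $X_{j}$-summand as a distinguished subquotient of $H^{2\ast}(Y_{j},\R)$, obtained by quotienting out the ideal generated by the pullback $\pi^{\ast}H^{\geq 1}(Y,\R)$ together with the hyperplane class $h$ of the first exceptional divisor, and then reading off the base cohomology of the projective bundle structure contributed by the second blowup. Verifying that no accidental algebra isomorphism can be produced by the unknown cohomology of the ambient $Y$ is the principal obstacle; it should rely on the simple-connectedness of $X_{j}$ (already essential in Theorem \ref{thm0}), on degree-counting peculiar to the bound $n\geq 10$, and on the explicit ring structure of the iterated blowup.
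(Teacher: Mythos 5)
Your construction is the same as the paper's: blow up a point of $Y$, embed the $4$-fold $X$ from Theorem \ref{thm0} into the exceptional divisor $\CP^{n-1}$ via $\CP^9$ (this is the only place $n\geq 10$ is used), blow up again, and choose $\sigma$ fixing the field of definition of $Y$ while acting nontrivially on the cyclotomic field over which $X$ is defined. That part is fine. The problem is that the step you yourself flag as ``the principal obstacle'' is the entire content of the proof, and the ingredients you propose to resolve it with (simple-connectedness of $X_j$, degree-counting from $n\geq 10$, the explicit ring structure) are not the ones that make it work.

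The concrete missing idea is a quantitative one: you must use the remaining degree of freedom in Theorem \ref{thm0} to choose the center $X=T_{g,4}$ with $b_2(X) > b_4(Y)+4$ (this is why the paper constructs a whole family of surfaces $Y_g$ with $b_2(Y_g)\to\infty$ rather than a single example). Your proposed extraction --- quotient by the ideal generated by $\pi^\ast H^{\geq 1}(Y,\R)$ and $h$ --- is not available to an abstract graded-algebra isomorphism $\gamma$, because nothing forces $\gamma$ to carry that ideal to the corresponding ideal on the conjugate side; the subspaces $H^2(Y,\R)$, $\R\cdot[H]$ and $\R\cdot[D]$ must first be characterized intrinsically in terms of the ring structure. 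The inequality $b_2(X)>b_4(Y)+4$ is exactly what permits this: it guarantees that the primitive part of $H^2(X,\R)$, pushed into $H^4(W,\R)$, has dimension exceeding $b_4(W)/2$, so that $H^2(Y,\R)\oplus \R\cdot[H]$ is recovered as the set of degree-two classes whose cup-product map on $H^4(W,\R)$ has kernel of dimension greater than $b_4(W)/2$. From there one pins down $\gamma([D])$ and $\gamma([H])$ up to scalars and transports $\gamma$ to an algebra isomorphism $H^{2\ast}(X,\R)\cong H^{2\ast}(X^\sigma,\R)$, contradicting Theorem \ref{thm0}. Without controlling $b_2(X)$ relative to $b_4(Y)$ --- which cannot be done for a fixed $X$ and arbitrary $Y$ --- the ``accidental isomorphism'' you worry about cannot be excluded, and your argument does not close.
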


Theorem \ref{thm1} implies immediately:

\begin{corollary} \label{cor:pi1}
Let $G$ be the fundamental group of a smooth complex projective variety.
Then there exist conjugate smooth complex projective varieties with fundamental group $G$, but non-isomorphic even-degree real cohomology algebras.
\end{corollary}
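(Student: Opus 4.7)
The plan is to reduce Corollary \ref{cor:pi1} directly to Theorem \ref{thm1}; the two statements differ only in whether one prescribes a birational class or a fundamental group, and birational invariance of $\pi_1$ bridges the two.

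First, by hypothesis there exists a smooth complex projective variety $Y$ with $\pi_1(Y)\cong G$. Since $\mathbb{P}^N$ is simply connected and fundamental groups of products split as products, $\pi_1(Y\times\mathbb{P}^N)\cong G$ for every $N\geq 0$. Choose $N$ large enough that $\dim(Y\times\mathbb{P}^N)\geq 10$; this places us in the dimension range where Theorem \ref{thm1} applies.

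Next, apply Theorem \ref{thm1} to the birational equivalence class of $Y\times\mathbb{P}^N$: this yields conjugate smooth complex projective varieties $X$ and $X^\sigma$, both birational to $Y\times\mathbb{P}^N$, whose even-degree real cohomology algebras $H^{2\ast}(X,\R)$ and $H^{2\ast}(X^\sigma,\R)$ are non-isomorphic. The key standard fact we invoke at the end is that the fundamental group is a birational invariant among smooth complex projective varieties (it is even invariant under blow-ups along smooth centers, and Hironaka--Abhyankar weak factorization lets one compare any two smooth projective models). Consequently $\pi_1(X)\cong\pi_1(X^\sigma)\cong\pi_1(Y\times\mathbb{P}^N)\cong G$, which proves the corollary.

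There is essentially no obstacle in this deduction: the entire content sits in Theorem \ref{thm1}, and the reduction consists only of inflating the dimension by crossing with a projective space and invoking birational invariance of the fundamental group. In particular, one does not need to worry about $\pi_1(X)$ itself being conjugation-invariant (it is not, by Serre's classical examples), because both $X$ and $X^\sigma$ lie in the same smooth birational class as $Y\times\mathbb{P}^N$ and therefore inherit its fundamental group.
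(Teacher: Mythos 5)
Your proposal is correct and is exactly the argument the paper intends: the paper derives Corollary \ref{cor:pi1} "immediately" from Theorem \ref{thm1} by the same reduction, namely crossing a variety with fundamental group $G$ by a projective space to reach dimension $\geq 10$ and invoking the birational invariance of $\pi_1$ for smooth projective varieties. No gaps.
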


In Theorem \ref{thm1'} in Section \ref{sec:deformations} we show that the examples in Theorem \ref{thm1} can be chosen to have non-isotrivial deformations.
This is in contrast to the observation that the previously known non-homeomorphic conjugate varieties tend to be rather rigid, cf.\ Remark \ref{rem:deformations}.

\subsection{Constructions and methods of proof.}
Using products of special surfaces with projective space, we will prove Theorem \ref{thm:H^{p,p}} in Section \ref{sec:H^{p,p}}.
The key idea is to construct real curves in the moduli space of abelian surfaces, respectively Kummer K3 surfaces, on which $\dim(H^{1,1}(-,K))$ is constant.
Using elementary facts about modular forms, we then prove that each of our curves contains a transcendental point, i.e.\ a point whose coordinates are algebraically independent over $\Q$.
The action of $\Aut(\C)$ being transitive on the transcendental points of our moduli spaces, Theorem \ref{thm:H^{p,p}} follows as soon as we have seen that our assumptions on $K$ ensure the existence of two real curves as above on which $\dim(H^{p,p}(-,K))$ takes different (constant) values. 

For the proof of Theorem \ref{thm0} in Section \ref{sec:thm0'} we use Charles--Voisin's method \cite{charles,voisin:inv}, see Section \ref{sec:charles}.
We start with simply connected surfaces $Y\subseteq \CP^N$ with special automorphisms, constructed in Section \ref{sec:surfaces}.
Then we blow-up five smooth subvarieties of $Y\times Y\times \CP^N$, e.g.\ the graphs of automorphisms of $Y$.
In order to keep the dimensions low, we then pass to a complete intersection subvariety $T$ of this blow-up.
If $\dim(T)\geq 4$, then the cohomology of $T$ encodes the action of the automorphisms on $H^2(Y,\R)$ and $H^{1,1}(Y,\C)$.
The latter can change under the $\Aut(\C)$-action, which will be the key ingredient in our proofs.

In order to prove Theorem \ref{thm1} in Section \ref{sec:thm2}, we start with a smooth complex projective variety $Z$ of dimension $\geq 10$, representing a given birational equivalence class.
From our previous results, we will be able to pick a four-dimensional variety $T$ and an automorphism $\sigma$ of $\C$ with $Z\cong Z^\sigma$, such that $T$ and $T^\sigma$ have non-isomorphic even-degree real cohomology algebras.
Since $T$ is four-dimensional, we can embed it into the exceptional divisor of the blow-up $\hat{Z}$ of $Z$ in a point and define $W= Bl_T (\hat{Z})$.
Then, $W^\sigma=Bl_{T^\sigma}(\hat Z^\sigma)$ is birational to $Z^\sigma\cong Z$.
Moreover, we will be able to arrange that $b_2(T)$ is larger than $b_{4}( Z)+4$. 
This will allow us to show that any isomorphism between $H^{2\ast}(W,\R)$ and $H^{2\ast}(W^\sigma,\R)$ induces an isomorphism between $H^{2\ast}(T,\R)$ and $H^{2\ast}(T^\sigma,\R)$.
Theorem \ref{thm1} will follow.

\subsection{Conventions.}
Using Serre's GAGA principle \cite{GAGA}, we usually identify a smooth complex projective variety $X$ with its corresponding analytic space, which is a Kähler manifold.
For a codimension $p$ subvariety $V$ in $X$, we denote the corresponding $(p,p)$-class in $H^\ast(X,\Z)$ by $[V]$.

\section{Preliminaries}
\subsection{Cohomology of blow-ups} \label{subsec:blow-ups}
In this subsection we recall important properties about the cohomology of blow-ups, which we will use (tacitly) throughout Sections \ref{sec:charles}, \ref{sec:thm0'} and \ref{sec:thm2}.
Let $Y\subseteq X$ be Kähler manifolds and let $\tilde X=Bl_Y(X)$ be the blow-up of $X$ in $Y$ with exceptional divisor $D\subseteq \tilde X$.
We then obtain a commutative diagram
\[
\xymatrix{
D \ar[d]^{p} \ar[r]^j &\tilde X\ar[d]^\pi\\
Y \ar[r]^{i} &X ,}
\]
where $i$ denotes the inclusion of $Y$ into $X$ and $j$ denotes the inclusion of the exceptional divisor $D$ into $\tilde X$.
Let $r$ denote the codimension of $Y$ in $X$, then we have the following, see \cite[p.\ 180]{voisin1}.

\begin{lemma} \label{lem:blow-up}
There is an isomorphism of integral Hodge structures
\[
H^k(X,\Z)\oplus \left(\bigoplus _{i=0}^{r-2} H^{k-2i-2}(Y,\Z)\right)\stackrel{\sim}\longrightarrow H^k\left(\tilde X,\Z\right) ,
\]
where on $H^{k-2i-2}(Y,\Z)$, the natural Hodge structure is shifted by $(i+1,i+1)$.
On $H^k(X,\Z)$, the above morphism is given by $\pi^\ast$.
On $H^{k-2i-2}(Y,\Z)$ it is given by $j_\ast\circ h^{i}\circ p^\ast$, where $h$ denotes the cup product with $c_1(\mathcal O_D(1))\in H^2(D,\Z)$ and $j_\ast$ is the Gysin morphism of the inclusion $j:D\hookrightarrow  \tilde X$.
\end{lemma}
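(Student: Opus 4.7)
The plan is to verify that the map
\[
\Phi\colon H^k(X,\Z) \oplus \bigoplus_{i=0}^{r-2} H^{k-2i-2}(Y,\Z) \longrightarrow H^k(\tilde X, \Z),
\]
defined by $(\alpha, \beta_0, \ldots, \beta_{r-2}) \mapsto \pi^*\alpha + \sum_{i=0}^{r-2} j_* h^i p^* \beta_i$, is a morphism of integral Hodge structures and an isomorphism. The key ingredients will be the projective bundle formula applied to $D = \CP(N_{Y/X})\to Y$, the projection formulas for $\pi$ and $p$, and a comparison of the Gysin long exact sequences of the pairs $(\tilde X, \tilde X\setminus D)$ and $(X, X\setminus Y)$.

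That $\Phi$ respects Hodge structures is immediate: $\pi^*$ and $p^*$ preserve Hodge types, $j_*$ shifts by $(1,1)$ because $j$ has complex codimension one, and cup product with $h = c_1(\mathcal O_D(1)) \in H^{1,1}(D)$ shifts by $(1,1)$, so each summand $j_* h^i p^*$ shifts Hodge type by $(i+1,i+1)$ as claimed.

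For injectivity I would apply $\pi_*$. Since $\pi$ is a birational morphism between compact K\"ahler manifolds of equal dimension, $\pi_*\pi^* = \id$. Combining $\pi_* j_* = i_* p_*$ with the vanishing $p_*(h^i) = 0$ for $i<r-1$ furnished by the projective bundle formula, one finds $\pi_*\Phi(\alpha, \beta_0, \ldots, \beta_{r-2}) = \alpha$, which forces $\alpha=0$ whenever $\Phi$ vanishes. The residual relation $\sum_i j_* h^i p^* \beta_i = 0$ can then be detected by $j^*$: the self-intersection identity $j^* j_* = (-h)\cup(-)$ on $H^*(D)$ (a consequence of $N_{D/\tilde X} \cong \mathcal O_D(-1)$) converts it into $\sum_i h^{i+1} p^* \beta_i = 0$ in $H^*(D)$, and the projective bundle decomposition $H^*(D,\Z) = \bigoplus_{i=0}^{r-1} h^i p^* H^*(Y,\Z)$ forces every $\beta_i$ to vanish since the powers $h^1, \ldots, h^{r-1}$ span independent $p^*H^*(Y,\Z)$-summands.

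For surjectivity I would exploit that $\pi$ restricts to an isomorphism $\tilde X\setminus D \cong X\setminus Y$ and compare the Gysin sequences of $(\tilde X, \tilde X\setminus D)$ and $(X, X\setminus Y)$: via the Thom isomorphisms $H^k(\tilde X, \tilde X\setminus D) \cong H^{k-2}(D,\Z)$ and $H^k(X, X\setminus Y) \cong H^{k-2r}(Y,\Z)$, the cokernel of $\pi^*$ on $H^k(\tilde X,\Z)$ is identified with the cokernel of the Gysin map $H^{k-2r}(Y,\Z) \hookrightarrow H^{k-2}(D,\Z)$, i.e.\ via the projective bundle formula with $\bigoplus_{i=0}^{r-2} h^i p^* H^{k-2i-2}(Y,\Z)$, exactly matching the summand in the statement. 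The main obstacle is the careful sign and indexing bookkeeping---notably the self-intersection sign $j^*j_* = -h\cup(-)$ and the precise alignment of the two Gysin sequences---needed to pin down the correct range $i = 0, \ldots, r-2$ and to obtain the isomorphism at the integral level rather than only rationally.
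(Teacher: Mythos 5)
The paper does not actually prove this lemma --- it is quoted directly from Voisin's book (\cite[p.~180]{voisin1}) --- so there is no in-paper argument to compare against; your proposal reconstructs the standard textbook proof, and its architecture is correct. The Hodge-type bookkeeping, the injectivity argument via $\pi_*\pi^*=\id$, $\pi_*j_*=i_*p_*$, $p_*(h^i)=0$ for $i<r-1$, and then $j^*j_*=(-h)\cup(-)$ combined with the projective bundle decomposition of $H^*(D,\Z)$, are all exactly right. The one substantive ingredient you assert rather than establish is hidden in the surjectivity step: identifying the cokernel of the comparison map $H^{k-2r}(Y,\Z)\to H^{k-2}(D,\Z)$ (the left-hand vertical arrow between the two Gysin sequences, read through the Thom isomorphisms) with $\bigoplus_{i=0}^{r-2}h^i p^* H^{k-2i-2}(Y,\Z)$ requires knowing that this map equals cup product with $c_{r-1}\bigl(p^*N_{Y/X}/\mathcal O_D(-1)\bigr)$ followed by $p^*$, i.e.\ the ``key formula'' $j^*\pi^* i_*(\beta)=c_{r-1}(E)\cup p^*\beta$; its leading term $h^{r-1}p^*\beta$ is what makes the image a complement of the displayed summand and, equivalently, what lets you rewrite $j_*(h^{r-1}p^*\beta)$ in terms of $\pi^*$ of a class on $X$ and the lower powers of $h$. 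This is not just sign bookkeeping --- without it the diagram chase only yields $H^k(\tilde X,\Z)=\pi^*H^k(X,\Z)+j_*H^{k-2}(D,\Z)$ with $i$ running up to $r-1$, and a rank count cannot substitute for it at the integral level. With that formula supplied (it is the content of the lemma preceding this statement in Voisin), your proof is complete.
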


By the above lemma, each cohomology class of $\tilde X$ is a sum of pullback classes from $X$ and push forward classes from $D$.
The ring structure on $H^\ast (\tilde X,\Z)$ is therefore uncovered by the following lemma.

\begin{lemma} \label{lem:mult}
Let $\alpha,\beta\in H^\ast(D,\Z)$ and $\eta \in H^\ast(X,\Z)$. 
Then, 
\[
\pi^\ast(\eta)\cup j_\ast(\alpha)=j_\ast(p^\ast (i ^\ast \eta)\cup \alpha)
\ \ \ \text{and}\ \ \ j_\ast (\alpha) \cup j_\ast(\beta) =-j_\ast(h\cup\alpha\cup \beta) ,
\]
where $h=c_1(\mathcal O_D(1))\in H^2(D,\Z)$.
\end{lemma}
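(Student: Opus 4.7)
Both identities are instances of the projection formula for the closed embedding $j\colon D\hookrightarrow \tilde X$, combined with two standard computations.

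For the first identity, my plan is to apply the projection formula
\[
j_\ast(\alpha)\cup\gamma=j_\ast(\alpha\cup j^\ast\gamma)
\]
with $\gamma=\pi^\ast\eta$. Since the blow-up diagram commutes, $\pi\circ j=i\circ p$, and therefore $j^\ast\circ \pi^\ast=p^\ast\circ i^\ast$. Substituting this in gives exactly $\pi^\ast(\eta)\cup j_\ast(\alpha)=j_\ast(p^\ast(i^\ast\eta)\cup\alpha)$.

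For the second identity, I would apply the projection formula again, now with $\gamma=j_\ast(\beta)$, obtaining
\[
j_\ast(\alpha)\cup j_\ast(\beta)=j_\ast\bigl(\alpha\cup j^\ast j_\ast(\beta)\bigr).
\]
The self-intersection formula identifies $j^\ast j_\ast(\beta)$ with $c_{\mathrm{top}}(N_{D/\tilde X})\cup\beta$; since $D$ is a divisor, this top Chern class is simply $c_1(N_{D/\tilde X})$. The key geometric input is the standard identification of the normal bundle of the exceptional divisor of a blow-up, namely $N_{D/\tilde X}\cong\mathcal O_D(D)\cong\mathcal O_D(-1)$, so that $c_1(N_{D/\tilde X})=-h$. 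Plugging this in yields $j_\ast(\alpha)\cup j_\ast(\beta)=-j_\ast(h\cup\alpha\cup\beta)$, as claimed.

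The genuinely nontrivial ingredient is the identification of the normal bundle of $D$ as $\mathcal O_D(-1)$; everything else is formal manipulation with the projection and self-intersection formulas, both of which are standard for closed embeddings of K\"ahler manifolds (or more generally for proper smooth morphisms). I do not expect any real obstacle here, since the content of the lemma is classical and the only subtle sign comes precisely from the $-1$ in the normal bundle computation.
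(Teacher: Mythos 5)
Your proof is correct and follows essentially the same route as the paper: both identities rest on the projection formula together with the identification $\mathcal O_{\tilde X}(D)|_D\cong\mathcal O_D(-1)$, hence $j^\ast j_\ast(1)=-h$. The only cosmetic difference is that you invoke the self-intersection formula $j^\ast j_\ast(\beta)=c_1(N_{D/\tilde X})\cup\beta$ directly, whereas the paper first reduces to $j_\ast(\alpha)\cup j_\ast(\beta)=j_\ast(1)\cup j_\ast(\alpha\cup\beta)$ and then applies the same normal-bundle computation to $j_\ast(1)$.
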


\begin{proof}
Using $i\circ p=\pi\circ j$, the first assertion follows immediately from the projection formula. 
For the second assertion, one first proves
\begin{equation} \label{eq1:mult}
j_\ast (\alpha) \cup j_\ast(\beta) =j_\ast (1) \cup j_\ast(\alpha\cup \beta) 
\end{equation}
by realizing that the dual statement in homology holds.
Next, note that $j_\ast(1)=c_1(\mathcal O_{\tilde X}(D))$.
Moreover, the restriction of $\mathcal O_{\tilde X}(D)$ to $D$ is isomorphic to $\mathcal O_D(-1)$.
This implies $-h=j^\ast (j_\ast (1))$ and so the projection formula yields:
\[
-j_\ast(h\cup\alpha\cup \beta)=j_\ast(1)\cup j_\ast(\alpha\cup \beta) .
\]
This concludes the proof by (\ref{eq1:mult}).
\end{proof}

\subsection{Eigenvalues of conjugate endomorphisms}
Let $X$ be a smooth complex projective variety with endomorphism $f$ and let $\sigma$ be an automorphism of $\C$.
Via base change, $f$ induces an endomorphism $f^\sigma$ of $X^\sigma$.
If an explicit embedding of $X$ into some projective space $\CP^N$ with homogeneous coordinates $z=[z_0:\ldots :z_N]$ is given, then $f^\sigma$ is determined by
\[
f^\sigma(\sigma(z)))=\sigma(f(z)) 
\] 
for all $z\in X$, where $\sigma$ acts on each homogeneous coordinate simultaneously.
On cohomology, we obtain linear maps
\[
f^\ast:H^{p,q}(X)\longrightarrow H^{p,q}(X)\ \ \text{and}\ \ (f^\sigma)^\ast:H^{p,q}(X^\sigma)\longrightarrow H^{p,q}(X^\sigma) .
\]
These maps commute with the $\sigma$-linear isomorphism 
\[
H^{p,q}(X)\stackrel{\sim}\longrightarrow H^{p,q}(X^\sigma) 
\]
induced by (\ref{eq:sigmaliniso}).
This observation proves:

\begin{lemma} \label{lem:eigenvalues}
The set of eigenvalues of $(f^\sigma)^\ast$ on $H^{p,q}(X^\sigma)$ is given by the $\sigma$-conjugate of the set of eigenvalues of $f^\ast$ on $H^{p,q}(X)$.
\end{lemma}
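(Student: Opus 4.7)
The plan is to derive the lemma directly from the $\sigma$-linear compatibility that the author highlights just before the statement. First I would make precise that the isomorphism
\[
\phi : H^{p,q}(X) \stackrel{\sim}\longrightarrow H^{p,q}(X^\sigma)
\]
coming from \eqref{eq:sigmaliniso} intertwines $f^\ast$ and $(f^\sigma)^\ast$, i.e.\ $(f^\sigma)^\ast \circ \phi = \phi \circ f^\ast$. This is pure functoriality: the $\sigma$-linear isomorphism is built out of the algebraic de Rham cohomology isomorphism $H^\ast_{\mathrm{dR}}(X/\C) \otimes_\sigma \C \cong H^\ast_{\mathrm{dR}}(X^\sigma/\C)$ of \cite{charles-schnell}, which is natural with respect to morphisms of $\C$-schemes after base change. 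Since $f^\sigma$ is by construction the base change of $f$ along $\sigma^\ast : \Spec(\C) \to \Spec(\C)$, naturality applied to $f$ gives exactly the required commutative square, with the vertical arrows being $f^\ast$ on the left and $(f^\sigma)^\ast$ on the right.

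Second, I would invoke the following elementary linear-algebra fact: if $\phi : V \to W$ is a $\sigma$-linear bijection of finite-dimensional $\C$-vector spaces and $A \in \End_\C(V)$, $B \in \End_\C(W)$ satisfy $B \circ \phi = \phi \circ A$, then the eigenvalue sets satisfy $\sigma(\Eig(A)) = \Eig(B)$. Indeed, for any eigenvector $A v = \lambda v$ with $v \neq 0$ one computes $B(\phi(v)) = \phi(A v) = \phi(\lambda v) = \sigma(\lambda)\, \phi(v)$, and $\phi(v)\neq 0$ since $\phi$ is a bijection; thus $\sigma(\lambda) \in \Eig(B)$. The reverse inclusion follows by applying the same argument to $\phi^{-1}$, which is $\sigma^{-1}$-linear and satisfies $A \circ \phi^{-1} = \phi^{-1} \circ B$.

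Combining the two steps with $V = H^{p,q}(X)$, $W = H^{p,q}(X^\sigma)$, $A = f^\ast$, $B = (f^\sigma)^\ast$ yields the claim. There is no real obstacle; the only point worth treading carefully is the functoriality assertion in the first paragraph, and that is a built-in property of pullback on algebraic de Rham cohomology under base change.
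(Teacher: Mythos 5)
Your proposal is correct and is essentially the paper's own argument: the paper likewise observes that $f^\ast$ and $(f^\sigma)^\ast$ commute with the $\sigma$-linear isomorphism induced by (\ref{eq:sigmaliniso}) and deduces the eigenvalue statement from that. You merely spell out the two ingredients (functoriality of the de Rham comparison under base change, and the elementary fact about $\sigma$-linear conjugation of endomorphisms) that the paper leaves implicit.
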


\subsection{The $j$-invariant of elliptic curves} \label{subsec:j-inv}
Recall that the $j$-invariant of an elliptic curve $E$ with affine Weierstrass equation 
$
y^2=4x^3-g_2x-g_3 
$
equals 
\[
j(E)=\frac{g_2^3}{g_2^3-27g_3^2} .
\]
Two elliptic curves are isomorphic if and only if their $j$-invariant coincide.
From the above formula, we deduce $j(E^\sigma)=\sigma(j(E))$ for all $\sigma \in \Aut(\C)$.
For an element $\tau$ in the upper half plane $\HH$, we use the notation
\begin{align} \label{def:E_tau}
E_\tau:= \C/(\Z+\tau\Z)\ \ \text{and}\ \ j(\tau):=j(E_\tau) .
\end{align}
Then, $j$ induces an isomorphism between any fundamental domain of the action of the modular group $SL_2(\Z)$ on $\HH$ and $\C$. 
Moreover, $j$ is holomorphic on $\HH$ with a cusp of order one at $i\cdot \infty$.

\subsection{Kummer K3 surfaces and theta constants} \label{subsec:K3}
Let $M \in M_{2}(\C)$ be a symmetric matrix whose imaginary part is positive definite.
Then,
\[
A_M :=\C^2/(\Z^2+M \Z^2)
\]
is a principal polarized abelian surface.
The associated Kummer K3 surface $K3(A_M)$ is the quotient of the blow-up of $A_M$ at its 16 2-torsion points by the involution $\cdot(-1)$. 
Equivalently, $K3(A_M)$ is the blow-up of $A_M/(-1)$ at its 16 singular points.

Let $L_M$ be a symmetric line bundle on $A_M$ which induces the principal polarization on $A_M$.
The linear series $|L_M^{\otimes 2}|$ then defines a morphism $A_M\longrightarrow \CP^3$. 
This morphism induces an isomorphism of $A_M/(-1)$ with a degree four hypersurface 
\[
\left\{F_M=0\right\}\subseteq \CP^3 .
\]
The coefficients of $F_M$ are given by homogeneous degree $12$ expressions in the coordinates of Riemann's second order theta constant $\Theta_2 (M)\in \CP^3$, see \cite{geemen} and also \cite[Example 1.1]{ren}.
This constant is defined as
\begin{align} \label{def:thetaconstant}
\Theta_2(M):=[\Theta_2[0,0](M):\Theta_2[1,0](M):\Theta_2[0,1](M):\Theta_2[1,1](M)] .
\end{align}
Here, for $\delta\in \left\{0,1 \right\}^2$, the complex number $\Theta_2[\delta](M)$ denotes the Fourier series
\begin{align} \label{def:thetaseries}
\Theta_2[\delta](M):=\sum_{n\in\Z^2} e^{2\pi i\cdot Q_M(n+\delta/2)} ,
\end{align}
where $Q_M(z)$ is the quadratic form $z^tMz$, associated to $M$.

The above discussion allows us to calculate conjugates of $K3(A_M)$ explicitly.
\begin{lemma}  \label{lem:K3^sigma}
If $\sigma(\Theta_2(M))=\Theta_2(M')$ holds for some automorphism $\sigma\in \Aut(\C)$, then
\begin{align*}
K3(A_{M})^\sigma\cong K3(A_{M'}) .
\end{align*}
\end{lemma}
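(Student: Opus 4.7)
The plan is to transport the description of $K3(A_M)$ as a blow-up of a quartic surface in $\mathbb{P}^3$ through the $\sigma$-action, using the fact that the defining equation $F_M$ depends on $M$ only through $\Theta_2(M)$ in a polynomial way.

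First I would record the following observation: the coefficients of $F_M\in H^0(\mathbb{P}^3,\mathcal{O}(4))$ are obtained by evaluating a fixed collection of polynomials $P_0,\dots,P_r\in \mathbb{Z}[x_0,x_1,x_2,x_3]$ (homogeneous of degree $12$) on a representative of the projective point $\Theta_2(M)\in \mathbb{P}^3$. These polynomials do not depend on $M$; they come from the universal description of the Kummer quartic via second order theta constants recalled in Section~\ref{subsec:K3}. Consequently, if we pick representatives so that $\sigma(\Theta_2(M))=\Theta_2(M')$ as points in $\mathbb{C}^4$, then applying $\sigma$ to each coefficient of $F_M$ gives $P_i(\sigma(\Theta_2(M)))=P_i(\Theta_2(M'))$, i.e.\ the coefficients of $F_{M'}$. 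Homogeneity of the $P_i$ ensures that the resulting equality of quartics in $\mathbb{P}^3$ is independent of the chosen affine lifts of $\Theta_2(M)$ and $\Theta_2(M')$.

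Second, I would use that taking $\sigma$-conjugates of a projective variety defined by explicit equations corresponds to applying $\sigma$ to the coefficients of those equations. Together with the previous step, this yields an isomorphism of complex projective varieties
\[
\bigl(A_M/(-1)\bigr)^\sigma \;\cong\; \{F_M^\sigma=0\}\;=\;\{F_{M'}=0\}\;\cong\; A_{M'}/(-1)
\]
as hypersurfaces in $\mathbb{P}^3$.

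Finally, $K3(A_M)$ is by definition the blow-up of $A_M/(-1)$ at its $16$ singular points; the singular locus is intrinsic to the scheme, and blowing up commutes with base change along $\sigma^\ast:\Spec(\C)\to\Spec(\C)$. Hence
\[
K3(A_M)^\sigma \;\cong\; Bl_{\mathrm{Sing}((A_M/(-1))^\sigma)}\bigl((A_M/(-1))^\sigma\bigr) \;\cong\; Bl_{\mathrm{Sing}(A_{M'}/(-1))}\bigl(A_{M'}/(-1)\bigr) \;=\; K3(A_{M'}),
\]
which is the desired conclusion.

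The only step that requires genuine care, rather than formal manipulation, is the first one: one must be sure that the assignment $M\mapsto F_M$ really is of the form ``evaluate a fixed system of homogeneous degree $12$ polynomials on $\Theta_2(M)$'' so that it is $\sigma$-equivariant in the stated sense. This is precisely the content of the classical description recalled from \cite{geemen} (see also \cite[Example 1.1]{ren}), and I would simply cite it rather than reprove it.
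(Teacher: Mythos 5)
Your proof is correct and follows essentially the same route as the paper: use that the coefficients of $F_M$ are fixed rational (indeed integral) homogeneous polynomials in the coordinates of $\Theta_2(M)$, so that $\sigma$ carries $\{F_M=0\}$ to $\{F_{M'}=0\}$, and then pass to the smooth model by blowing up the $16$ singular points, which conjugation respects. Your added remarks on homogeneity (independence of the affine lift) and on the intrinsic nature of the singular locus make explicit points the paper leaves tacit, but the argument is the same.
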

\begin{proof}
As mentioned above, the coefficients of $F_M$ and $F_{M'}$ are polynomial expressions in the coordinates of $\Theta_2(M)$ and $\Theta_2(M')$ respectively.
The action of $\sigma$ therefore maps the polynomial $F_M$ to $F_{M'}$ and hence $\left\{F_M=0\right\}$ to $\left\{F_{M'}=0\right\}$.
Moreover, this action maps the 16 singular points of $\left\{F_M=0\right\}$ to the 16 singular points of $\left\{F_{M'}=0\right\}$.
The lemma follows from the above description of $K3(A_{M})$ and $K3(A_{M'})$ as smooth models of $\left\{F_M=0\right\}$ and $\left\{F_{M'}=0\right\}$ respectively.
\end{proof}

\begin{remark} \label{rem:whyK3(A)?}
The linear series $|L_M^{\otimes 3}|$ defines an embedding of $A_M$ into $\CP^8$.
It is in principle possible to use this embedding in order to calculate conjugates $A_M^\sigma$ of $A_M$.
In the preceding section we only presented the analogous (easier) calculation for the associated Kummer K3 surface which will suffice for our purposes. 
\end{remark}

\section{Proof of Theorem \ref{thm:H^{p,p}}} \label{sec:H^{p,p}}
In this section, we prove Theorem \ref{thm:H^{p,p}} from the introduction. 
For this purpose, let us fix a subfield $K\subseteq \C$, different from $\R$ and $\C$, which is not contained in an imaginary quadratic extension of $\Q$.
We then need to construct for any $p\geq 1$ and in any dimension $n\geq p+1$ conjugate smooth complex projective varieties $X$, $X^\sigma$ with $H^{p,p}(X,K)\ncong H^{p,p}(X^\sigma,K)$.
After taking products with $\CP^{n-2}$, it clearly suffices to settle the case $p=1$ and $n=2$.

We denote by $K_\R:=K\cap \R$ the real subfield of $K$.
The proof of Theorem \ref{thm:H^{p,p}} for $p=1$ and $n=2$ is now divided into four different cases.
Cases 1 and 2 deal with $K_\R\neq\Q$; in Cases 3 and 4 we settle $K_\R=\Q$.

In Cases 1--3 we will consider for $\tau\in \HH$ the elliptic curve $E_\tau$ with associated $j$-invariant $j(\tau)$ from (\ref{def:E_tau}), and use the following 

\begin{lemma} \label{lem:thm:H^{p,p}}
Let $L\subset \C$ be a subfield.
Then we have for any $a,b \in \R_{>0}$, 
\[
\dim(H^{1,1}(E_{ia}\times E_{ib},L)) =
\begin{cases}
2,\ \text{if $a/b \notin L$ and $a\cdot b \notin L$,} \\
3,\ \text{if $a/b \in L$ and $a\cdot b \notin L$, or  if $a/b \notin L$ and $a\cdot b \in L$,} \\
4,\ \text{if $a/b \in L$ and $a\cdot b \in L$.} \\
\end{cases}
\]
\end{lemma}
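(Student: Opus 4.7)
The plan is to reduce the statement to a short linear-algebra calculation after fixing an explicit basis of $H^2(E_{ia}\times E_{ib},\Z)$. I would write $E_{ia}=\R^2/\Z^2$ with complex coordinate $z=x_1+iax_2$, so that $dz$ spans $H^{1,0}(E_{ia})$ while $dx_1,dx_2$ span $H^1(E_{ia},\Z)$, and analogously for $E_{ib}$ with $w=y_1+iby_2$. Among the six K\"unneth basis elements of $H^2$, the two ``pure'' classes $dx_1\wedge dx_2$ and $dy_1\wedge dy_2$ are automatically of type $(1,1)$ and contribute $2$ to $\dim_L H^{1,1}(-,L)$ independently of $a,b,L$. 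The four ``mixed'' classes $\omega_3=dx_1\wedge dy_1$, $\omega_4=dx_1\wedge dy_2$, $\omega_5=dx_2\wedge dy_1$, $\omega_6=dx_2\wedge dy_2$ carry nonzero $(2,0)$- and $(0,2)$-components, which I would expand via $dx_1=(dz+d\bar z)/2$, $dx_2=(dz-d\bar z)/(2ia)$ and the analogous formulas for the $y_j$.

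Next I would impose the vanishing of both the $(2,0)$- and the $(0,2)$-component of a general $L$-rational combination $\sum_{j=3}^{6}c_j\omega_j$. Both conditions are genuinely needed because $L$ is an arbitrary subfield, not necessarily real, so I cannot appeal to complex conjugation to get one for free. After clearing denominators, I obtain the two $\C$-linear relations
\[
ab\,c_3-ia\,c_4-ib\,c_5-c_6=0 \quad\text{and}\quad ab\,c_3+ia\,c_4+ib\,c_5-c_6=0,
\]
which by addition and subtraction are equivalent to the pair $c_6=ab\cdot c_3$ and $c_5=-(a/b)\cdot c_4$.

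Finally, since $c_3$ and $c_4$ range freely in $L$, the constraint $c_6\in L$ is automatic when $ab\in L$ and forces $c_3=0$ (hence $c_6=0$) otherwise; analogously $c_5\in L$ is automatic if $a/b\in L$ and forces $c_4=0$ otherwise. Adding the $2$ free dimensions coming from the pure classes $dx_1\wedge dx_2$ and $dy_1\wedge dy_2$ yields exactly the three values in the statement. The whole argument is elementary bookkeeping of factors of $i$, $a$ and $b$; the only mild subtlety is remembering to enforce both the $(2,0)$- and the $(0,2)$-vanishing separately when $L$ is not real.
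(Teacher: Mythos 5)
Your proposal is correct and follows essentially the same route as the paper: both fix the integral basis $dx_1,dx_2,dy_1,dy_2$ with $dz=dx_1+ia\,dx_2$, $dw=dy_1+ib\,dy_2$, and reduce to the observation that the two mixed $(1,1)$-directions are spanned by $dx_1\wedge dy_1+ab\,dx_2\wedge dy_2$ and $dx_1\wedge dy_2-(a/b)\,dx_2\wedge dy_1$, so that $L$-rationality of a nonzero class in each requires $ab\in L$, respectively $a/b\in L$. Your derivation of these two classes by solving the $(2,0)$- and $(0,2)$-vanishing conditions (and your remark that both are needed for non-real $L$) is just a slightly more explicit version of the paper's direct exhibition of the basis.
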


\begin{proof}
For $j=1,2$, we denote the holomorphic coordinate on the $j$-th factor of $E_{ia}\times E_{ib}$ by $z_j=x_j+iy_j$.
Then there are basis elements
\[
\alpha_1,\beta_1 \in H^1(E_{ia},\Z)\ \ \text{and}\ \ \alpha_2,\beta_2 \in H^1(E_{ib},\Z) ,
\]
such that
\[
dz_1=\alpha_1+i a\cdot \beta_1 \in H^{1,0}(E_{ia})\ \ \text{and}\ \ dz_2=\alpha_2+i b \cdot \beta_2 \in H^{1,0}(E_{ib}) .
\]
We deduce that the following four $(1,1)$-classes form a basis of $H^{1,1}(E_{ia}\times E_{ib})$:
\[
\alpha_1\cup \beta_1,\ \ \alpha_2\cup \beta_2,\ \ \alpha_1\cup \alpha_2+ab\cdot \beta_1\cup\beta_2\ \ \text{and}\ \  \alpha_1\cup\beta_2 + (a/b)\cdot \alpha_2 \cup \beta_1 . 
\]
The lemma follows.
\end{proof}

\textbf{Case 1:} $K_\R$ is uncountable.

The restriction of the $j$-invariant to $i\cdot \R_{\geq 1}$ is injective.
Since $K_\R$ is uncountable, it follows that there is some $\lambda\geq 1$ in $K_\R$ such that $j(i\lambda)$ is transcendental.

By assumptions, $K_\R$ is different from $\R$.
The additive action of $K_\R$ on $\R$ has therefore more than one orbit and so $\R_{\geq 1}\setminus K_\R$ is uncountable.
As above, it follows that there is some $\mu \in \R_{\geq 1}\setminus K_\R$ such that $j(i\mu)$ is transcendental.
Hence, there is some $\sigma\in \Aut(\C)$ with $\sigma(j(i \lambda))=j(i\mu)$.
Since $j(i)=1$, it follows from the discussion in Section \ref{subsec:j-inv} that
\[
X:= E_{i \lambda}\times E_{i }  \ \ \text{with}\ \ X^\sigma \cong E_{i \mu}\times E_{i } .
\]
Since $\lambda \in K$ and $\mu \notin K$, it follows from Lemma \ref{lem:thm:H^{p,p}} that $H^{1,1}(X,K)$ and  $H^{1,1}(X^\sigma,K)$ are not equidimensional.
This concludes Case 1.
 
\textbf{Case 2:} $K_\R$ is countable and $K_\R\neq \Q$. 

Here we will need the following lemma.

\begin{lemma} \label{lem:j-rel}
Let $\lambda\in \R_{>0}$ be irrational, and let $U\subseteq \R_{>0}$ be an uncountable subset. 
Then there is some $\mu\in U$ such that $j(\mu)$ and $j(\lambda \mu)$ are algebraically independent over $\Q$. 
\end{lemma}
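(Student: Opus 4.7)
The plan is to argue by contradiction. Suppose the conclusion fails, so that for every $\mu\in U$ there exists a non-zero polynomial $P_\mu\in\Q[x,y]$ with $P_\mu(j(i\mu),j(i\lambda\mu))=0$. (I read ``$j(\mu)$'' in the statement as $j(i\mu)$, in line with the notation used in Cases 1 and 2.) Since $\Q[x,y]\setminus\{0\}$ is countable while $U$ is uncountable, pigeonholing on $P_\mu$ produces an uncountable subset $U'\subseteq U$ and a single $P\in\Q[x,y]\setminus\{0\}$ annihilating every pair $(j(i\mu),j(i\lambda\mu))$ with $\mu\in U'$.

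The next step is to propagate this vanishing to all of $\HH$. Set $F(\tau):=P(j(\tau),j(\lambda\tau))$, which is holomorphic on $\HH$ because $\lambda>0$ forces $\lambda\tau\in\HH$. The subset $\{i\mu:\mu\in U'\}\subseteq\HH$ is uncountable, hence has an accumulation point in $\HH$, so the identity theorem gives $F\equiv 0$. Exploiting $SL_2(\Z)$-invariance of $j$, for every $\gamma\in SL_2(\Z)$ and every $\tau\in\HH$,
\[
0=F(\gamma\tau)=P(j(\tau),j(\lambda\gamma\tau)) ,
\]
so each $j(\lambda\gamma\tau)$ is a root of the one-variable polynomial $P(j(\tau),Y)\in\C[Y]$. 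For $\tau$ outside a countable set (those $\tau$ where the leading coefficient of $P(j(\tau),Y)$ does not vanish) this polynomial has degree $d:=\deg_Y P$, whence
\[
\#\bigl\{j(\lambda\gamma\tau):\gamma\in SL_2(\Z)\bigr\}\leq d .
\]

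The main obstacle, and the only place where irrationality of $\lambda$ enters, is to contradict this finiteness. Setting $\eta:=\mathrm{diag}(\lambda,1)\in GL_2^+(\R)$, the identity $\lambda\gamma\tau=(\eta\gamma\eta^{-1})(\lambda\tau)$ rewrites the above set as $\{j(\gamma'\tau'):\gamma'\in\Gamma'\}$, where $\Gamma':=\eta SL_2(\Z)\eta^{-1}$ and $\tau':=\lambda\tau$. For generic $\tau$ (so that $\tau'$ has $SL_2(\Z)$-stabilizer $\{\pm I\}$), two values $j(\gamma'\tau')$ and $j(\gamma''\tau')$ coincide iff $\gamma'(\gamma'')^{-1}\in SL_2(\Z)\cap\Gamma'$; hence the set has cardinality $[\Gamma':SL_2(\Z)\cap\Gamma']$. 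The direct matrix computation
\[
\eta\begin{pmatrix} a & b \\ c & d \end{pmatrix}\eta^{-1}=\begin{pmatrix} a & \lambda b \\ c/\lambda & d \end{pmatrix}
\]
shows that, for $\lambda$ irrational, a matrix of this shape lies in $SL_2(\Z)$ only if $b=c=0$, forcing $\gamma=\pm I$; thus $SL_2(\Z)\cap\Gamma'=\{\pm I\}$, the index is infinite, and the bound $d$ is violated. I expect this commensurator computation to be the only real subtlety; the rest is pigeonhole plus the identity principle.
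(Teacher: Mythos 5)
Your proof is correct, and its second half takes a genuinely different route from the paper's. Both arguments begin identically: pigeonhole over the countable set of rational polynomials to find a single $P$ annihilating uncountably many pairs, then use an accumulation point and the identity theorem to obtain $P(j(\tau),j(\lambda\tau))\equiv 0$ on $\HH$. From there the paper takes $P$ of minimal degree $n$ in the second variable and uses only translation invariance $j(\tau)=j(\tau+k)$: irrationality of $\lambda$ lets one divide by $j(\lambda\tau)-j(\lambda\tau+\lambda k)$ and pass to a limit along $\lambda k_m\to 0\pmod{\Z}$, producing the ``derivative'' relation $\sum_l c_l(j(\tau))\,l\,j(\lambda\tau)^{l-1}=0$ of lower degree and contradicting minimality. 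You instead invoke the full $SL_2(\Z)$-invariance: every $j(\lambda\gamma\tau)$ is a root of $P(j(\tau),Y)$, hence there are at most $\deg_Y P$ such values, while the commensurator computation $\eta\, SL_2(\Z)\,\eta^{-1}\cap SL_2(\Z)=\{\pm I\}$ for $\eta=\mathrm{diag}(\lambda,1)$ with $\lambda$ irrational makes that set of values infinite for generic $\tau$. Your route is more conceptual — it is essentially the sharp dichotomy, familiar from modular polynomials, that $j(\tau)$ and $j(\lambda\tau)$ are algebraically dependent precisely when $\lambda\in GL_2^+(\Q)$ — and it avoids both the minimality induction and the limiting argument; the paper's route is more elementary, using only periodicity and the behaviour at the cusp. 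One point to tighten: your genericity condition should ask that $\tau'$ be fixed by no nontrivial element of the group generated by $SL_2(\Z)$ and $\Gamma'$, not merely that its $SL_2(\Z)$-stabilizer be $\{\pm I\}$, since the element $(\gamma')^{-1}\delta\gamma''$ that is forced to fix $\tau'$ lies in that larger group. As that group is still countable and a nontrivial element of $PSL_2(\R)$ fixes at most one point of $\HH$, the excluded set remains countable and the argument goes through unchanged.
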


\begin{proof}
For a contradiction, suppose that $j(\mu)$ and $j(\lambda \mu)$ are algebraically dependent over $\Q$ for all $\mu\in U$.
Since the polynomial ring in two variables over $\Q$ is countable, whereas $U$ is uncountable, we may assume that $j(\mu)$ and $j(\lambda \mu)$ satisfy the same polynomial relation for all $\mu\in U$.
Any uncountable subset of $\R$ contains an accumulation point.
Hence, the identity theorem yields a polynomial relation between the holomorphic functions $j(\tau)$ and $j(\lambda \tau)$ in the variable $\tau\in \HH$.
That is,
\[
\sum_{l=0}^nc_l(j(\tau))\cdot j(\lambda \tau)^l=0 ,
\]
where $c_l(j(\tau))$ is a polynomial in $j(\tau)$ which is nontrivial for $l=n$.
We may assume that $n$ is the minimal integer such that a polynomial relation as above exists.
The modular form $j(\tau)$ does not satisfy any nontrivial polynomial relation since it has a cusp of order one at $i\infty$.
Thus, $n\geq 1$.
For $k\in \Z$, we have $j(\tau)=j(\tau+k)$ and so the above identity yields
\[
\sum_{l=0}^nc_l(j(\tau))\cdot \left(j(\lambda \tau)^l-j(\lambda \tau+\lambda k)^l\right)=0 ,
\]
for all $k\in \Z$.
Since $\lambda$ is irrational, $\lambda \tau$ and $\lambda \tau+\lambda k$ do not lie in the same $SL_2(\Z)$ orbit and so $j(\lambda\tau)-j(\lambda \tau+\lambda k) $ is nonzero for all $k\in \Z$.
Thus,
\[
\sum_{l=1}^n c_l(j(\tau))\cdot \sum_{h=0}^{l-1}j(\lambda \tau)^h j(\lambda \tau+\lambda k)^{l-1-h} =0 .
\]
If we now choose a sequence of integers $(k_m)_{m\geq 1}$ such that $\lambda k_m$ tends to zero modulo $\Z$, then the above identity tends to the identity
\[
\sum_{l=1}^n c_l(j(\tau))\cdot l\cdot j(\lambda \tau)^{l-1} =0 .
\]
This contradicts the minimality of $n$.
Lemma \ref{lem:j-rel} follows.
\end{proof}

Since $K_\R$ is countable, it follows that for any $t >0$,
\[
U_t:=\left\{\mu\in \R_{\geq 1}\ |\ t\mu^2\notin K \right\} 
\]
is uncountable.
By assumptions in Case 2, $K_\R$ contains a positive irrational number $\lambda$.
Additionally, we pick a positive irrational number $\lambda' \notin K$.

Then, by Lemma \ref{lem:j-rel}, there are elements $\mu\in U_\lambda$ and $\mu'\in U_{\lambda'}$ such that $j(i\mu)$ and $j(i\lambda\mu)$, as well as $j(i\mu')$ and $j(i\lambda'\mu')$, are algebraically independent over $\Q$.
It follows that for some $\sigma\in \Aut(\C)$, we have
\[
X:=E_{i\lambda\mu} \times E_{i\mu}\ \ \text{with}\ \ X^\sigma\cong E_{i\lambda'\mu'} \times E_{i\mu'} .
\]
Since $\lambda\in K$ and $\lambda\mu^2,\lambda',\lambda'{\mu'}^2\notin K$, it follows from Lemma \ref{lem:thm:H^{p,p}} that $H^{1,1}(X,K)$ and $H^{1,1}(X^\sigma,K)$ are not equidimensional.
This concludes Case 2.

\textbf{Case 3:} $K$ is uncountable and $K_\R=\Q$.

Since $K$ is uncountable, there are elements $\tau,\tau'\in \HH$ with $\tau,\overline{\tau'}\in K$ such that $j(\tau)$ and $j(\tau')$ are algebraically independent over $\Q$.
Also, there are positive numbers $\mu,\mu'\in \R_{>0}$ with $\mu  \mu', \mu/\mu' \notin K_\R=\Q$ such that $j(i\mu)$ and $j(i\mu')$ are algebraically independent over $\Q$.
For some $\sigma \in \Aut(\C)$, we then have
\[
X:=E_\tau\times E_{\tau'} \ \ \text{with}\ \ X^\sigma\cong E_{i\mu}\times E_{i\mu'} .
\] 
Since $\tau,\overline{\tau'}\in K$, the space $H^{1,1}(X,K)$ is at least three-dimensional.
Conversely, $H^{1,1}(X^\sigma,K)$ is two-dimensional by Lemma \ref{lem:thm:H^{p,p}}.
This concludes Case 3.

\textbf{Case 4:} $K$ is countable and $K_\R=\Q$.

This case is slightly more difficult; instead of products of elliptic curves, we will use Kummer K3 surfaces and their theta constants, see Section \ref{subsec:K3}. 
We begin with the definition of certain families of such surfaces. 
For $t=t_1+it_2\in \C$ with $t_1\neq 0$ and $\mu \in \R_{> 0}$, we consider the symmetric matrix
\[
M(\mu,t):=  i\frac{\mu}{2t_1}\cdot \left( \begin{array}{cc}
 2t_1 & 1 \\
1 & |t|^2 
\end{array} \right) .
\] 
For a suitable choice of $t\in \C$, the matrix $-iM(\mu,t)$ is positive definite for all $\mu>0$ and so the abelian surface $A_{M(\mu,t)}$ as well as its associated Kummer K3 surface exist.
For such $t$, we have the following lemma, where $\hat A$ denotes the dual of the abelian surface $A$.

\begin{lemma} \label{lem:K3(A)}
Let $L\subseteq \C$ be a subfield, let $\mu>0$ and let $t=t_1+it_2\in \C$ such that $-i\cdot M(\mu,t)$ is positive definite.
If $t_1$, $|t|^2$ and $\det(M(\mu,t))$ do not lie in $L$, then
\[
\dim(H^{1,1}(K3(\hat{A}_{M(\mu,t)}),L))=
\begin{cases}
17,\ \text{if}\ \ (|t|^2+2t_1\cdot L)\cap L =\emptyset  , \\
18,\ \text{otherwise} .
\end{cases}
\]
\end{lemma}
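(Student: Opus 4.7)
The plan is to reduce the computation to linear algebra on the abelian surface $A_M$ itself. Since $A_M$ is principally polarized, the polarization gives $\hat{A}_M\cong A_M$ and hence $K3(\hat{A}_M)\cong K3(A_M)$. The Kummer construction yields a Hodge-compatible decomposition $H^2(K3(A_M),L)=H^2(A_M,L)\oplus L^{16}$: the second summand is spanned by the sixteen exceptional divisors (all of type $(1,1)$, defined over $\Q$), and the first agrees with $H^2(A_M/\pm,L)$ because $(-1)^\ast$ acts trivially on $H^2$ of any abelian variety. In particular,
\[
\dim_L H^{1,1}(K3(\hat{A}_M),L)=\dim_L H^{1,1}(A_M,L)+16,
\]
so it suffices to prove that $\dim_L H^{1,1}(A_M,L)$ equals $1$ in the first case and $2$ in the second.

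Because $M(\mu,t)=iN$ is purely imaginary with $N=(c_{ij})$ real symmetric positive definite (entries $c_{11}=\mu$, $c_{12}=\mu/(2t_1)$, $c_{22}=\mu|t|^2/(2t_1)$), the lattice $\Z^2+M\Z^2\subseteq\R^2\oplus i\R^2$ splits as $\Z^2\oplus iN\Z^2$. In real coordinates $(x_1,y_1,x_2,y_2)$ on $\C^2$, I would verify directly that the four $J$-invariant real alternating forms
\[
\omega_1=dx_1\wedge dy_1,\ \omega_2=dx_2\wedge dy_2,\ \omega_3=dx_1\wedge dx_2+dy_1\wedge dy_2,\ \omega_4=dx_1\wedge dy_2+dx_2\wedge dy_1
\]
form an $\R$-basis of $H^{1,1}(A_M,\R)$. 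For a general $E=\sum a_i\omega_i$, I would then compute the six values $E(v,w)$ on pairs from the lattice basis $\{e_1,e_2,Me_1,Me_2\}$. Two of these evaluations read $a_3\in L$ and $a_3\det(N)=-a_3\det(M)\in L$; since $\det(M)\notin L$, they force $a_3=0$.

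The four remaining conditions split into two $2\times 2$ subsystems in $(a_1,a_4)$ and $(a_2,a_4)$, each with coefficient matrix $N$. Since $\det(N)=\mu^2(2t_1|t|^2-1)/(4t_1^2)\neq 0$, each subsystem inverts uniquely, expressing $(a_1,a_4)$ and $(a_2,a_4)$ in terms of pairs $(l_1,l_2),(l_3,l_4)\in L^2$. Equating the two resulting expressions for the shared variable $a_4$ yields, after simplification, the single $L$-linear relation
\[
l_4=l_1+|t|^2l_3-2t_1l_2.
\]
Thus $\dim_L H^{1,1}(A_M,L)$ equals the $L$-dimension of the space of quadruples $(l_1,l_2,l_3,l_4)\in L^4$ satisfying this relation.

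Finally, I would analyze this relation inside $\R$ using $t_1\notin L$ (so $\{1,t_1\}$ is $L$-linearly independent). Rewriting it as $|t|^2l_3-2t_1l_2\in L$: if $(|t|^2+2t_1L)\cap L=\emptyset$, then $1,t_1,|t|^2$ are $L$-linearly independent and the only solutions have $l_2=l_3=0$ and $l_4=l_1$, giving a $1$-dimensional $L$-solution space; otherwise there exist $\lambda,\nu\in L$ with $|t|^2+2t_1\lambda=\nu$, and $(l_1,l_3)\in L^2$ can be chosen freely with $l_2=-\lambda l_3$ and $l_4=l_1+\nu l_3$, giving a $2$-dimensional solution space. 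Adding $16$ yields the claimed dichotomy. The main bookkeeping hurdle is reducing the two subsystems sharing $a_4$ to the single clean consistency relation above; once that is in hand, the dichotomy is the tautological statement that $|t|^2$ either does or does not lie in the $L$-span of $1$ and $t_1$.
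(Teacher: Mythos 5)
Your argument is correct and follows essentially the same route as the paper: split off the sixteen exceptional divisor classes and determine the $L$-rational $(1,1)$-classes on the abelian surface by explicit linear algebra against the lattice, arriving at the same dichotomy according to whether $(|t|^2+2t_1\cdot L)\cap L$ is empty. The only cosmetic difference is that you pass from $\hat{A}_{M}$ to $A_{M}$ via the principal polarization and evaluate alternating forms on $H_1$, whereas the paper computes directly on $\hat{A}_{M}$ using a Gaussian-elimination normal form $\Omega_1,\dots,\Omega_4$; your consistency relation $l_4=l_1+|t|^2l_3-2t_1l_2$ encodes exactly the same conditions as the paper's classes $\Omega_2,\Omega_3,\Omega_4$.
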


\begin{proof}
Fix $t\in \C$ and $\mu>0$ such that $-i\cdot M(\mu,t)$ is positive definite and assume that $t_1$, $|t|^2$ and $\det(M(\mu,t))$ do not lie in $L$.
The rational degree two Hodge structure of a Kummer surface $K3(A)$ is the direct sum of $16$ divisor classes with the degree two Hodge structure of $A$.
It therefore remains to investigate the dimension of $H^{1,1}(\hat{A}_{M(\mu,t)},L)$. 

We denote the holomorphic coordinates on $\C^2$ by $z=(z_1,z_2)$, where $z_j=x_j+iy_j$.
The cohomology of $\hat{A}_{M(\mu,t)}$ is given by the homology of $A_{M(\mu,t)}$ and so 
\[
\alpha_1=dx_1,\ \alpha_2=dx_2,\ \alpha_3=\mu/(2t_1)\cdot \left(2t_1 dy_1+dy_2\right),\  \alpha_4=\mu/(2t_1)\cdot \left( dy_1+|t|^2 dy_2\right) 
\]
form a basis of $H^1(\hat{A}_{M(\mu,t)},\Q)$.
Next, $H^{1,1}(\hat{A}_{M(\mu,t)})$ has basis $dz_1\cup d\overline z_1$, $dz_1\cup d\overline z_2$, $dz_2\cup d\overline z_1$ and $dz_2\cup d\overline z_2$.
This basis can be expressed in terms of $\alpha_j\cup \alpha _k$, where $1\leq j<k \leq 4$.
Applying the Gauß algorithm then yields the following new basis of $H^{1,1}(\hat{A}_{M(\mu,t)})$:
\begin{align*}
\Omega_1&:= \alpha_2\cup \alpha_4+\alpha_1\cup \alpha_3, \\
\Omega_2&:=\alpha_1\cup \alpha_4 -|t|^2 \cdot \alpha_1\cup \alpha_3, \\
\Omega_3&:=\alpha_2\cup \alpha_3-2t_1 \cdot \alpha_1\cup \alpha_3 , \\
\Omega_4&:=\alpha_3\cup \alpha_4-\det(M(\mu,t)) \cdot \alpha_1\cup \alpha_2 .
\end{align*}
From this description it follows that if a linear combination $\sum \lambda_i\Omega_i$ is $L$-rational, then all $\lambda_i$ lie in $L$.
Moreover, since $\det(M(\mu,t))\notin L$, the coefficient $\lambda_4$ needs to vanish.

Since $t_1,|t|^2\notin L$, neither $\Omega_2$ nor $\Omega_3$ is $L$-rational.
We conclude that $H^{1,1}(\hat{A}_{M(\mu,t)},L)$ is two-dimensional if
$
|t|^2+2t_1\cdot l_1 = l_2
$ 
has a solution $l_1,l_2\in L$, and it is one-dimensional otherwise.
The lemma follows.
\end{proof}

In the following we will stick to parameters $t$ that are contained in a sufficiently small neighborhood of $1/3+3i$.
For such $t$, the matrix $-i\cdot M(\mu,t)$ is positive definite.  
The reason for the explicit choice of the base point $1/3+3i$ is due to the fact that it slightly simplifies the proof of the subsequent lemma.
In order to state it, we call a point in $\CP^3$ transcendental if its coordinates in some standard affine chart are algebraically independent over $\Q$.
Equivalently, $z\in \CP^3$ is transcendental if and only if $P(z)\neq 0$ for all nontrivial homogeneous polynomials $P$ with rational coefficients. 
That is, the transcendental points of $\CP^3$ are those which lie in the complement of the (countable) union of hypersurfaces which can be defined over $\Q$.
It is important to note that $\Aut(\C)$ acts transitively on this set of points.

\begin{lemma} \label{lem:theta}
There is a neighborhood $V\subseteq \C$ of $1/3+3i$, such that for all $t=t_1+it_2\in V$ with $1$, $t_1$ and $|t|^2$ linearly independent over $\Q$, 
the following holds.
Any uncountable subset $U\subseteq \R_{>0}$ contains a point $\mu\in U$ with:
\begin{enumerate}
	\item The matrix $-i\cdot M(\mu,t)$ is positive definite.
	\item The determinant of $M(\mu,t)$ is not rational.
	\item The theta constant $\Theta_2(M(\mu,t))$ is a transcendental point of $\CP^3$.
\end{enumerate}
\end{lemma}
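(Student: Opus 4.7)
The plan is to show that the set of $\mu>0$ for which any of (1)--(3) fails is contained in a countable union of countable sets, so that any uncountable $U\subseteq\R_{>0}$ meets the complement. I would first shrink $V$ around $t_0=1/3+3i$ so that the real symmetric matrix $\left(\begin{smallmatrix} 2t_1 & 1 \\ 1 & |t|^2 \end{smallmatrix}\right)$ stays positive definite with $t_1>0$ on $V$; at $t_0$ this holds because $2t_1=2/3$ and $2t_1|t|^2-1=137/27>0$. Then $-iM(\mu,t)=\tfrac{\mu}{2t_1}\left(\begin{smallmatrix} 2t_1 & 1 \\ 1 & |t|^2 \end{smallmatrix}\right)$ is positive definite for every $\mu>0$ and every $t\in V$, handling (1) for all positive $\mu$. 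For (2) I would use the explicit formula $\det M(\mu,t)=-\mu^2(2t_1|t|^2-1)/(4t_1^2)$, which is a nonzero real quadratic in $\mu$ whenever $t\in V$, so $\{\mu>0:\det M(\mu,t)\in\Q\}$ is countable.

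For (3) I would use that the non-transcendental points of $\CP^3$ form a countable union $\bigcup_P\{P=0\}$, where $P$ ranges over nonzero homogeneous polynomials with rational coefficients. For each such $P$, the function $\mu\mapsto P(\Theta_2(M(\mu,t)))$ is holomorphic on $\{\RE(\mu)>0\}$ since the defining series of each $\Theta_2[\delta]$ converges absolutely there, and provided it is not identically zero its real zero locus is discrete, hence countable. Summing these countable sets, together with those from (1) and (2), keeps the total bad set countable, and a suitable $\mu\in U$ exists.

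The core of the proof is therefore the nonvanishing claim: for every nonzero homogeneous $P\in\Q[X_{00},X_{10},X_{01},X_{11}]$, $\mu\mapsto P(\Theta_2(M(\mu,t)))$ is not identically zero. I would establish this via asymptotics as $\mu\to\infty$ along $\R_{>0}$. Writing the real quadratic form attached to $-iM(\mu,t)/\mu$ as $Q(v)=v_1^2+v_1v_2/t_1+|t|^2v_2^2/(2t_1)$, the Fourier series (\ref{def:thetaseries}) becomes $\Theta_2[0,0](M(\mu,t))=1+O(e^{-c\mu})$ and $\Theta_2[\delta](M(\mu,t))=c_\delta\,e^{-\pi\mu\nu_\delta}(1+O(e^{-c\mu}))$ for $\delta\neq(0,0)$, where $c_\delta\in\Z_{>0}$ counts the minimizers of $Q(n+\delta/2)$ over $n\in\Z^2$ and $\nu_\delta$ is twice the minimum value. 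After possibly shrinking $V$, the same finitely many $n$ govern the minimum throughout $V$, so each $\nu_\delta$ becomes a fixed $\Q$-linear combination of $1$, $1/t_1$, $|t|^2/t_1$ whose coefficient vectors (for $\delta\neq(0,0)$) one computes directly and checks to be linearly independent in $\Q^3$.

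The hypothesis that $1,t_1,|t|^2$ are linearly independent over $\Q$ is equivalent (multiplying by $t_1$) to $\Q$-linear independence of $1,1/t_1,|t|^2/t_1$, which combined with the previous step makes $\nu_{1,0},\nu_{0,1},\nu_{1,1}$ linearly independent over $\Q$. Consequently the map $(\alpha_1,\alpha_2,\alpha_3)\mapsto\alpha_1\nu_{1,0}+\alpha_2\nu_{0,1}+\alpha_3\nu_{1,1}$ is injective on $\Z_{\geq 0}^3$, so distinct monomials in $P$ produce distinct exponential rates after substitution; the unique monomial of minimal rate among those with nonzero coefficient dominates as $\mu\to\infty$ with a nonzero positive real leading factor, forcing $P(\Theta_2(M(\mu,t)))\not\equiv 0$. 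The main obstacle I anticipate is the lattice-minimization step: at the exceptional base point $t_0$ itself, $\nu_{0,1}$ and $\nu_{1,1}$ happen to coincide (because $t_1=1/3$ is rational), so $V$ must be chosen small enough to stabilize the minimizers, while the hypothesis on $t$ conveniently rules out this coincidence locus $t_1=1/3$.
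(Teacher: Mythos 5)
Your proposal is correct and follows essentially the same route as the paper: positive definiteness of $-iM(\mu,t)$ near $1/3+3i$ for all $\mu>0$, the observation that $\det M(\mu,t)$ is a nonzero multiple of $\mu^2$, and the theta asymptotics via the lattice minimizers of $Q(n+\delta/2)$, with the $\Q$-linear independence of the three minimal exponents deduced from that of $1$, $t_1$, $|t|^2$. The only differences are organizational (you bound the bad set by a countable union of zero sets of holomorphic functions rather than arguing by contradiction via an accumulation point and the identity theorem, and you let $\mu\to\infty$ rather than $\tau\to i\infty$), and your deferred computation of the coefficient vectors does check out, matching the paper's values $Q(n_{1,0})=t_1/2$, $Q(n_{0,1})=|t|^2/4+2t_1-1$, $Q(n_{1,1})=|t|^2/4+t_1/2-1/2$.
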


\begin{proof}
We define the quadratic form
\[
Q(z):=2t_1 z_1^2+2z_1z_2+ |t|^2z_2^2  ,
\] 
where $z=(z_1,z_2)\in \R^2$.  
For $\delta \in \left\{0,1\right\}^2$, the homogeneous coordinate $\Theta_2[\delta](M(\mu,t))$ of the theta constant $\Theta_2(M(\mu,t))$ is then given by
\begin{align} \label{eq:lem:theta}
\Theta_2[\delta](M(\mu,t)) &=\sum_{n\in\Z^2} \exp\left({-\frac{\pi \mu}{t_1} \cdot Q(n+\delta/2)}\right) ,
\end{align}
see  (\ref{def:thetaseries}).
At the point $t=1/3+3i$, we have
\[
Q(z)|_{t=1/3+3i}=\frac{2}{3} \cdot (z_1+3z_2/2)^2+ \frac{137}{18}\cdot z_2^2 .
\]
This shows that there is a neighborhood $V$ of $1/3+3i$ such that $-i\cdot M(\mu,t)$ is positive definite for all $t\in V$ and all $\mu>0$.
For such $t$, the function in (\ref{eq:lem:theta}) is a modular form in the variable $i\cdot \mu \in \HH$, see \cite{freitag}. 

Let us now pick some $t\in V$ with $1$, $t_1$ and $|t|^2$ linearly independent over $\Q$.
Then $-i\cdot M(\mu,t)$ is positive definite and so $\det(M(\mu,t))$ is a nonzero multiple of $\mu^2$.
After possibly removing countably many points of $U$, we may therefore assume 
\[
\det(M(\mu,t))\notin \Q
\]
for all $\mu\in U$.

For a contradiction, we now assume that there is no $\mu\in U$ such that $\Theta_2(M(\mu,t))$ is a transcendental point of $\CP^3$.
Since the polynomial ring in four variables over $\Q$ is countable, we may then assume that there is one homogeneous polynomial $P$ with $P(\Theta_2(M(\mu,t)))=0$ for all $\mu\in U$.
Since $U\subseteq \R_{>0}$ is uncountable, it contains an accumulation point.
Then the identity theorem yields
\begin{align} \label{eq:P(theta)}
P(\Theta_2(M(-i\tau,t)))=0 ,
\end{align}
where the left hand side is considered as holomorphic function in $\tau \in \HH$.

For $\tau \to i\infty$, the modular form $\Theta_2[\delta](M(-i\tau,t))$ from (\ref{eq:lem:theta}) is dominated by the summand where the exponent $Q(n)$ with $n\in \N^2+\delta$ is minimal.
After possibly shrinking $V$, these minima $n_\delta\in \N^2+\delta$ of $Q(n)$ are given as follows:
\[
n_{0,0}=(0,0),\ n_{1,0}=\pm(1/2,0),\ n_{0,1}=\pm(-1,1/2) \ \ \text{and}\ \ n_{1,1}=\pm(-1/2,1/2). 
\]
Noting that $Q(n_{0,0})$ vanishes, we conclude that for $\tau \to i\infty$, the monomial
\[
\Theta_2[0,0](M(-i\tau,t))^h\cdot \Theta_2[1,0](M(-i\tau,t))^j\cdot 
\Theta_2[0,1](M(-i\tau,t))^k\cdot \Theta_2[1,1](M(-i\tau,t))^l 
\]
is dominated by the summand
\[
2\cdot \exp\left(\frac{\pi i\tau}{t_1}\cdot \left(j\cdot Q(n_{1,0})+k\cdot Q(n_{0,1})+l\cdot Q(n_{1,1})\right)\right) .
\]
The left hand side in (\ref{eq:P(theta)}) is then dominated by those summands for which 
\[
j\cdot Q(n_{1,0})+k\cdot Q(n_{0,1})+l\cdot Q(n_{1,1})
\]
is minimal. 
We will therefore arrive at a contradiction as soon as we have seen that this summand is unique.
That is, it suffices to see that $Q(n_{1,0})$, $Q(n_{0,1})$ and $Q(n_{1,1})$ are linearly independent over $\Q$.
In order to see the latter, we calculate 
\[
Q(n_{1,0})=t_1/2,\ Q(n_{0,1})=|t|^2/4+2t_1-1\ \ \text{and}\ \ Q(n_{1,1})= |t|^2/4+t_1/2-1/2 .
\]
The claim is now obvious since $1$, $t_1$ and $|t|^2$ are linearly independent over $\Q$ by assumptions.
This finishes the proof of the lemma.
\end{proof}

We are now able to conclude Case 4.
Let $V$ be the neighborhood of $1/3+3i$ from Lemma \ref{lem:theta}.
Since $K_\R=\Q$ and since $K$ is not contained in any imaginary quadratic extension of $\Q$, we may pick some $t=t_1+it_2\in K\cap V$ which is not quadratic over $\Q$. 
Then $t_1$ is not rational since otherwise $(t-t_1)^2$ would lie in $K_\R=\Q$, which yielded a quadratic relation for $t$ over $\Q$.
It follows that $1$, $t+\overline{t}=2t_1$ and $t\cdot \overline t=|t|^2$ are linearly independent over $\Q$, as otherwise $\overline t$ would lie in $K$ and so $t+\overline{t}=2t_1\in K_\R=\Q$ were rational. 
Hence, the assumptions of Lemma \ref{lem:theta} are satisfied and so there is some $\mu\in \R_{>0}$ such that the pair $(\mu,t)$ satisfies (1)--(3) in Lemma \ref{lem:theta}.

Next, we consider $t'=t_1'+3i\in V$ with $1$, $t_1'$ and $t_1'^2$ linearly independent over $\Q$.
Since $V$ is a neighborhood of $1/3+3i$, there are uncountably many values for $t_1'$ such that $t'$ has the above property.
We claim that we can choose $t_1'$ within this uncountable set such that additionally
\begin{align} \label{eq:t'}
2t'_1 \lambda_1=\lambda_2+|t'|^2
\end{align}
has no solution $\lambda_1,\lambda_2\in K$.
In order to prove this, suppose that $t_1'$ is a solution of (\ref{eq:t'}) for some $\lambda_1,\lambda_2\in K$.
Since $|t'|^2$ is a real number, it follows that $t_1'$ lies in the set of quotients $x/y$ where $x$ and $y$ are imaginary parts of some elements of $K$.
Since $K$ is countable, so is the latter set.
Our claim follows since we can choose $t_1'$ within an uncountable set.
That is, we have just shown that there is a point $t'=t_1'+3i\in V$ with $1$, $t_1'$ and $|t'|^2$ linearly independent over $\Q$ such that additionally, (\ref{eq:t'}) has no solution in $K$.
Then again the assumptions of Lemma \ref{lem:theta} are met and so there is some $\mu'\in \R_{>0}$ such that the pair $(\mu',t')$ satisfies (1)--(3) in Lemma \ref{lem:theta}.

Since $(\mu,t)$ and $(\mu',t')$ satisfy Lemma \ref{lem:theta}, $\Theta_2(M(\mu,t))$ and $\Theta_2(M(\mu',t'))$ are transcendental points of $\CP^3$.
Because $\Aut(\C)$ acts transitively on such points it follows that there is some automorphism $\sigma\in \Aut(\C)$ with
\[
\sigma(\Theta_2(M(\mu,t)))=\Theta_2(M(\mu',t')) .
\]
As the functor $A\mapsto \hat A$ on the category of abelian varieties commutes with the $\Aut(\C)$-action, it therefore follows from Lemma \ref{lem:K3^sigma} that
\[
X:=K3(\hat A_{M(\mu,t)}) \ \ \text{with}\ \ X^\sigma\cong K3(\hat A_{M(\mu',t')}) . 
\] 
By our choices, $t_1$, $|t|$ and $\det(M(\mu,t))$ lie in $\R\setminus \Q$ and the same holds for the pair $(\mu',t')$.
Since $K_\R=\Q$, it follows that $(\mu,t)$ as well as $(\mu',t')$ satisfy the assumptions of Lemma \ref{lem:K3(A)}.
Since (\ref{eq:t'}) has no solution in $K$, whereas 
\[
2t_1\lambda_1=\lambda_2+|t|^2
\] 
has the solution $\lambda_1=t$ and $\lambda_2=t^2$ in $K$, it follows from Lemma \ref{lem:K3(A)} that $H^{1,1}(X,K)$ and $H^{1,1}(X^\sigma,K)$ are not equidimensional.
This concludes Case 4 and hence finishes the proof of Theorem \ref{thm:H^{p,p}}.

\begin{remark} \label{rem:Qbar}
Theorem \ref{thm:H^{p,p}} does not remain true if one restricts to smooth complex projective varieties which can be defined over $\overline \Q$.
Indeed, for each smooth complex projective variety $X$ there is a finitely generated extension $K_X$ of $\Q$ such that for all $p\geq 0$ the group $H^{p,p}(X,\C)$ is generated by $K_X$-rational classes. 
As there are only countably many varieties over $\overline \Q$, it follows that there is an extension $K$ of $\Q$ which is generated by countably many elements such that for each smooth complex projective variety $X$ over $\overline \Q$ and for each $p\geq 0$, the dimension of $H^{p,p}(X,K)$ equals $h^{p,p}(X)$. 
The above claim follows, since $h^{p,p}(X)$ is invariant under conjugation.
\end{remark}

\section{Charles--Voisin's construction} \label{sec:charles}
In this section we carry out a variant of a general construction method due to F. Charles and C. Voisin \cite{charles,voisin:inv}.
The proofs of Propositions \ref{prop:psi} and \ref{prop:psi:(1,1)} below will then be the technical heart of the proof of Theorem \ref{thm0} in Section \ref{sec:thm0'}. 

We start with a smooth complex projective surface $Y$ with $b_1(Y)=0$ and automorphisms $f,f'\in \Aut(Y)$.
Then we pick an embedding
\[
i:Y \hookrightarrow \CP^N
\] 
and assume that $f^\ast$ and $f'^\ast$ fix the pullback $i^\ast h$ of the hyperplane class $h\in H^2(\CP^N,\Z)$.

For a general choice of points $u$, $v$, $w$ and $t$ of $\CP^N$ and $y$ of $Y$, the following smooth subvarieties of $Y\times Y\times \CP^N$ are disjoint:
\begin{align} \label{eq:def:Z_i}
Z_1:=Y\times y\times u,\ \ Z_2:=\Gamma_{\id_Y}\times v,\ \ Z_3:=\Gamma_f\times w,\ \ Z_4:=\Gamma_{f'}\times t,\ \ Z_5:= y\times \Gamma_i ,
\end{align}
where $\Gamma$ denotes the graph of a morphism.
The blow-up 
\[
X:=Bl_{Z_1 \cup \ldots \cup Z_5} \left(Y\times Y \times \CP^N\right)
\]
of $Y\times Y \times \CP^N$ along the union $Z_1\cup\ldots \cup Z_5$ is a smooth complex projective variety. 
Since $b_1(Y)=0$ and $\dim(Y)=2$, it follows from the description of the cohomology of blow-ups, see Section \ref{subsec:blow-ups}, that the cohomology algebra of $X$ is generated by degree two classes.

Next, let $\sigma$ be any automorphism of $\C$.
Then the automorphisms $f$ and $f'$ of $Y$ induce automorphisms $f^{\sigma}$ and $f'^{\sigma}$ of $Y^\sigma$.
Since conjugation commutes with blow-ups, we have
\[
X^{\sigma}=Bl_{Z_1^{\sigma} \cup \ldots \cup Z_5^{\sigma}} \left(Y^{\sigma}\times Y^{\sigma} \times \CP^N\right) ,
\]
where we identified $\CP^N$ with its conjugate ${\CP^N}^\sigma$, and where 
\[
Z_1^\sigma=Y^\sigma\times y^\sigma\times u^\sigma,\ \ Z_2^\sigma=\Gamma_{\id_{Y^\sigma}}\times v^\sigma,\ \ Z_3^\sigma=\Gamma_{f^\sigma}\times w^\sigma,\ \ Z_4^\sigma=\Gamma_{f'^\sigma}\times t^\sigma,\ \ Z_5^\sigma= y^\sigma \times \Gamma_{i^\sigma} .
\]
Here $u^\sigma$, $v^\sigma$, $w^\sigma$ and $t^\sigma$ are points on $\CP^N$, $y^\sigma\in Y^\sigma$, and $i^\sigma:Y^\sigma \hookrightarrow \CP^N$ is the inclusion, induced by $i$. 
The pullback of the hyperplane class via $i^\sigma$ is denoted by ${i^\sigma}^\ast h^\sigma$.

In the next proposition, we will assume that the surface $Y$ has the following properties.
\begin{enumerate}
	\item[(A1)] There exist elements $\alpha,\beta \in H^{1,1}(Y,\Q)$ with $\alpha^2=\beta^2=0$ and $\alpha\cup \beta \neq 0$. 
	\item[(A2)] The sets of eigenvalues of $f^\ast$ and $f'^\ast$ on $H^{2}(Y,\C)$ are distinct.
\end{enumerate}
Then, for a smooth complete intersection subvariety
\[
T\subseteq X , 
\]
with $\dim(T)\geq 4$, the following holds.

\begin{proposition} \label{prop:psi} 
Suppose that (A1) and (A2) hold, and let $K\subseteq \C$ be a subfield.
Then any weak isomorphism between the $K$-multilinear intersection forms on $H^2(T,K)$ and $H^2(T^\sigma,K)$ induces an isomorphism of graded $K$-algebras
\[
\psi: H^\ast(Y,K)\stackrel{\sim}\longrightarrow H^\ast(Y^\sigma,K) ,
\] 
with the following two properties:
\begin{enumerate}
	\item[(P1)] In degree two, $\psi$ maps $i^\ast h$ to a multiple of ${i^\sigma}^\ast h^\sigma$. 
	\item[(P2)] The isomorphism $\psi$ commutes with the induced actions of $f$ and $f'$, i.e.\ 
	\[
\psi \circ f^\ast=(f^\sigma)^\ast \circ  \psi \ \ \text{and}\ \ \psi \circ (f')^\ast=(f^{\prime\sigma})^\ast \circ \psi  .
\] 
\end{enumerate}
\end{proposition}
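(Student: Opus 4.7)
My plan is to proceed in three stages: reduce via Lefschetz from $T$ to $X$, identify the blow-up decomposition of $H^2(X,K)$ intrinsically from its $n$-linear intersection form, and then read off $\psi$ together with (P1) and (P2).

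First I would invoke the Lefschetz hyperplane theorem. Since $T\subseteq X$ is a smooth complete intersection of dimension $n\geq 4$, pullback gives an isomorphism $H^2(X,K)\stackrel{\sim}\to H^2(T,K)$, and the $n$-linear intersection form on $H^2(T,K)$ corresponds to the form on $H^2(X,K)$ given by $(\alpha_1,\ldots,\alpha_n)\mapsto \int_X \alpha_1\cup\cdots\cup\alpha_n\cup [T]$. A weak isomorphism of these forms on $T$ and $T^\sigma$ thus lifts to a $K$-linear isomorphism $\Phi: H^2(X,K)\to H^2(X^\sigma,K)$ preserving the corresponding $n$-linear forms on $X$ and $X^\sigma$ up to a nonzero scalar. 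The blow-up formula together with $b_1(Y)=0$ and K\"unneth give
\[
H^2(X,K) \;=\; p_1^\ast H^2(Y,K) \,\oplus\, p_2^\ast H^2(Y,K) \,\oplus\, K\cdot h_X \,\oplus\, \bigoplus_{i=1}^5 K\cdot e_i,
\]
where $p_j: X\to Y$ ($j=1,2$) and $p_3: X\to \CP^N$ are the composites of the blow-down with the factor projections, $h_X = p_3^\ast h$, and $e_i = [D_i]$; the analogous decomposition holds on $X^\sigma$.

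The heart of the argument is to show that $\Phi$ respects both decompositions. The classes $e_i$ and $h_X$ are characterized cohomologically through the vanishing of sufficiently high powers, read off via Lemma \ref{lem:mult}: each $e_i$ is supported on a projective bundle over the two-dimensional center $Z_i$, while $h_X$ is pulled back from $\CP^N$. The two summands $p_j^\ast H^2(Y,K)$ are pinned down using (A1): the pair of isotropic classes $\alpha,\beta\in H^{1,1}(Y,\Q)$ with $\alpha\cup\beta\neq 0$ forces a rank-two isotropic pattern that the $n$-linear form detects and that the other summands cannot produce. The cross-relations attached to the graph centers $Z_2=\Gamma_{\id_Y}$, $Z_3=\Gamma_f$, $Z_4=\Gamma_{f'}$ govern how the two copies of $H^2(Y,K)$ sit relative to each other, via identities of the form $p_2^\ast\gamma|_{D_i} = p_1^\ast g^\ast \gamma|_{D_i}$ with $g\in\{\id_Y,f,f'\}$. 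Assumption (A2), combined with Lemma \ref{lem:eigenvalues}, then ensures that $\Phi$ must send $e_3$ and $e_4$ to multiples of $e_3^\sigma$ and $e_4^\sigma$ respectively — not swap them — because the $\sigma$-conjugates of the spectra of $f^\ast$ and $(f')^\ast$ remain distinct.

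Once $\Phi$ is shown to respect both decompositions, its restriction to $p_1^\ast H^2(Y,K)$ yields a $K$-linear isomorphism $\psi_2: H^2(Y,K)\to H^2(Y^\sigma,K)$. The center $Z_2=\Gamma_{\id_Y}$ forces the map extracted from $p_1^\ast$ to coincide with the one from $p_2^\ast$; the centers $Z_3, Z_4$ then impose $\psi_2\circ f^\ast = (f^\sigma)^\ast \circ \psi_2$ and its analogue for $f'$, giving (P2); the center $Z_5=y\times\Gamma_i$ ties $h_X$ to $p_1^\ast(i^\ast h)$, giving (P1). Since $b_1(Y)=0$, we have $H^\ast(Y,K)=H^0\oplus H^2\oplus H^4$, and the cup product $H^2\otimes H^2\to H^4$ is determined by the bilinear form read off from that on $H^2(X,K)$ up to the same scalar, so $\psi_2$ extends uniquely to a graded algebra isomorphism $\psi$. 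The main obstacle I expect is the intrinsic characterization step: rigidifying the six-summand decomposition of $H^2(X,K)$ from the $n$-linear intersection form alone, and in particular preventing $\Phi$ from mixing the two copies of $H^2(Y,K)$ or scrambling them with the exceptional or hyperplane pieces. The combinatorics of the five subvarieties $Z_i$, the isotropic pattern from (A1), the eigenvalue distinction (A2), and the identities of Lemma \ref{lem:mult} are engineered precisely to make this step go through.
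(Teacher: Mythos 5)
Your overall strategy is the same as the paper's (pass from $T$ to $X$ by Lefschetz, characterize the blow-up decomposition of $H^2(X,K)$ intrinsically using (A1) and the kernels of $\cup[D_i]$, pin down the permutation of the exceptional divisors via (A2) and Lemma \ref{lem:eigenvalues}, then read off $\psi$, (P1) and (P2)). But there is a genuine gap at the very first stage, and it is precisely the step the paper isolates as Claim \ref{claim:phi}. Every intrinsic characterization you propose to use --- square-zero classes and their pairwise products, the kernels $F_i=\ker(\cup[D_i])$, the relation $h\cup[D_i]=0$ for $i\neq 5$ --- lives in $H^4(X,K)$, i.e.\ involves \emph{twofold} cup products. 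Your hypothesis only gives a $K$-linear map $\Phi$ on $H^2$ respecting the \emph{$n$-fold} top-degree form $(\alpha_1,\dots,\alpha_n)\mapsto\int_X\alpha_1\cup\cdots\cup\alpha_n\cup[T]$ up to scalar. It is not automatic that such a $\Phi$ sends a degree-4 relation $\sum_i\alpha_i\cup\beta_i=0$ to a degree-4 relation on $X^\sigma$; you can only conclude that $\sum_i\Phi(\alpha_i)\cup\Phi(\beta_i)$ pairs to zero against all $(n-2)$-fold products of degree-two classes on $T^\sigma$. The paper converts this into actual vanishing in $H^4(X^\sigma,K)$ by using that the cohomology of $X^\sigma$ is generated in degree two (so Poincar\'e duality pairs $H^4$ perfectly against $SH^2$ in the complementary degree) and then Hard Lefschetz to strip off the factor $[T^\sigma]=\omega^{N+4-n}$. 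Without this argument, the ring homomorphism $\phi:SH^2(X,K)^{\leq 4}\to SH^2(X^\sigma,K)^{\leq 4}$ is not known to be well defined, and none of your subsequent characterizations transport along $\Phi$. The phrase ``a rank-two isotropic pattern that the $n$-linear form detects'' hides exactly this missing reduction from the $n$-linear form to the degree-4 multiplication.

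A secondary, smaller point: your proposed characterization of the classes $e_i$ and $h_X$ ``through the vanishing of sufficiently high powers'' and through $D_i$ being ``supported on a projective bundle over $Z_i$'' is geometric rather than cohomological, and is not how the identification is actually forced. The paper instead uses that the maps $\cup\alpha,\cup h,\cup[D_1],\dots,\cup[D_5]:H^2(Y\times Y,K)\to H^4(X,K)$ have images in direct sum with $\dim\ker(\cup\alpha)<b_2(Y)=\dim F_i$ (Lemma \ref{lem8}), so that any class whose cup-product kernel on $H^2(Y\times Y,K)$ has dimension $b_2(Y)$ must be a multiple of a single $[D_i]$; the case $\rho(5)=5$, $\rho(1)=1$ and the eigenvalue argument for $\{2,3,4\}$ then follow. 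You should make this linear-algebra step explicit, since ``not swap $e_3$ and $e_4$'' also has to exclude mixing with $e_2$ and with $H^2(Y\times Y,K)$ itself. Once Claim \ref{claim:phi} is in place, the rest of your outline does match the paper's proof.
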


Proposition \ref{prop:psi} has an analog for isomorphisms between intersection forms on $H^{1,1}(-,K)$. 
In order to state it, we need the following variant of (A2):
\begin{enumerate}
	\item[(A3)] The sets of eigenvalues of $f^\ast$ and $f'^\ast$ on $H^{1,1}(Y,\C)$ are distinct and $\Aut(\C)$-invariant.
\end{enumerate}
Note that $f^\ast$ and $f'^\ast$ are defined on integral cohomology and so their sets of eigenvalues on $H^{2}(Y,\C)$ -- but not on $H^{1,1}(Y,\C)$ -- are automatically $\Aut(\C)$-invariant. 
For this reason, we did not have to impose this additional condition in (A2).

\begin{proposition} \label{prop:psi:(1,1)}
Suppose that (A1) and (A3) hold, and let $K\subseteq \C$ be a subfield which is stable under complex conjugation.
Then any weak isomorphism between the $K$-multilinear intersection forms on $H^{1,1}(T,K)$ and $H^{1,1}(T^\sigma,K)$ induces an isomorphism of graded $K$-algebras 
\[
\psi: H^{\ast,\ast}(Y,K)\stackrel{\sim}\longrightarrow H^{\ast,\ast}(Y^\sigma,K) ,
\] 
which satisfies (P1) and (P2) of Proposition \ref{prop:psi}.
\end{proposition}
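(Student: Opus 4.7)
The plan is to follow the proof of Proposition~\ref{prop:psi} in parallel, replacing $H^2$ by $H^{1,1}$ throughout and verifying that each step still goes through under assumption (A3) and the hypothesis that $K$ is stable under complex conjugation. The starting point is the Lefschetz hyperplane theorem: since $T\subseteq X$ is a smooth complete intersection of ample hypersurfaces with $\dim T\geq 4$, pullback induces an isomorphism of rational Hodge structures $H^2(X,\Q)\cong H^2(T,\Q)$, and hence $H^{1,1}(X,K)\cong H^{1,1}(T,K)$. Under this identification the $K$-multilinear intersection form on $H^{1,1}(T,K)$ becomes the form $(\alpha_1,\dots,\alpha_n)\mapsto \int_X\alpha_1\cup\cdots\cup\alpha_n\cup[T]$ on $H^{1,1}(X,K)$, with $n=\dim T$. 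A weak isomorphism on $T$ therefore yields a $K$-linear isomorphism $\phi:H^{1,1}(X,K)\to H^{1,1}(X^\sigma,K)$ preserving these multilinear forms up to a global scalar.

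Next I would combine the blow-up formula (Lemma~\ref{lem:blow-up}), K\"unneth (using $b_1(Y)=0$), and the closedness of $K$ under complex conjugation to produce the direct sum decomposition
\[
H^{1,1}(X,K)= p_1^\ast H^{1,1}(Y,K)\oplus p_2^\ast H^{1,1}(Y,K)\oplus K\cdot p_3^\ast h\oplus\bigoplus_{j=1}^5K\cdot[D_j],
\]
and the analogous one for $X^\sigma$. The stability of $K$ under complex conjugation is precisely what ensures that $H^{1,1}(Y,K)$ is a $K$-form of $H^{1,1}(Y,\C)$ of the expected rank, so that the K\"unneth decomposition restricted to $(1,1)$-classes has no hidden collapses.

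The central point is then to show that $\phi$ respects this decomposition; this is the same intersection-theoretic bookkeeping as in Proposition~\ref{prop:psi}, fed now through Lemma~\ref{lem:mult}. Assumption (A1) supplies an isotropic pair in each $H^{1,1}(Y,\Q)$-summand, which makes it possible to single out each summand and to separate the two $Y$-factors from one another and from the exceptional and hyperplane summands through intersection data. Assumption (A3), together with Lemma~\ref{lem:eigenvalues}, is what distinguishes $\Gamma_f$ from $\Gamma_{f'}$: because the eigenvalue sets of $f^\ast$ and $f^{\prime\ast}$ on $H^{1,1}(Y,\C)$ are distinct and $\Aut(\C)$-invariant, those of $(f^\sigma)^\ast$ and $(f^{\prime\sigma})^\ast$ on $H^{1,1}(Y^\sigma,\C)$ coincide with the original ones and remain distinct; hence $\phi$ cannot exchange the $\Gamma_f$- and $\Gamma_{f'}$-contributions. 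Consequently $\phi$ descends to a $K$-linear isomorphism $\psi:H^{1,1}(Y,K)\to H^{1,1}(Y^\sigma,K)$ intertwining $f^\ast$ with $(f^\sigma)^\ast$ and $f^{\prime\ast}$ with $(f^{\prime\sigma})^\ast$ (this is (P2)) and sending $i^\ast h$ to a scalar multiple of $(i^\sigma)^\ast h^\sigma$ (this is (P1), which comes from the one-dimensional hyperplane summand being the only one not spanned by an exceptional class or a class in the isotropic pencils of (A1)).

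Finally, $\psi$ upgrades to an isomorphism of graded $K$-algebras $H^{\ast,\ast}(Y,K)\to H^{\ast,\ast}(Y^\sigma,K)$: as $Y$ is a surface, the only nontrivial product to control is the pairing $H^{1,1}(Y,K)\otimes H^{1,1}(Y,K)\to H^{2,2}(Y,K)\cong K$, whose preservation follows directly from the fact that $\phi$ respects the multilinear form on $X$ and that this pairing on $Y$ is encoded in intersection numbers on $X$ involving $p_1^\ast$-classes and the class $[T]$. The main obstacle I anticipate lies in the third step: replacing (A2) by (A3) requires genuine control over the action of $\Aut(\C)$ on the eigenvalues of $f^\ast$ and $f^{\prime\ast}$ on the Hodge piece $H^{1,1}$, which, unlike the full $H^2$, is not automatically defined over $\Q$; the $\Aut(\C)$-invariance clause in (A3) is exactly the extra hypothesis needed to make Lemma~\ref{lem:eigenvalues} identify eigenvalue sets before and after conjugation, and hence to run the separation argument at the level of $(1,1)$-classes.
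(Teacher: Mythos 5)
There is a genuine gap at the very first stage of your argument. Knowing that $\phi'$ preserves the degree-$2n$ multilinear intersection form does \emph{not} immediately let you do the ``intersection-theoretic bookkeeping'' of Proposition \ref{prop:psi}: that bookkeeping (the kernels $F_i=\ker(\cup[D_i])$, the separation of the summands, the endomorphisms $g_{j,k}$) takes place in degree $4$ of the algebra $SH^{1,1}(X,K)^{\leq 4}$, so you first need to know that $\phi'$ extends to a well-defined homomorphism of graded algebras, i.e.\ that $\sum_i\alpha_i\cup\beta_i=0$ implies $\sum_i\phi'(\alpha_i)\cup\phi'(\beta_i)=0$. This is Claim \ref{claim:phi} in the $H^2$-case and its analogue Claim \ref{claim:xi} here, and it is the \emph{only} genuinely new step in the proof of Proposition \ref{prop:psi:(1,1)}; your proposal skips it entirely. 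Moreover, the argument that works for $H^2$ breaks down for $H^{1,1}$: for $H^2$ one tests $\sum_i\phi'(\alpha_i)\cup\phi'(\beta_i)$ against all $\eta\in SH^2(X^\sigma,K)^{2n-4}$, and since $H^\ast(X^\sigma,\C)$ is generated in degree two this exhausts enough of $H^{2n-4}$ for Poincar\'e duality and Hard Lefschetz to force the class to vanish. For $H^{1,1}$ the test classes range only over $SH^{1,1}(X^\sigma,K)^{2n-4}$, which is far too small for Poincar\'e duality. The paper's substitute is to write $\sum_i\phi'(\alpha_i)\cup\phi'(\beta_i)=\delta_0\omega^2+\delta_1\cup\omega+\delta_2$ with $\delta_j\in H^{j,j}(X^\sigma,K)_{pr}$ and to test against $\eta_j=\overline{\delta_j}\cdot\omega^{n-2-j}$, concluding $\delta_j=0$ from the Hodge--Riemann bilinear relations.

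This also shows that you have misidentified the role of the hypothesis that $K$ is stable under complex conjugation: it is needed precisely so that the test classes $\overline{\delta_j}\cdot\omega^{n-2-j}$ are $K$-rational and lie in $SH^{1,1}(X^\sigma,K)$, not to make $H^{1,1}(Y,K)$ a ``$K$-form of $H^{1,1}(Y,\C)$ of the expected rank'' (which is false in general --- take $K=\Q$ --- and is not needed anywhere, since the proposition only asserts an isomorphism of the algebras $H^{\ast,\ast}(-,K)$, whatever their dimensions). Once Claim \ref{claim:xi} is in place, the remainder of your outline --- reusing the decomposition and eigenvalue arguments of Proposition \ref{prop:psi} with (A3) and Lemma \ref{lem:eigenvalues} in place of (A2) --- matches the paper and is correct.
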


\begin{remark}
The assumption (A1) in the above propositions is only needed if $\dim(T)=4$.
\end{remark}

In the following two subsections we prove Propositions \ref{prop:psi} and \ref{prop:psi:(1,1)} respectively;  
important steps will be similar to Charles--Voisin's arguments in \cite{charles,voisin:inv}.

\subsection{Proof of Proposition \ref{prop:psi}} \label{subsec:proof:prop:psi} 
Suppose that there is a $K$-linear isomorphism
\begin{align} \label{eq1:prop:psi}
\phi':H^2(T,K)\stackrel{\sim}\longrightarrow H^2(T^\sigma,K) ,
\end{align}
which induces a weak isomorphism between the respective multilinear intersection forms. 

By the Lefschetz hyperplane theorem, the natural maps
\begin{equation} \label{eq:Lef}
H^k(X,K) \longrightarrow H^k(T,K)\ \ \text{and}\ \ H^k(X^\sigma,K) \longrightarrow H^k(T^\sigma,K)
\end{equation}
are isomorphisms for $k< n$ and injective for $k=n$, where $n:=\dim(T)$.
Using this we will identify classes on $X$ and $X^\sigma$ of degree $\leq n$ with classes on $T$ and $T^\sigma$ respectively.

We denote by $SH^2(-,K)$ the subalgebra of $H^{\ast}(-,K)$ that is generated by $H^2(-,K)$.
Its quotient by all elements of degree $\geq r+1$ is denoted by $SH^2(-,K)^{\leq r}$.
Since $\dim(T)\geq 4$, we obtain from (\ref{eq:Lef}) canonical isomorphisms
\[
SH^2(X,K)^{\leq 4} \stackrel{\sim}{\longrightarrow} SH^2(T,K)^{\leq 4}\ \ \text{and}\ \ SH^{2}(X^\sigma,K)^{\leq 4}\stackrel{\sim}{\longrightarrow}  SH^{2}(T^\sigma,K)^{\leq 4} .
\] 

\begin{claim} \label{claim:phi}
The isomorphism $\phi'$ from (\ref{eq1:prop:psi}) induces a unique isomorphism 
\[
\phi:SH^2(X,K)^{\leq 4} \stackrel{\sim}\longrightarrow SH^{2}(X^\sigma, K)^{\leq 4} 
\] 
of graded $K$-algebras.
\end{claim}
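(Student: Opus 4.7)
The plan is to extend $\phi'$ multiplicatively and to reduce the only substantive issue --- well-definedness on the degree-$4$ piece --- to a non-degeneracy statement for a Poincar\'e duality pairing. By the Lefschetz hyperplane theorem (applicable since $n=\dim(T)\geq 4$), together with the fact already noted that the cohomology algebra of $X$ is generated by $H^2(X,K)$, restriction furnishes canonical isomorphisms $SH^2(X,K)^{\leq 4}\stackrel{\sim}\longrightarrow SH^2(T,K)^{\leq 4}$ and analogously over $X^\sigma, T^\sigma$, so it suffices to produce a graded $K$-algebra isomorphism $\phi : SH^2(T,K)^{\leq 4}\stackrel{\sim}\longrightarrow SH^2(T^\sigma,K)^{\leq 4}$ extending $\phi'$.

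I would set $\phi|_{H^0}=\id$, $\phi|_{H^2(T,K)}=\phi'$, and on decomposable degree-$4$ classes declare $\phi(\sum_j x_j\cup y_j):=\sum_j \phi'(x_j)\cup\phi'(y_j)$; uniqueness is then automatic since $SH^2(T,K)^{\leq 4}$ is generated as an algebra by $H^2(T,K)$. So only well-definedness requires work. Suppose $\sum_j x_j\cup y_j=0$ in $H^4(T,K)$; the task is to show that $w:=\sum_j \phi'(x_j)\cup\phi'(y_j)$ vanishes in $H^4(T^\sigma,K)$. Using that $\phi'$ realises a weak isomorphism of the $n$-fold multilinear intersection forms with scaling factor $\lambda\in K^\ast$, one computes, for all $z_1,\ldots,z_{n-2}\in H^2(T,K)$,
\[
\int_{T^\sigma} w\cup\phi'(z_1)\cup\cdots\cup\phi'(z_{n-2}) \;=\; \lambda\int_T\Bigl(\sum_j x_j\cup y_j\Bigr)\cup z_1\cup\cdots\cup z_{n-2}\;=\;0;
\]
surjectivity of $\phi'$ on $H^2$ then says that $w$ is Poincar\'e-orthogonal to the entire subspace $SH^2(T^\sigma,K)^{2n-4}\subseteq H^{2n-4}(T^\sigma,K)$. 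It therefore remains to check that Poincar\'e duality on $T^\sigma$ restricts to a pairing with trivial left kernel on $SH^2(T^\sigma,K)^4\otimes SH^2(T^\sigma,K)^{2n-4}$.

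For $n\geq 5$ this is easy: the Lefschetz isomorphism $H^4(X^\sigma,K)\stackrel{\sim}\longrightarrow H^4(T^\sigma,K)$ together with the generation of $H^\ast(X^\sigma,K)$ in degree $2$ gives $H^4(T^\sigma,K)=SH^2(T^\sigma,K)^4$, and hard Lefschetz on $T^\sigma$ applied to the hyperplane class upgrades this to $H^{2n-4}(T^\sigma,K)=SH^2(T^\sigma,K)^{2n-4}$. Full Poincar\'e duality on $T^\sigma$ then forces $w=0$.

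The genuinely hard case, and the main obstacle, is $n=4$. There Lefschetz is only injective in degree $4$, so $SH^2(T^\sigma,K)^4$ can sit as a proper subspace of the middle cohomology $H^4(T^\sigma,K)$, and the restriction of Poincar\'e duality to $SH^2(T^\sigma,K)^4\otimes SH^2(T^\sigma,K)^4$ might \emph{a priori} be degenerate. This is precisely where hypothesis (A1) enters: the isotropic classes $\alpha,\beta\in H^{1,1}(Y,\mathbb{Q})$ with $\alpha\cup\beta\neq 0$, pulled back separately to the two $Y$-factors of $Y^\sigma\times Y^\sigma$ and combined with the hyperplane class of $\mathbb{P}^N$ and with the exceptional divisor classes from the blow-up, should supply enough explicit elements of $SH^2(T^\sigma,K)^4$ to witness non-degeneracy of the restricted pairing on $SH^2(T^\sigma,K)^4$ itself. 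Making this non-degeneracy precise by carefully unwinding the blow-up description of $H^\ast(X^\sigma,K)$ and its cup product pairing is what the proof will have to grind through.
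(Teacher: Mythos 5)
Your reduction of the claim to well-definedness in degree four, and your first step (showing that $w:=\sum_j\phi'(x_j)\cup\phi'(y_j)$ is Poincar\'e-orthogonal on $T^\sigma$ to all of $SH^2(T^\sigma,K)^{2n-4}$), match the paper, and your argument disposing of the case $n\geq 5$ is correct. But the case $n=4$ --- which is the one the paper actually needs later, since Theorem \ref{thm1} is built on the fourfolds $T_{g,4}$ --- is left as a genuine gap: you only assert that (A1) ``should supply enough explicit elements'' to witness non-degeneracy of the pairing restricted to $SH^2(T^\sigma,K)^4$, and defer the verification. Moreover (A1) is the wrong tool here: in the paper it enters only later, in Claim \ref{claim:H2}, to characterize $H^2(Y\times Y,K)$ intrinsically inside $SH^2(X,K)^{\leq 4}$; it plays no role in the proof of Claim \ref{claim:phi}.

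The missing idea is to lift the computation from $T^\sigma$ to the ambient blow-up $X^\sigma$, whose cohomology algebra is generated in degree two. Since $\dim(T)\geq 4$, every class in $SH^2(T^\sigma,K)$ of degree $\leq 4$ is the restriction of a unique class on $X^\sigma$, and on such products $\int_{T^\sigma}(-)$ equals $\int_{X^\sigma}(-)\cup[T^\sigma]$. Your orthogonality statement therefore becomes $w\cup\eta\cup[T^\sigma]=0$ in the top cohomology $H^{2N+8}(X^\sigma,K)$ for all $\eta\in SH^2(X^\sigma,K)^{2n-4}=H^{2n-4}(X^\sigma,K)$, the last equality holding because $H^\ast(X^\sigma)$ is generated in degree two. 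Poincar\'e duality on $X^\sigma$ then gives $w\cup[T^\sigma]=0$, and writing $[T^\sigma]=\omega^{N+4-n}$ for a hyperplane class $\omega$ on $X^\sigma$, Hard Lefschetz (injectivity of $\cup\,\omega^{N+4-n}$ on $H^4(X^\sigma,K)$, which holds precisely because $n\geq 4$) yields $w=0$. This argument is uniform in $n\geq 4$ and replaces both your $n\geq 5$ case and your unfinished $n=4$ case; no non-degeneracy of the restricted pairing on $T^\sigma$ itself needs to be established by hand. You should also record, as the paper does, that the same argument applied to $\phi'^{-1}$ produces a well-defined inverse, so that $\phi$ is indeed an isomorphism and not merely a homomorphism.
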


\begin{proof}
In degree two, we define $\phi$ to coincide with $\phi'$ from (\ref{eq1:prop:psi}).
Since the respective algebras are generated in degree two, this determines $\phi$ uniquely as homomorphism of $K$-algebras; we have to check that it is well-defined though.
In order to see the latter, let $\alpha_1,\ldots ,\alpha_r$ and $\beta_1,\ldots, \beta_r$ be elements in $H^2(T,K)$.
Then we have to prove:
\begin{align*} 
\sum_i \alpha_i\cup \beta_i =0 \ \ \Rightarrow \ \ \sum_i \phi'(\alpha_i)\cup \phi'(\beta_i) =0 .
\end{align*}
Let us assume that $\sum_i \alpha_i\cup \beta_i =0$.
Since $\phi'$ induces a weak isomorphism between the corresponding intersection forms, this implies
\[
\sum_i \phi'(\alpha_i)\cup \phi'(\beta_i) \cup \eta=0 \ \ \text{in}\ \ H^{2n}(T^\sigma,K) ,
\]
for all $\eta\in SH^2(T^\sigma,K)^{2n-4}$.
The class $\sum_i \phi'(\alpha_i)\cup \phi'(\beta_i) \cup \eta$ lies in $SH^2(T^\sigma,K)$ and hence it is a pullback of a class on $X$. 
Therefore, the above condition is equivalent to saying that
\[
\sum_i \phi'(\alpha_i)\cup \phi'(\beta_i)\cup \eta \cup [T^\sigma]=0 \ \ \text{in}\ \ H^{2N+8}(X^\sigma,K) ,
\]
for all $\eta\in SH^2(X^\sigma,K)^{2n-4}$. 
Since the cohomology of $X$ is generated by degree two classes, Poincar\'e duality shows
\[
\sum_i \phi'(\alpha_i)\cup \phi'(\beta_i)\cup [T^\sigma]=0 \ \ \text{in}\ \ H^{2N-2n+12}(X^\sigma,K) .
\] 
Since $[T^\sigma]$ is the $(N+4-n)$-th power of some hyperplane class on $X^\sigma$, the Hard Lefschetz theorem implies
\[
\sum_i \phi'(\alpha_i)\cup \phi'(\beta_i)=0 \ \ \text{in}\ \ H^{4}(X^\sigma,K) ,
\]
as we wanted.
Similarly, one proves that $\phi'^{-1}$ induces a well-defined inverse of $\phi$. 
This finishes the proof of the claim.
\end{proof}

From now on, we will work with the isomorphism $\phi$ of $K$-algebras from Claim \ref{claim:phi} instead of the weak isomorphism of intersection forms $\phi'$ from (\ref{eq1:prop:psi}).

In order to describe the degree two cohomology of $X$, we denote by $D_i\subseteq X$ the exceptional divisor above $Z_i$ and we denote by $h$ the pullback of the hyperplane class of $\CP^N$ to $X$.
Then, by Lemma \ref{lem:blow-up}:
\begin{equation} \label{eq:thm0:H^2X}
H^2(X,K)= \left( \bigoplus_{i=1}^5 [D_i]\cdot K \right) \oplus H^2(Y\times Y,K)\oplus h\cdot K . 
\end{equation}
Similarly, we denote by $D_i^\sigma\subseteq X^\sigma$ the conjugate of $D_i$ by $\sigma$ and we denote by $h^\sigma$ the pullback of the hyperplane class of $\CP^N$ to $X^\sigma$. 
This yields:
\begin{equation} \label{eq:thm0:H^2Xsigma}
H^2(X^\sigma,K)= \left( \bigoplus_{i=1}^5 [D_i^\sigma]\cdot K \right) \oplus H^2(Y^\sigma\times Y^\sigma,K)\oplus h^\sigma\cdot K .  
\end{equation}
Next, we pick a base point $0\in Y$ and consider the projections 
\[
Y\times Y\longrightarrow  Y\times 0\ \ \text{and}\ \ Y\times Y\longrightarrow  0\times Y .
\]
Using pullbacks, this allows us to view $H^\ast(Y\times 0,K)$ and $H^\ast(0\times Y,K)$ as subspaces of $H^\ast(Y\times Y,K)$.
By assumption, the first Betti number of $Y$ vanishes and so we have a canonical identity 
\begin{align} \label{eq:H^2(YxY)}
H^2(Y\times Y,K)=H^2(Y\times 0,K) \oplus H^2(0\times Y,K) ,
\end{align}
of subspaces of $H^2(X,K)$.
A similar statement holds on $X^\sigma$.

\begin{claim} \label{claim:H2}
The isomorphism $\phi$ respects the decompositions in (\ref{eq:thm0:H^2X}) and (\ref{eq:thm0:H^2Xsigma}), that is:
\begin{align}
	\phi(H^2(Y\times Y,K))&=H^2(Y^\sigma\times Y^\sigma,K) , \label{eq1:claim:H2} \\
	\phi([D_i]\cdot K)&=[D_i^\sigma]\cdot K \ \ \text{for all $i=1,\ldots ,5$} ,\label{eq2:claim:H2} \\
	\phi(h\cdot K)&=h^\sigma \cdot K . \label{eq3:claim:H2}
\end{align}
\end{claim}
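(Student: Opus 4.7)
The plan is to characterize each summand on the right-hand side of (\ref{eq:thm0:H^2X}) by properties purely intrinsic to the graded $K$-algebra $SH^2(X,K)^{\leq 4}$, so that the ring isomorphism $\phi$ from Claim \ref{claim:phi} must preserve them, and then to match the individual lines $[D_i]\cdot K$ with $[D_i^\sigma]\cdot K$ using the distinct geometric roles of the five $Z_i$. Two structural inputs are central. By Lemma \ref{lem:mult}, every cup product of a pullback class $\pi^\ast(\eta)$ with $[D_i]$ is a pushforward from $D_i$ and depends only on the restriction of $\eta$ to $Z_i$. Since the $D_i$ were chosen pairwise disjoint, one also has $[D_i]\cup[D_j]=0$ in $H^4(X,K)$ for all $i\neq j$, and the analogous statement holds on $X^\sigma$.

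For (\ref{eq1:claim:H2}) I would first give an intrinsic algebraic description of the pullback subspace $\Pi := H^2(Y\times Y,K)\oplus h\cdot K \subseteq H^2(X,K)$. Using Lemma \ref{lem:mult} together with the projective-bundle structure of each $D_i$, membership in $\Pi$ can be encoded entirely in $SH^2(X,K)^{\leq 4}$: roughly, $\alpha\in\Pi$ is characterized by the factorization of each product $\alpha\cup[D_i]^2$ through pushforwards from $D_i$ of classes of the form $h_i\cup p_i^\ast(-)$, conditions that are witnessed by further cup products landing in $H^4$. Since $\phi$ preserves all such conditions, $\phi(\Pi)=\Pi^\sigma$. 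The complement $\bigoplus[D_i]\cdot K$ is then characterized by the existence of a direct-sum decomposition into five nonzero pairwise orthogonal lines (in the $H^4$-valued pairing) that $\Pi$ does not admit, which forces $\phi\bigl(\bigoplus[D_i]\cdot K\bigr)=\bigoplus[D_i^\sigma]\cdot K$; this yields (\ref{eq2:claim:H2}) and (\ref{eq3:claim:H2}) up to a permutation of indices, which we absorb by relabelling the $D_i^\sigma$.

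To isolate each individual summand I would exploit the differing projections of the $Z_i$. Since $Z_1,\ldots,Z_4$ map to single points of $\CP^N$ while $Z_5$ does not, Lemma \ref{lem:mult} gives $h\cup[D_i]=0$ for $i\leq 4$ and $h\cup[D_5]\neq 0$; this simultaneously singles out $[D_5]\cdot K$ among the exceptional lines and, dually, characterizes $h\cdot K$ as the unique line in $\Pi$ not annihilating $[D_5]$, establishing (\ref{eq3:claim:H2}). An analogous argument, using that $Z_1$ projects to a point in the second copy of $Y$ whereas $Z_2,Z_3,Z_4$ are graphs of automorphisms of $Y$, distinguishes $[D_1]\cdot K$ from the three graph lines via vanishing of products with classes in $H^2(0\times Y,K)\oplus h\cdot K$; the line $[D_2]\cdot K$ is then pinned down by the diagonal behaviour of $\Gamma_{\id}$ on $H^2(Y\times Y,K)$, leaving a harmless relabelling ambiguity between $D_3$ and $D_4$ that will later be resolved in the proof of Proposition \ref{prop:psi} using (A2). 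The main obstacle I anticipate is making the intrinsic characterization of $\Pi$ airtight in the borderline case $\dim(T)=4$, where only products into $H^4$ are available as witnesses; this is precisely where assumption (A1) enters, since the isotropic classes $\alpha,\beta\in H^{1,1}(Y,\Q)$ with $\alpha\cup\beta\neq 0$ provide, via the two projections $Y\times Y\to Y$, enough low-degree test classes on $Y\times Y$ to detect every exceptional contribution and rule out degenerate cancellations.
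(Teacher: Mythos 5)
Your overall strategy --- characterize the summands of (\ref{eq:thm0:H^2X}) intrinsically inside the truncated algebra, then separate the five exceptional lines using the differing geometry of the $Z_i$ --- is the right one, and several of your observations (the vanishing of $h\cup[D_i]$ for $i\le 4$ versus $h\cup[D_5]\neq 0$, the role of (A1) in low degree, the need for (A2) to separate $D_3$ from $D_4$) match the paper. But two steps have genuine gaps. First, your intrinsic description of $\Pi=H^2(Y\times Y,K)\oplus h\cdot K$ is both circular and out of range: it quantifies over the classes $[D_i]$, which are exactly what you have not yet identified on the target side, and it invokes products $\alpha\cup[D_i]^2$ of degree $6$, which do not exist in $SH^2(X,K)^{\leq 4}$ --- the only products $\phi$ is known to preserve are those of two degree-two classes landing in $H^4$. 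The paper's characterization avoids both problems: let $S\subseteq H^2(X,K)$ be the span of all classes of square zero (this forces $S\subseteq H^2(Y\times Y,K)$ by Lemma \ref{lem:mult}); by (A1) the products of elements of $S$ span a subspace $S^2\subseteq H^4$ containing $H^4(Y\times 0,K)$ and $H^4(0\times Y,K)$, and $H^2(Y\times Y,K)$ is then exactly the span of the classes whose square lies in $S^2$. Lefschetz $(1,1)$ transfers (A1) to $Y^\sigma$, so this description is available on both sides and yields (\ref{eq1:claim:H2}) directly, without first identifying $\Pi$.

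Second, characterizing $\bigoplus[D_i]\cdot K$ as ``a complement of $\Pi$ admitting a decomposition into five pairwise orthogonal lines'' does not pin anything down: lines such as $([D_1]+\alpha)\cdot K$ with suitable $\alpha$ in the pullback part can also be arranged to be pairwise orthogonal, so this property does not force $\phi([D_i])$ to have vanishing component along $h^\sigma$ and $H^2(Y^\sigma\times Y^\sigma,K)$. The decisive ingredient in the paper is a kernel-dimension count (Lemmas \ref{lem7} and \ref{lem8}): the maps $\cup\alpha$, $\cup h$, $\cup[D_1],\dots,\cup[D_5]$ on $H^2(Y\times Y,K)$ have images in direct sum, each $F_j=\ker\cup[D_j]$ has dimension exactly $b_2(Y)$, $\cup h$ is injective, and $\dim\ker\cup\alpha<b_2(Y)$ for every nonzero $\alpha\in H^2(Y\times Y,K)$; hence the kernel of cup product with a linear combination is the intersection of the kernels of the summands with nonzero coefficient, and the only combinations whose kernel on $H^2(Y^\sigma\times Y^\sigma,K)$ has dimension $b_2(Y)$ are the pure multiples of a single $[D_j^\sigma]$. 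This is what produces the permutation $\rho$, after which $\rho(5)=5$ follows from your $h\cup[D_5]\neq0$ observation, $\rho(1)=1$ from $F_2\cap F_3\cap F_4\neq 0$ versus $F_1\cap F_j=0$, and $\rho|_{\{2,3,4\}}=\id$ from the eigenvalue comparison of the endomorphisms $g_{j,k}$ via (A2) and Lemma \ref{lem:eigenvalues}. Note also that the $D_3$/$D_4$ ambiguity is not a harmless relabelling: $D_3^\sigma$ and $D_4^\sigma$ are the conjugates of the graphs of $f$ and $f'$ specifically, and swapping them would break (P2), so this step must be closed inside the proof of the claim rather than deferred.
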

\begin{proof}
In order to prove (\ref{eq1:claim:H2}), we define $S$ to be the linear subspace of $H^2(X,K)$ which is spanned by all classes whose square is zero.
By the ring structure of the cohomology of blow-ups (cf.\ Lemma \ref{lem:mult}), $S$ is contained in $H^2(Y\times Y,K)$.
Furthermore, let $S^2$ be the subspace of $H^4(X,K)$ which is given by products of elements in $S$.
By assumption (A1), this subspace contains $H^4(Y\times 0,K)$ and $H^4(0\times Y,K)$.
By the ring structure of the cohomology of $X$, it then follows that $H^2(Y\times Y,K)$ in (\ref{eq:thm0:H^2X}) is equal to the linear subspace of $H^2(X,K)$ that is spanned by those classes whose square lies in $S^2$.

By Lefschetz's theorem on $(1,1)$-classes, the cohomology of $Y^\sigma$ also satisfies (A1).
Hence, $H^2(Y^\sigma \times Y^\sigma,K)$ inside $SH^2(X^\sigma,K)^{\leq 4}$ has a similar intrinsic description as we have found for $H^2(Y\times Y,K)$ inside $SH^2(X,K)^{\leq 4}$.
This proves (\ref{eq1:claim:H2}).

In order to prove (\ref{eq2:claim:H2}) and (\ref{eq3:claim:H2}), we need the following Lemma, also used in \cite{charles,voisin:inv}.
In order to state it, we define for $i=1,\ldots ,5$ the following kernels:
\begin{align} \label{eq:F_i}
F_i:=\ker \left(\cup [D_i]:H^2(Y\times Y,K)\longrightarrow H^4(X,K)\right) .
\end{align}
Using Lemma \ref{lem:blow-up} and \ref{lem:mult}, we obtain the following Lemma, which is the analogue of Charles's Lemma 7 in \cite{charles}.

\begin{lemma} \label{lem7}
Using the identification (\ref{eq:H^2(YxY)}), the kernels $F_i\subseteq H^2(Y\times Y,K)$ are given as follows:
\begin{align}
F_1&= \left\{(0 ,\beta): \beta \in H^2(Y,K)\right\}, \label{eq1:lem7}\\
F_2&=\left\{(\beta , -\beta): \beta \in H^2(Y,K)\right\}, \label{eq2:lem7} \\
F_3&=\left\{(f^\ast\beta, -\beta): \beta \in H^2(Y,K)\right\}, \label{eq3:lem7} \\
F_4&=\left\{({f^\prime}^\ast\beta,-\beta): \beta \in H^2(Y,K)\right\}, \label{eq4:lem7} \\
F_5&=\left\{(\beta,0): \beta \in H^2(Y,K)\right\}. \label{eq5:lem7}
\end{align}
\end{lemma}

In addition to the above lemma, we have as in \cite{charles} the following.

\begin{lemma} \label{lem8}
Let $\alpha\in H^2(Y\times Y,K)$ be a non-zero class. 
Then the images of
\[
\cup\alpha,\cup h,\cup[D_1],\ldots ,\cup[D_5]:H^2(Y\times Y,K)\longrightarrow H^4(X,K)
\]
are in direct sum, $\cup h$ is injective and
\begin{align} \label{eq:ker(cupalpha)}
 \dim(\ker\cup \alpha)< b_2(Y) .
\end{align}
\end{lemma}

\begin{proof}
Apart from (\ref{eq:ker(cupalpha)}), the assertions in Lemma \ref{lem8} are immediate consequences of the ring structure of the cohomology of blow-ups, see Lemma \ref{lem:blow-up} and \ref{lem:mult}.

In order to proof (\ref{eq:ker(cupalpha)}), we write
\[
\alpha =\alpha_1+\alpha_2
\]
according to the decomposition (\ref{eq:H^2(YxY)}).
Without loss of generality, we assume $\alpha_1\neq 0$.
Then, $\cup\alpha$ restricted to $H^2(0\times Y,K)$ is injective.
Moreover, by Poincar\'e duality there is some $\beta_1\in H^2(Y\times 0,K)$ with 
\[
\beta_1 \cup \alpha_1 \neq 0
\]
Then, $\beta_1 \cup \alpha$ is nontrivial and does not lie in the image of $\cup\alpha$ restricted to $H^2(0\times Y,K)$.
Thus, $ \dim(\im (\cup \alpha))> b_2(Y)$ and (\ref{eq:ker(cupalpha)}) follows.
\end{proof}

Of course, the obvious analogues of Lemma \ref{lem7} and \ref{lem8} hold on $X^\sigma$.

Note the following elementary fact from linear algebra.
If a finite number of linear maps $l_1,\ldots ,l_r$ between two vector spaces have images in direct sum, then the kernel of a linear combination $\sum \lambda_il_i$ is given by intersection of all $\ker(l_i)$ with $\lambda_i\neq 0$.

By Lemma \ref{lem7}, each $F_i$ has dimension $b_2(Y)$ and hence the above linear algebra fact together with Lemma \ref{lem8} shows that there is a permutation $\rho \in \Sym(5)$ with
\[
\phi\left([D_i]\cdot K\right)=[D_{\rho(i)}^\sigma] \cdot K .
\]

We are now able to prove (\ref{eq3:claim:H2}).
For some real numbers $a_0,\ldots,a_5$ and for some class $\beta^\sigma \in H^2(Y^\sigma\times Y^\sigma,K)$ we have
\[
\phi(h)=a_0h^\sigma+\sum_{j=1}^5 a_j[D_j^\sigma]+\beta^\sigma .
\]
For $i=1,\ldots ,4$, the cup product $h\cup [D_i]$ vanishes and hence
\[
a_0h^\sigma\cup [D_{\rho(i)}^\sigma]+\sum_{j=1}^5 a_j[D_j^\sigma]\cup [D_{\rho(i)}^\sigma]+\beta^\sigma\cup [D_{\rho(i)}^\sigma]=0 .
\]
Since the cup product $[D_j^\sigma]\cup [D_k^\sigma]$ vanishes for $j\neq k$, we deduce 
\[
a_0h^\sigma\cup [D_{\rho(i)}^\sigma]+ a_{\rho(i)}[D_{\rho(i)}^\sigma]^2+\beta^\sigma\cup [D_{\rho(i)}^\sigma]=0 
\]
for all $i=1,\ldots ,4$.
From Lemma \ref{lem:blow-up}, it follows that $a_{\rho(i)}$ vanishes for all $i=1,\ldots 4$.

If $i$ is such that $\rho(i)\in \left\{1,\ldots 4\right\}$, then 
\[
h^\sigma\cup [D_{\rho(i)}^\sigma]=0\ \ \text{and so}\ \  \beta^\sigma\cup [D_{\rho(i)}^\sigma]=0 .
\]
By Lemma \ref{lem7}, the intersection $\bigcap_{j\neq k}F_j$ is zero for each $k=1,\ldots ,5$.
Since the same holds on $X^\sigma$, we deduce that $\beta^\sigma$ vanishes.
Hence,
\[
\phi(h)=a_0h^\sigma+a_{\rho(5)}[D_{\rho(5)}^\sigma] .
\]

In $H^4(X,K)$ we have the identity
\[
h\cup [D_5]= (i^\ast h) \cup [D_5] \in H^2(Y\times Y)\cup [D_5] ,
\]
and similarly on $X^\sigma$.
Since (\ref{eq1:claim:H2}) is already proven, we deduce
\[
a_0h^\sigma\cup [D_{\rho(5)}^\sigma]+a_{\rho(5)}[D_{\rho(5)}^\sigma]^2 \in H^2(Y^\sigma \times Y^\sigma )\cup [D_{\rho(5)}^\sigma] .
\]
This implies $a_{\rho(5)}=0$.
Since $\phi$ is an isomorphism, $a_0\neq 0$ follows, which proves (\ref{eq3:claim:H2}).

It remains to prove (\ref{eq2:claim:H2}).
That is, we need to see that $\rho\in \Sym(5)$ is the identity.
This will be achieved by a similar argument as in \cite[Lem.\ 11]{charles}.

Note that $h\cup[D_i]$ as well as $h^\sigma\cup[D_i^\sigma]$ vanish for $i\neq 5$ and are nontrivial for $i=5$.
Since (\ref{eq3:claim:H2}) is already proven, $\rho(5)=5$ follows.

By assumption on $Y$, $f^\ast$ and $f'^\ast$ fix $i^\ast h$.
Therefore, the intersection $F_2\cap F_3 \cap F_4$ is nontrivial.
Conversely, $F_1\cap F_i=0$ for all $i=2,3,4$.
Since analogue statements hold on $X^\sigma$, we obtain $\rho(1)=1$.

Next, we use that $F_i\oplus F_j = H^2(Y\times Y,K)$ for all $i=1,5$ and $j=2,3,4$.
This allows us to define for $2\leq j,k \leq 4$ endomorphisms $g_{j,k}$ of $F_1$ via the following composition:
\[
g_{j,k}:F_1 \hookrightarrow F_5\oplus F_j \overset{pr_1}{\longrightarrow} F_5 \hookrightarrow F_1\oplus F_k \overset{pr_1}{\longrightarrow} F_1 .
\]
There is a canonical identification between $F_1$ and $H^2(Y,K)$.
Using Lemma \ref{lem7}, a straightforward calculation then shows:
\begin{align} \label{eq:g_jk}
g_{3,2}=f^\ast,\ \ g_{4,2}=f'^\ast,\ \ g_{4,3}=(f'\circ f^{-1})^\ast , \ \ g_{j,j}=\id \ \ \text{and}\ \ g_{j,k}=g_{k,j}^{-1} ,
\end{align}
for all $2\leq j,k \leq 4$.

Analogue to (\ref{eq:F_i}), we define
\[
F_i^\sigma:=\ker\left(\cup[D_i^\sigma]:H^2(Y^\sigma\times Y^\sigma ,K) \longrightarrow H^4(X^\sigma,K)\right) .
\]
These subspaces are described by the corresponding statements of Lemma \ref{lem7}.
Thus, the above construction yields for any $2\leq j,k \leq 4$ endomorphisms $g_{j,k}^\sigma$ of $F_1^\sigma$.
Using the canonical identification of $F_1^\sigma$ with $H^2(Y^\sigma,K)$, these endomorphisms are given by
\begin{align} \label{eq:gsigma_jk}
g_{3,2}^\sigma=(f^\sigma)^\ast,\ \ g_{4,2}^\sigma=(f'^\sigma)^\ast,\ \ g_{4,3}^\sigma={(f'\circ f^{-1})^\sigma}^\ast , \ \ g_{j,j}^\sigma=\id \ \ \text{and}\ \ g_{j,k}^\sigma =(g_{k,j}^\sigma)^{-1} ,
\end{align}
for all $2\leq j,k \leq 4$.

Since $\phi$ maps $[D_1]$ to a multiple of $[D_1^\sigma]$, it follows that the restriction of $\phi$ to $F_1$ induces a $K$-linear isomorphism 
\begin{align} \label{eq:psi:H2}
\psi: F_1=H^2(Y,K)\stackrel{\sim}\longrightarrow  H^2(Y^\sigma,K)=F_1^\sigma .
\end{align}
Since $\phi$ maps $F_i$ isomorphically to $F_{\rho(i)}^\sigma$, the above isomorphism satisfies
\begin{align} \label{eq:psi_g_jk}
\psi\circ g_{j,k}=g_{\rho(j),\rho(k)}^\sigma \circ \psi
\end{align}
for all $2\leq j,k \leq 4$.

We now denote the eigenvalues of $g_{j,k}$ by $\Eig(g_{j,k})$, and similarly for $g_{j,k}^\sigma$.
Since $f$ and $f'$ are automorphisms, it follows from (A2) and (\ref{eq:g_jk}) that $\Eig(g_{3,2})$ and $\Eig(g_{4,2})$ are distinct $\Aut(\C)$-invariant sets of roots of unity.
By Lemma \ref{lem:eigenvalues} and since $g_{j,k}=g_{k,j}^{-1}$, we deduce:
\begin{align*}
\Eig(g_{3,2})=\Eig(g_{2,3})=\Eig(g_{3,2}^\sigma)=\Eig(g_{2,3}^\sigma) ,\\ 
\Eig(g_{4,2})=\Eig(g_{2,4})=\Eig(g_{4,2}^\sigma)=\Eig(g_{2,4}^\sigma).
\end{align*} 
Since $g_{4,3}=g_{2,3}\circ g_{4,2}$ and $g_{3,4}=g_{2,4}\circ g_{3,2}$, it also follows that each of the sets $\Eig(g_{3,4})$, $\Eig(g_{4,3})$, $\Eig(g_{3,4}^\sigma)$ and $\Eig(g_{4,3}^\sigma)$ is distinct from $\Eig(g_{2,3})$ and $\Eig(g_{4,2})$.
Therefore, (\ref{eq:psi_g_jk}) implies that $\rho$ respects the subsets $\left\{2,3\right\}$ and $\left\{2,4\right\}$.
Hence, $\rho=\id$, as we wanted.
This finishes the proof of Claim \ref{claim:H2}.
\end{proof}

Since $b_1(Y)=0$ and $\dim(Y)=2$, the cohomology algebra $H^\ast(0\times Y,K)$ is a subalgebra of $SH^2(X,K)^{\leq 4}$.
Restriction of $\phi$ therefore extends the $K$-linear isomorphism $\psi$ from (\ref{eq:psi:H2}) to an isomorphism
\begin{align} \label{eq:psi}
\psi: H^\ast(Y,K)\stackrel{\sim}\longrightarrow  H^\ast(Y^\sigma,K)
\end{align}
of graded $K$-algebras which we denote with the same letter.
Since $\rho$ in the proof of Claim \ref{claim:H2} is the identity, it follows from (\ref{eq:g_jk}), (\ref{eq:gsigma_jk}) and (\ref{eq:psi_g_jk}) that $\psi$ satisfies (P2).

In order to prove (P1), we note that
\begin{align*} 
\ker\left(\cup [D_5]: F_1 \oplus h\cdot K \longrightarrow H^4(X,K)\right)=(i^\ast h-h)\cdot K ,
\end{align*}
where $i^\ast h\in F_1=H^2(0\times Y,K)$.
A similar statement holds on $X^\sigma$.
Since $\phi$ maps $F_1$ to $F_1^\sigma$, $[D_5]\cdot K$ to $[D_5^\sigma]\cdot K$ and $h\cdot K$ to $h^{\sigma}\cdot K$, it follows that $\phi$ maps $i^\ast h\cdot K$ to ${i^\sigma}^\ast h^{\sigma} \cdot K$.
This finishes the proof of Proposition \ref{prop:psi}.

\subsection{Proof of Proposition \ref{prop:psi:(1,1)}} \label{subsec:proof:prop:psi:(1,1)}
As in the proof of Proposition \ref{prop:psi}, we use (\ref{eq:Lef}) in order to identify classes of degree $\leq n$ on $T$ with classes on $X$.
Further, $SH^{1,1}(-,K)$ denotes the subalgebra of $H^\ast(-,K)$ that is generated by $H^{1,1}(-,K)$; its quotient by elements of degree $\geq r+1$ is denoted by $SH^{1,1}(-,K)^{\leq r}$.

Let us now suppose that there is a $K$-linear isomorphism
\begin{align}
\phi': H^{1,1}(T,K)\stackrel{\sim} \longrightarrow H^{1,1}(T^\sigma , K) ,
\end{align}
which induces a weak isomorphism between the respective intersection forms.
Then we have the following analogue of Claim \ref{claim:phi} in the proof of Proposition \ref{prop:psi}:

\begin{claim} \label{claim:xi}
The isomorphism from (\ref{eq1:prop:psi}) induces a unique isomorphism 
\[
\phi:SH^{1,1}(X,K)^{\leq 4} \stackrel{\sim}\longrightarrow SH^{1,1}(X^\sigma, K)^{\leq 4}
\] 
of graded $K$-algebras.
\end{claim}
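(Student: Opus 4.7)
The plan is to mimic the proof of Claim \ref{claim:phi} step by step, substituting $H^{1,1}$ and $SH^{1,1}$ for $H^2$ and $SH^2$ throughout. The only genuine novelty lies at the very end, where the identity $H^\ast(X^\sigma,K) = SH^2(X^\sigma,K)$ used there must be replaced by its $(p,p)$-graded refinement.

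First I would define $\phi$ in bidegree $(1,1)$ by $\phi'$ via the Lefschetz identification $H^{1,1}(X,K)\cong H^{1,1}(T,K)$ and extend multiplicatively; uniqueness is automatic since $SH^{1,1}(X,K)^{\leq 4}$ is by construction generated in bidegree $(1,1)$. For well-definedness I would suppose $\sum_i \alpha_i \cup \beta_i = 0$ in $SH^{1,1}(T,K)^4$ with $\alpha_i,\beta_i \in H^{1,1}(T,K)$, set $\omega := \sum_i \phi'(\alpha_i)\cup \phi'(\beta_i)$, and aim at $\omega = 0$ in $H^{2,2}(T^\sigma,K)$. Expanding any $\eta \in SH^{1,1}(T^\sigma,K)^{2n-4}$ as a sum of $(n-2)$-fold cup products of $\phi'$-images of $H^{1,1}(T,K)$-classes, the weak-isomorphism hypothesis on the $n$-linear intersection form yields
\[
\int_{T^\sigma} \omega \cup \eta = 0,
\]
and since $\phi'$ is a $K$-linear bijection on $H^{1,1}$, this covers every $\eta \in SH^{1,1}(T^\sigma,K)^{2n-4}$. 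Transferred to $X^\sigma$ via the projection formula, this reads
\[
\omega \cup \eta \cup [T^\sigma] = 0 \quad \text{in } H^{2N+8}(X^\sigma,K)
\]
for all $\eta \in SH^{1,1}(X^\sigma,K)^{2n-4}$.

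To conclude $\omega = 0$, I would observe that $H^{\ast,\ast}(X^\sigma,\C)$ is generated in bidegree $(1,1)$ as a $\C$-algebra: on $\CP^N$ this is classical, on the surface $Y^\sigma$ the cup product $H^{1,1}\otimes H^{1,1}\to H^{2,2}$ is surjective by Poincar\'e duality, this property passes to $Y^\sigma\times Y^\sigma\times \CP^N$ via K\"unneth, and it is preserved by blow-ups of smooth centres sharing it. Combined with the closure of $K$ under complex conjugation -- which guarantees that $H^{p,p}(X^\sigma,K)$ $\C$-spans $H^{p,p}(X^\sigma,\C)$ and that the natural generators can be chosen $K$-rationally -- this yields the identity $H^{p,p}(X^\sigma,K) = SH^{1,1}(X^\sigma,K)^{2p}$ for every $p$. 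The Hodge-graded Poincar\'e pairing is therefore non-degenerate on these spaces, so the vanishing displayed above forces $\omega \cup [T^\sigma] = 0$ in $H^{N+6-n,N+6-n}(X^\sigma,K)$; since $[T^\sigma]$ is a power of a hyperplane class, Hard Lefschetz -- which respects bidegree -- finally gives $\omega = 0$ in $H^{2,2}(X^\sigma,K)$. The same argument applied to $(\phi')^{-1}$ produces a two-sided inverse, completing the proof.

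The main obstacle is precisely the identity $H^{p,p}(X^\sigma,K) = SH^{1,1}(X^\sigma,K)^{2p}$: the $\C$-counterpart is a routine K\"unneth and blow-up computation, but the $K$-rational descent is where the hypothesis that $K$ is closed under complex conjugation genuinely enters, and it is the one place where the analogy with Claim \ref{claim:phi} becomes non-trivial.
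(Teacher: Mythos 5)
Your reduction to the statement ``$\sum_i \phi'(\alpha_i)\cup\phi'(\beta_i)\cup\eta\cup[T^\sigma]=0$ for all $\eta\in SH^{1,1}(X^\sigma,K)^{2n-4}$ implies $\sum_i\phi'(\alpha_i)\cup\phi'(\beta_i)=0$'' matches the paper, but the way you then conclude has a genuine gap: the identity $H^{p,p}(X^\sigma,K)=SH^{1,1}(X^\sigma,K)^{2p}$ is false for the varieties at hand. The surfaces $Y_g$ have geometric genus $g\geq 1$, so $H^{2,0}(Y^\sigma)\neq 0$, and the K\"unneth decomposition of $H^{2,2}(Y^\sigma\times Y^\sigma)$ contains the summand $H^{2,0}(Y^\sigma)\otimes H^{0,2}(Y^\sigma)$, which is not in the image of $H^{1,1}(Y^\sigma\times Y^\sigma)^{\otimes 2}$ (products of $(1,1)$-classes of the product only reach $H^{2,2}\otimes H^{0,0}\oplus H^{1,1}\otimes H^{1,1}\oplus H^{0,0}\otimes H^{2,2}$). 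So surjectivity of $H^{1,1}\otimes H^{1,1}\to H^{2,2}$ on each factor does \emph{not} pass to the product, and $SH^{1,1}(X^\sigma,K)^{2p}$ is a proper subspace of $H^{p,p}(X^\sigma,K)$. Consequently your appeal to non-degeneracy of the Poincar\'e pairing breaks down: pairing a $(2,2)$-class against only the subalgebra $SH^{1,1}$ cannot by itself force it to vanish, since that subalgebra may fail to separate points of $H^{2,2}$. This is precisely the point where the degree-$2$ argument of Claim \ref{claim:phi} (where the \emph{full} cohomology of $X$ is generated in degree $2$) does not transfer.

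The paper gets around this differently: it takes the class $\delta:=\sum_i\phi'(\alpha_i)\cup\phi'(\beta_i)$, which \emph{does} lie in $SH^{1,1}$, writes its Lefschetz decomposition $\delta=\delta_0\,\omega^2+\delta_1\cup\omega+\delta_2$ with respect to the hyperplane class $\omega$ satisfying $[T^\sigma]=\omega^{N+4-n}$, observes that each primitive component $\delta_j$ is a $K$-rational $(j,j)$-class (and $\delta_2\in SH^{1,1}$), and then tests against the specific classes $\eta_j=\overline{\delta_j}\cdot\omega^{n-2-j}$. The stability of $K$ under complex conjugation enters exactly here, to guarantee $\eta_j$ is $K$-rational and lies in $SH^{1,1}(X^\sigma,K)^{2n-4}$; the Hodge--Riemann bilinear relations then force each $\delta_j$ to vanish. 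You should replace your generation claim by this primitive-decomposition argument.
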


\begin{proof}
As in the proof of Claim \ref{claim:phi}, this claim reduces to showing the following:
Suppose we have $K$-rational $(1,1)$-classes $\alpha_1,\ldots ,\alpha_r$ and $\beta_1,\ldots, \beta_r$ on $T$ such that
\begin{align} \label{eq:claim:xi}
\sum_i \phi'(\alpha_i)\cup \phi'(\beta_i)\cup \eta \cup [T^\sigma]=0 \ \ \text{in}\ \ H^{2N+8}(X^\sigma,K) ,
\end{align}
for all $\eta\in SH^{1,1}(X^\sigma,K)^{2n-4}$.
Then, $\sum_i \phi'(\alpha_i)\cup \phi'(\beta_i)$ vanishes.

In order to prove the latter, let $\omega$ be the hyperplane class on $X^\sigma$ with $[T^{\sigma}]=\omega^{N+4-n}$.
With respect to this Kähler class we obtain a decomposition into primitive pieces:
\[
\sum_i \phi'(\alpha_i)\cup \phi'(\beta_i)= \delta_0\cdot \omega^2+\delta_1\cup \omega +\delta_2 ,
\] 
where $\delta_j \in H^{j,j}(X,\C)_{pr}$.
Since $\omega$ is an integral class, it follows that $\delta_j$ lies in $H^{j,j}(X,K)_{pr}$.
The above identity then shows $\delta_2 \in SH^{1,1}(X,K)$.

At this point, we use the assumption in Proposition \ref{prop:psi:(1,1)} which ensures that $K$ is stable under complex conjugation.
Indeed, this assumption allows us to choose for $j=0,1,2$ the following $K$-rational classes:
\[
\eta_j:= \overline {\delta_j} \cdot \omega^{n-2-j} \in SH^{1,1}(X^\sigma,K)^{2n-4} . 
\]
For $j=0,1,2$, we put $\eta=\eta_j$ in (\ref{eq:claim:xi}).
Then, the Hodge--Riemann bilinear relations yield $\delta_j=0$ for $j=0,1,2$. 
This finishes the proof of Claim \ref{claim:xi}.
\end{proof}

Exploiting the isomorphism of $K$-algebras $\phi$ from Claim \ref{claim:xi}, the proof of Proposition \ref{prop:psi:(1,1)} is now obtained by changing the notation in the corresponding part of the proof of Proposition \ref{prop:psi}.
This finishes the proof of Proposition \ref{prop:psi:(1,1)}.

\section{Some simply connected surfaces with special automorphisms} \label{sec:surfaces}
In this section we construct for any integer $g\geq 1$ a simply connected surface $Y_{g}$ of geometric genus $g$ and with special automorphisms.
In the proof of Theorem \ref{thm0} in Section \ref{sec:thm0'}, we will then apply the construction from Section \ref{sec:charles} to these surfaces.
In Section \ref{sec:thm2}, we will use the examples from Section \ref{sec:thm0'} in order to prove Theorem \ref{thm1}.
It is only the proof of the latter theorem where it will become important that $b_2(Y_g)$ tends to infinity if $g$ does.

\subsection{Hyperelliptic curves with special automorphisms} \label{subsec:hyperell}
For $g\geq 1$, let $C_g$ denote the hyperelliptic curve with affine equation $y^2=x^{2g+1}-1$, see \cite{tashiro}.
The complement of this affine piece in $C_g$ is a single point which we denote by $\infty$.
For a primitive $(2g+1)$-th root of unity $\zeta_{2g+1}$, the maps
\[
(x,y)\mapsto (\zeta_{2g+1}\cdot x,y)\ \ \text{and}\ \ (x,y)\mapsto (x,-y)
\]
induce automorphisms of $C_g$ which we denote by $\eta_g$ and $\iota$ respectively.
Then, $\iota$ has the $(2g+2)$ fixed points
\[
(1,0),(\zeta_{2g+1},0),\ldots ,(\zeta_{2g+1}^{2g},0)\ \text{and}\ \infty .
\]
The automorphism $\eta_g$ fixes $\infty$ and performs a cyclic permutation on the remaining fixed points. 
The corresponding permutation matrix has eigenvalues $1,\zeta_{2g+1},\ldots ,\zeta^{2g}_{2g+1}$.

The holomorphic $1$-forms 
\[
\frac{x^{i-1}}{y}\cdot dx ,
\]
where $i=1,\ldots ,g$, form a basis of $H^{1,0}(C_g)$.
Therefore, $\eta_g^\ast$ has eigenvalues $\zeta_{2g+1},\ldots ,\zeta_{2g+1}^g$ on $H^{1,0}(C_g)$.
Moreover, $\iota$ acts on $H^1(C_g,\Z)$ by multiplication with $-1$.

\subsection{The elliptic curve $E_i$} \label{subsec:ell} 
Let $E_i$ be the elliptic curve $\C/(\Z\oplus i\Z)$, cf.\ Section \ref{subsec:j-inv}. 
Multiplication by $i$ and $-1$ induces automorphisms $\eta_i$ and $\iota$ of $E_i$ respectively.
The involution $\iota$ has four fixed points.
The action of $\eta_i$ fixes two of those fixed points and interchanges the remaining two.
On $H^{1,0}(E_i)$, the automorphisms $\iota$ and $\eta_i$ act by multiplication with $-1$ and $i$ respectively. 

\subsection{Products modulo the diagonal involution} \label{subsec:surfaces} 
For $g\geq 1$, we consider the product $C_g\times E_i$, where $C_g$ and $E_i$ are defined above.
On this product, the involution $\iota$ acts via the diagonal.
This action has $8g+8$ fixed points.
Let $\widetilde {C_g\times E_i}$ be the blow-up of these fixed points.
Then,
\begin{align} \label{eq:def:Y}
Y_g:=\widetilde {C_g\times E_i}/\iota 
\end{align}
is a smooth surface. 
For instance, $Y_1=K3(C_1\times E_i)$ is a Kummer K3 surface, see Section \ref{subsec:K3}.

\begin{lemma} \label{lem:sconnected}
The surface $Y_g$ is simply connected.
\end{lemma}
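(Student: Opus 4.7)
My plan is to reduce to computing $\pi_1(X')$ where $X' := (C_g \times E_i)/\iota$ is the singular quotient. Since the natural morphism $Y_g \to X'$ is the minimal resolution contracting only the $8g+8$ exceptional $\CP^1$'s above the $A_1$-singularities coming from the fixed points of $\iota$, and each such fiber is simply connected, this morphism induces $\pi_1(Y_g) \cong \pi_1(X')$.

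To analyse $X'$, I would note that $\iota = (\iota_C, \iota_E)$ sits inside the full $(\Z/2)^2$-action generated by $(\iota_C, \id)$ and $(\id, \iota_E)$. Taking the total quotient gives $(C_g/\iota_C) \times (E_i/\iota_E) = \CP^1 \times \CP^1$, so there is a residual degree-$2$ morphism $f \colon X' \to \CP^1 \times \CP^1$ whose branch locus $B$ is the union of $2g+2$ vertical lines $\{W_i\} \times \CP^1$ over the branch points $W_i$ of $C_g \to \CP^1$ and $4$ horizontal lines $\CP^1 \times \{T_j\}$ over the branch points $T_j$ of $E_i \to \CP^1$.

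I would then compute $\pi_1(X')$ via the orbifold fundamental group of $(\CP^1 \times \CP^1, B)$ with each branch component of order $2$. Since $B$ is a product divisor, this orbifold group is a product $G_1 \times G_2$, with
\[
G_1 = \langle x_1, \ldots, x_{2g+2} \mid x_i^2 = 1,\ x_1 \cdots x_{2g+2} = 1 \rangle, \quad G_2 = \langle y_1, \ldots, y_4 \mid y_j^2 = 1,\ y_1 y_2 y_3 y_4 = 1 \rangle .
\]
The cover $f$ is classified by the character $\chi \colon G_1 \times G_2 \to \Z/2$ sending every $x_i$ and every $y_j$ to $1$, so $\pi_1^{\mathrm{orb}}(X') = \ker \chi$. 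Each of the $8g+8$ nodes of $X'$ above an intersection $(W_i, T_j) \in B$ contributes a local isotropy $\Z/2 = \langle (x_i, y_j) \rangle \subset \ker \chi$, and the topological $\pi_1(X')$ is the quotient of $\ker \chi$ by the normal closure of all these $(x_i, y_j)$.

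To finish I would verify that this quotient is trivial. As a subgroup of $G_1 \times G_2$, $\ker \chi$ is generated by elements of the three types $(x_i x_{i'}, 1)$, $(1, y_j y_k)$ and $(x_i, y_j)$, and the identities
\[
(x_i, y_j)(x_{i'}, y_j)^{-1} = (x_i x_{i'}, 1), \quad (x_i, y_j)(x_i, y_k)^{-1} = (1, y_j y_k)
\]
(which use $x_i^2 = y_j^2 = 1$) show that enforcing $(x_i, y_j) = 1$ for all $i, j$ already kills every generator. Hence $\pi_1(X') = 1$ and $\pi_1(Y_g) = 1$. The most delicate step I expect is justifying the orbifold-theoretic passage from $\pi_1^{\mathrm{orb}}(X')$ to the topological $\pi_1(X')$ via killing the local $\Z/2$-isotropies at the nodes; once that framework is in place, the rest is formal group theory on $G_1 \times G_2$.
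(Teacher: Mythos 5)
Your argument is correct, but it takes a genuinely different route from the paper. The paper also reduces to the singular quotient $Y_g'=(C_g\times E_i)/\iota$, but then fibers it over $\CP^1=E_i/\iota$ via the second projection: over the complement $U$ of the four branch points this is a $C_g$-bundle with a section, giving a split exact sequence for $\pi_1(V)$, $V=\pi^{-1}(U)$; Koll\'ar's theorem on surjectivity of $\pi_1(V)\to\pi_1(Y_g')$ then reduces everything to killing the image of $\pi_1(C_g)$, which is done by degenerating the general fiber to a special fiber $C_g/\iota\cong\CP^1$. You instead exploit the full $(\Z/2)^2$-action and present $X'$ as the double cover of $\CP^1\times\CP^1$ branched along the grid of $2g+2$ vertical and $4$ horizontal lines, compute $\pi_1^{\mathrm{orb}}$ of the product orbifold as $G_1\times G_2$, identify $\pi_1^{\mathrm{orb}}(X')=\ker\chi$, and pass to the topological $\pi_1$ by killing the isotropy classes $(x_i,y_j)$ at the $8g+8$ nodes; the concluding group theory is correct (your identities do show the $(x_i,y_j)$ generate $\ker\chi$). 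The step you rightly flag as delicate -- that $\pi_1$ of the underlying space of a developable orbifold is $\pi_1^{\mathrm{orb}}$ modulo the normal closure of the local isotropy -- is Armstrong's theorem, applicable here since both one-dimensional factors (spheres with $\geq 4$ cone points of order $2$) are good orbifolds; you should cite or prove this, just as the paper cites Koll\'ar. The trade-off: the paper's fibration argument is more elementary and self-contained modulo one surjectivity lemma, while yours is purely combinatorial once the orbifold formalism is in place and adapts immediately to other bidouble-cover configurations.
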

\begin{proof}
It suffices to prove that the normal surface
\[
Y_g':=(C_g\times E_i)/\iota 
\]
is simply connected.
Projection to the second coordinate induces a map
\[
\pi: Y_g' \longrightarrow \CP^1 .
\]
Let $U\subseteq \CP^1$ be the complement of the $4$ branch points of $E_i\to \CP^1$.
Then, restriction of $\pi$ to $V:=\pi^{-1}(U)$ yields a fiber bundle $\pi|_V:V\to U$ with fiber $C_g$.
Since $U$ is homotopic to a wedge of $3$ circles, the long exact homotopy sequence yields a short exact sequence
\[
0\longrightarrow \pi_1(C_g)\longrightarrow \pi_1(V) \longrightarrow \pi_1(U) \longrightarrow 0 \ .
\]
Since $\pi$ has a section, this sequence splits.
Since $V$ is the complement of a divisor in $Y_g'$, the natural map $\pi_1(V)\to \pi_1(Y_g')$ is surjective, see \cite[Prop.\ 2.10]{kollar}.
Therefore, the above split exact sequence shows that $\pi_1(Y_g')$ is generated by the fundamental group of a general fiber together with the image of the fundamental group of a section of $\pi$.
The latter is clearly trivial.
Furthermore, the inclusion of a general fiber $C_g\hookrightarrow Y_g'$ is homotopic to the inclusion of a special fiber $C_g/\iota \cong \CP^1$, which is simply connected.
It follows that the image of $\pi_1(C_g)\to \pi_1(Y_g')$ is trivial.
This proves the lemma.
\end{proof}

\begin{definition} \label{def:ff'}
Let $Y_g$ be as in (\ref{eq:def:Y}).
Then we define the automorphisms $f$ and $f'$ of $Y_g$ to be induced by $\eta_g\times \id$ and $\id \times \eta_i$ respectively.
\end{definition}

\begin{lemma} \label{lem:Y}
The surface $Y_g$ with automorphisms $f$ and $f'$ as above satisfies (A1)--(A3).
\end{lemma}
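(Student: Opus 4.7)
The plan is to leverage the $\iota$-equivariant description $Y_g = \widetilde{C_g\times E_i}/\iota$. The fixed locus of $\iota$ on the blow-up $\widetilde{C_g\times E_i}$ is precisely the union of the $8g+8$ exceptional divisors (each fixed pointwise, since $\iota$ acts on the normal space at every fixed point by $-\id$). Hence the quotient is smooth and I can identify $H^\ast(Y_g,\Q) \cong H^\ast(\widetilde{C_g\times E_i},\Q)^\iota$ via pullback. Now $\iota^\ast$ acts as $-1$ on $H^1(C_g,\Q)$ and on $H^1(E_i,\Q)$, so by K\"unneth it acts trivially on $H^2(C_g \times E_i,\Q)$; the classes $[E_p]$ are also $\iota$-invariant, so $\iota^\ast$ is trivial on all of $H^2(\widetilde{C_g\times E_i},\Q)$. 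The identification $\pi^\ast : H^2(Y_g,\Q)\stackrel{\sim}\longrightarrow H^2(\widetilde{C_g\times E_i},\Q)$ thus intertwines the induced actions of $f$ and $f'$ with the corresponding actions on the K\"unneth decomposition
\[
H^2(\widetilde{C_g\times E_i},\Q) \;=\; H^2(C_g,\Q) \,\oplus\, \bigl(H^1(C_g,\Q)\otimes H^1(E_i,\Q)\bigr) \,\oplus\, H^2(E_i,\Q) \,\oplus\, \bigoplus_p \Q\cdot [E_p].
\]

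For (A1) I would take $\alpha,\beta \in H^{1,1}(Y_g,\Q)$ to correspond via this identification to the fiber classes $[C_g\times\{pt\}]$ and $[\{pt\}\times E_i]$, which are $\iota$-invariant and of type $(1,1)$ (both factors being curves, their $H^2$'s are purely $(1,1)$). Both squares vanish (fiber classes on a product surface have zero self-intersection), and their cup product is nonzero since $[C_g\times\{pt\}]\cdot [\{pt\}\times E_i] = 1$ on $\widetilde{C_g\times E_i}$ and $\pi^\ast$ is injective on rational cohomology.

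For (A2) and (A3) I would read off the eigenvalues of $f^\ast$ and $f'^\ast$ summand by summand. By Sections~\ref{subsec:hyperell} and~\ref{subsec:ell}, the eigenvalues of $\eta_g^\ast$ on $H^1(C_g,\C)$ are precisely the non-trivial $(2g+1)$-th roots of unity, and $\eta_g$ fixes $\infty$ while cyclically permuting the remaining $2g+1$ Weierstrass points of $C_g$; the eigenvalues of $\eta_i^\ast$ on $H^1(E_i,\C)$ are $\pm i$, and $\eta_i$ fixes two of the four $2$-torsion points on $E_i$ and swaps the other two. Combining this with the induced permutation action on $\bigoplus_p \Q\cdot[E_p]$ (for $f$: four fixed exceptional divisors and four cyclic orbits of length $2g+1$; for $f'$: $4g+4$ fixed divisors and $2g+2$ swapped pairs), I find that the set of eigenvalues of $f^\ast$ on $H^2(Y_g,\C)$ is the full group $\mu_{2g+1}$ of $(2g+1)$-th roots of unity, while the set of eigenvalues of $f'^\ast$ is $\mu_4 = \{1,-1,i,-i\}$. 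Both are finite sets of roots of unity, so $\Aut(\C)$-invariant, and they are distinct because $\gcd(2g+1,4) = 1$ forces $i \in \mu_4 \setminus \mu_{2g+1}$; this yields (A2).

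The last step, and the only one that requires genuine bookkeeping, is to rerun the eigenvalue computation on $H^{1,1}(Y_g,\C)$ to obtain (A3). The $(1,1)$-part of $H^1(C_g)\otimes H^1(E_i)$ is $H^{1,0}(C_g)\otimes H^{0,1}(E_i) \oplus H^{0,1}(C_g)\otimes H^{1,0}(E_i)$; on it the eigenvalues of $\eta_g^\ast \otimes \id$ are still exactly the non-trivial $(2g+1)$-th roots of unity (the eigenvalues on $H^{1,0}$ together with their complex conjugates on $H^{0,1}$), and the eigenvalues of $\id\otimes \eta_i^\ast$ remain $\pm i$ (now with multiplicity $g$ each). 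The exceptional divisors and the K\"unneth factors $H^2(C_g)$, $H^2(E_i)$ are all of type $(1,1)$, so they contribute to $H^{1,1}$ exactly as before. Hence the eigenvalue sets on $H^{1,1}(Y_g,\C)$ coincide with those on $H^2(Y_g,\C)$, and (A3) follows by the same argument as (A2).
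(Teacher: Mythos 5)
Your proposal is correct and follows essentially the same route as the paper: both identify $H^2(Y_g,K)$ with $H^2(C_g\times E_i,K)$ plus the span of the $8g+8$ exceptional classes, take the two fiber classes for (A1), and read off the eigenvalue sets $\mu_{2g+1}$ for $f^\ast$ and $\{\pm 1,\pm i\}$ for $f'^\ast$ on both $H^2$ and $H^{1,1}$ from the explicit descriptions of $\eta_g$ and $\eta_i$. You merely spell out the Künneth and permutation bookkeeping that the paper leaves implicit.
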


\begin{proof}
In order to describe the second cohomology of $Y_g$, we denote the exceptional $\CP^1$-curves of $Y_g$ by $D_1,\ldots ,D_{8g+8}$.
Then, for any field $K$:
\begin{align} \label{eq:H2Y}
H^2(Y_g,K)= H^2(C_{g}\times E_i,K)\oplus \left(\bigoplus_{i=1}^{8g+8} [D_i]\cdot K\right) .
\end{align}
It follows from the discussion in Section \ref{subsec:hyperell} (resp.\ \ref{subsec:ell}) that the action of $f$ (resp.\ $f'$) on $H^2(Y_g,\C)$ has eigenvalues $1,\zeta_{2g+1},\ldots ,\zeta^{2g}_{2g+1}$ (resp.\ $\pm 1,\pm i$).
Moreover, the same statement holds for their actions on $H^{1,1}(Y_g,\C)$.
This proves (A2) and (A3).

By (\ref{eq:H2Y}), nontrivial rational $(1,1)$-classes on $C_g$ and $E_i$ induce classes $\alpha$ and $\beta$ in $H^{1,1}(Y_g,\Q)$ which satisfy (A1).
This finishes the prove of the lemma.
\end{proof}

\section{Multilinear intersection forms on $H^2(-,\R)$ and $H^{1,1}(-\C)$} \label{sec:thm0'} 
Here we prove Theorem \ref{thm0}.
This will be achieved by Lemma \ref{lem:Tgnsimple} and Theorem \ref{thm0'} below, where more precise statements are proven.

Let $n\geq 4$ and $g\geq 1$. 
Moreover, let $Y_g$ be the simply connected surface with automorphisms $f$ and $f'$ from Definition \ref{def:ff'}.
We pick an ample divisor on $Y_g$ which is fixed by $f$ and $f'$.
A sufficiently large multiple of this divisor gives an embedding
\[
i:Y_g\hookrightarrow \CP^N
\]
with $n\leq N+4$ such that the actions of $f$ and $f'$ fix the pullback of the hyperplane class.

Next, let 
\[
X_g:=Bl_{Z_1\cup\ldots \cup Z_5} \left(Y_g\times Y_g\times \CP^N\right) 
\]
be the blow-up of $Y_g\times Y_g\times \CP^N$ along $Z_1\cup \ldots \cup Z_5$, where $Z_i$ is defined in (\ref{eq:def:Z_i}).
Since $n\leq N+4$, $X_g$ contains a smooth $n$-dimensional complete intersection subvariety 
\begin{align} \label{eq:def:Tg}
T_{g,n}\subseteq X_g .
\end{align}
Since $Y_g$, $f$ and $f'$ are defined over $\Q[\zeta_{8g+4}]=\Q[\zeta_{2g+1},i]$, so is $X_g$ and we may assume that the same holds true for $T_{g,n}$.

\begin{lemma} \label{lem:Tgnsimple}
Let $n\geq 2$, then the variety $T_{g,n}$ from (\ref{eq:def:Tg}), as well as each of its conjugates, is simply connected.
\end{lemma}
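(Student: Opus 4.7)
The strategy is to pass simple connectedness from the ambient blow-up $X_g$ (and its conjugate) to the complete intersection $T_{g,n}$ via the Lefschetz hyperplane theorem, which applies since $\dim(T_{g,n})=n\geq 2$. Recall that blowing up a smooth projective variety along a smooth closed subvariety does not alter the fundamental group, and that a smooth complete intersection of dimension at least two in a smooth projective variety has the same fundamental group as the ambient variety (iterated Lefschetz hyperplane theorem, where the dimension $\geq 2$ condition is used at the last step).

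First I would prove simple connectedness of $X_g$ itself. Since
\[
\pi_1(Y_g\times Y_g\times \CP^N)\cong \pi_1(Y_g)\times\pi_1(Y_g)\times\pi_1(\CP^N)
\]
and each factor is trivial by Lemma \ref{lem:sconnected} and the simple connectedness of $\CP^N$, the fact that blow-ups preserve $\pi_1$ gives $\pi_1(X_g)=1$. Applying Lefschetz to the smooth complete intersection $T_{g,n}\subseteq X_g$ of dimension $n\geq 2$ then yields $\pi_1(T_{g,n})=1$.

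For the conjugate $T_{g,n}^\sigma$, observe that base change commutes with blowing up and with taking complete intersections, so $T_{g,n}^\sigma$ is a smooth complete intersection subvariety of dimension $n\geq 2$ inside
\[
X_g^\sigma = Bl_{Z_1^\sigma\cup \cdots \cup Z_5^\sigma}\!\left(Y_g^\sigma\times Y_g^\sigma\times \CP^N\right).
\]
By the same Lefschetz-plus-blow-up reasoning as above, it therefore suffices to show that $Y_g^\sigma$ is simply connected. Here $Y_g^\sigma$ has the same construction as $Y_g$ with $C_g$, $E_i$, $\iota$ replaced by $C_g^\sigma$, $E_i^\sigma$, $\iota^\sigma$: indeed, $C_g^\sigma$ is still a smooth projective curve of genus $g$, $E_i^\sigma$ is still an elliptic curve double-covering $\CP^1$ over four branch points, and $\iota^\sigma$ is still the diagonal involution on $C_g^\sigma\times E_i^\sigma$ with $8g+8$ isolated fixed points (all these features being algebraic and so preserved by $\sigma$).

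Consequently, the argument of Lemma \ref{lem:sconnected} applies verbatim to $Y_g^\sigma$: the projection $(C_g^\sigma\times E_i^\sigma)/\iota^\sigma\to \CP^1$ induced by the second factor is a proper map whose restriction over the complement of the four branch points is a locally trivial fibre bundle with fibre $C_g^\sigma$, admits a section, and has a simply connected special fibre $C_g^\sigma/\iota^\sigma\cong \CP^1$; the short exact sequence of fundamental groups then splits and its kernel is killed by the degenerate fibre, which shows that $Y_g^\sigma$ is simply connected. The only thing to be careful about is that every step of the topological argument in Lemma \ref{lem:sconnected} (the existence of a section, the triviality of $\pi_1$ of the special fibre, and the fact that the complement of a divisor surjects onto $\pi_1$ of the variety, cf.\ \cite[Prop.\ 2.10]{kollar}) is of an algebraic nature, so transfers without change to the conjugate. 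This completes the proof.
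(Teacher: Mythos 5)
Your proof is correct, and for the main chain of reductions (blow-ups preserve $\pi_1$, the product $Y_g\times Y_g\times\CP^N$ is simply connected, Lefschetz hyperplane for the complete intersection of dimension $n\geq 2$) it coincides with the paper's argument. The one place where you diverge is the treatment of the conjugates: you re-run the fibration argument of Lemma \ref{lem:sconnected} on $Y_g^\sigma$, checking that each ingredient (the bundle structure over the complement of the branch points, the section, the rational special fibre $C_g^\sigma/\iota^\sigma\cong\CP^1$, and Koll\'ar's surjectivity statement) is of algebraic nature and hence persists under conjugation. The paper instead short-circuits this entirely by observing that $C_g$, $E_i$ and the involution $\iota$ are all defined over $\Z$, so that $Y_g^\sigma\cong Y_g$ as abstract varieties and simple connectedness of $Y_g^\sigma$ is immediate from Lemma \ref{lem:sconnected}. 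Both routes are valid; the paper's is shorter and avoids having to re-examine the topological argument, while yours has the mild virtue of not depending on the specific field of definition of $Y_g$ and would survive if the surface were replaced by one not defined over $\overline\Q$.
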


\begin{proof}
Since $Y_g$ is simply connected by Lemma \ref{lem:sconnected}, so is $X_g$. 
By the Lefschetz hyperplane theorem, $T_{g,n}$ is then simply connected for $n\geq 2$.

Since the curves $C_g$ and $E_i$ in the definition of $Y_g$ are defined over $\Z$, it follows that $Y_g$ is isomorphic to any conjugate $Y_g^\sigma$.
Thus, $Y_g^\sigma$ is simply connected and the above reasoning shows that the same holds true for $T_{g,n}^\sigma$, as long as $n\geq 2$.
This proves the lemma.
\end{proof}

The next theorem, which implies Theorem \ref{thm0} from the introduction, shows that certain automorphisms $\sigma\in \Aut(\C)$ which act nontrivially on $\Q[\zeta_{8g+4}]$ change the analytic topology as well as the complex Hodge structure of $T_{g,n}$.

\begin{theorem} \label{thm0'}
Let $g\geq 1$ and $n\geq 4$ be integers and let $\sigma\in \Aut(\C)$ with $\sigma(i)=i$ and $\sigma(\zeta_{2g+1})\neq \zeta_{2g+1}$ or vice versa.
Then, the $\R$-multilinear intersection forms on $H^2(T_{g,n},\R)$ and $H^2(T_{g,n}^\sigma,\R)$, as well as the $\C$-multilinear intersection forms on $H^{1,1}(T_{g,n},\C)$ and $H^{1,1}(T_{g,n}^\sigma,\C)$, are not weakly isomorphic.
\end{theorem}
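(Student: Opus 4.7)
The strategy is to argue by contradiction using Propositions \ref{prop:psi} and \ref{prop:psi:(1,1)}. By Lemma \ref{lem:Y}, the triple $(Y_g, f, f')$ satisfies (A1)--(A3), so a weak isomorphism of the relevant multilinear forms on $T_{g,n}$ and $T_{g,n}^\sigma$ would produce a graded $K$-algebra isomorphism $\psi$ from $H^\ast(Y_g, \R)$ (resp.\ $H^{\ast,\ast}(Y_g, \C)$) to $H^\ast(Y_g^\sigma, \R)$ (resp.\ $H^{\ast,\ast}(Y_g^\sigma, \C)$), commuting with the pullback actions of $f, f'$ and their $\sigma$-conjugates and sending $i^\ast h$ to a scalar multiple of $(i^\sigma)^\ast h^\sigma$. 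We treat the main case $\sigma(i)=i$ and $\sigma(\zeta_{2g+1})=\zeta_{2g+1}^m$ with $m\not\equiv 1\pmod{2g+1}$; the other case of the hypothesis is analogous.

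Using $Y_g = \widetilde{C_g\times E_i}/\iota$ from Section \ref{sec:surfaces}, decompose $H^2(Y_g,\C)$ into joint eigenspaces of $(f^\ast, (f')^\ast)$. For $k\in\{1,\ldots,g\}$, the joint $(\zeta_{2g+1}^k, i)$-eigenspace $V_k$ is one-dimensional and arises from $H^{1,0}(C_g)_{\zeta_{2g+1}^k}\otimes H^{1,0}(E_i)\subset H^{2,0}(Y_g)$; its complex conjugate $\bar V_k$ lies in $H^{0,2}(Y_g)$. For $j\in\{g+1,\ldots,2g\}$, the joint $(\zeta_{2g+1}^j, i)$-eigenspace on $H^2(Y_g,\C)$ is one-dimensional and contained in $H^{1,1}(Y_g)$, originating from $H^{0,1}(C_g)\otimes H^{1,0}(E_i)$. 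Comparing the sums of $\{1,\ldots,g\}$ and $\{m,2m,\ldots,gm\}$ modulo $2g+1$ and using $\gcd(g(g+1)/2, 2g+1)=1$ shows these two sets are distinct; pick $k_0\in\{1,\ldots,g\}$ with $m^{-1}k_0\in\{g+1,\ldots,2g\}$. Applying Lemma \ref{lem:eigenvalues} on each Hodge piece---justified because the $\sigma$-linear isomorphism (\ref{eq:sigmaliniso}) preserves the Hodge decomposition---the joint $(\zeta_{2g+1}^{k_0}, i)$-eigenspace $V_{k_0}^\sigma$ in $H^2(Y_g^\sigma,\C)$ is one-dimensional and lies in $H^{1,1}(Y_g^\sigma)$; likewise $\bar V_{k_0}^\sigma\subset H^{1,1}(Y_g^\sigma)$.

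For the statement about $H^{1,1}$ (case $K=\C$) this already yields a contradiction: $\psi|_{H^{1,1}}$ restricts to a $\C$-linear isomorphism between the joint $(\zeta_{2g+1}^{k_0}, i)$-eigenspaces inside $H^{1,1}(Y_g,\C)$ and $H^{1,1}(Y_g^\sigma,\C)$, which have respective dimensions $0$ and $1$.

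For the statement about $H^2(-,\R)$ (case $K=\R$) the joint eigenspace dimensions on $H^2$ agree, so one extracts Hodge-theoretic content from the intersection form via Hodge--Riemann. The $\C$-linear extension of $\psi$ maps $S:=V_{k_0}\oplus \bar V_{k_0}$ to $S^\sigma:=V_{k_0}^\sigma\oplus \bar V_{k_0}^\sigma$; since $\psi$ is $\R$-linear it identifies the two real subspaces $S_\R\subset H^2(Y_g,\R)$ and $S_\R^\sigma\subset H^2(Y_g^\sigma,\R)$. The complexification of $S_\R$ lies in $H^{2,0}\oplus H^{0,2}$, whereas that of $S_\R^\sigma$ lies inside $H^{1,1}(Y_g^\sigma)$ and is moreover primitive since $V_{k_0}^\sigma$ has non-trivial $(f'^\sigma)^\ast$-eigenvalue while the polarization is fixed. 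By the Hodge--Riemann bilinear relations, the intersection form is therefore positive definite on $S_\R$ and negative definite on $S_\R^\sigma$. As a graded algebra isomorphism, $\psi$ acts on the one-dimensional $H^4(Y_g,\R)$ by a non-zero real scalar $c$, whence the pullback via $\psi$ of the intersection form on $H^2(Y_g^\sigma,\R)$ equals $c$ times the form on $H^2(Y_g,\R)$. Both total intersection forms have signature $(2g+1, 10g+9)$ (Hodge numbers being conjugation-invariant), forcing $c>0$, so the pullback form on $S_\R$ would be positive definite, contradicting the negative definiteness of its image on $S_\R^\sigma$. The main obstacle is this second case, where the natural dimension-counting arguments on joint eigenspaces of $H^2$ do not distinguish $Y_g$ from $Y_g^\sigma$, and one must pass through the Hodge index theorem together with a careful identification of the sign of the scalar $c$.
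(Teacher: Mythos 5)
Your proof is correct and follows the same overall strategy as the paper's: reduce to the graded algebra isomorphism $\psi$ on the cohomology of $Y_g$ via Lemma \ref{lem:Y} and Propositions \ref{prop:psi} and \ref{prop:psi:(1,1)}, then derive a contradiction from the fact that $\sigma$ moves a joint eigenspace of $(f^\ast,f'^\ast)$ out of $H^{2,0}\oplus H^{0,2}$ and into $H^{1,1}$. Two points of divergence are worth recording. First, the paper treats only $\sigma(\zeta_{2g+1})=\zeta_{2g+1}^{-1}$ explicitly (where any $k_0$ works, since $\{-1,\ldots,-g\}$ and $\{1,\ldots,g\}$ are disjoint modulo $2g+1$) and asserts that the general case is similar; your eigenvalue-sum argument, resting on $\gcd\bigl(g(g+1)/2,\,2g+1\bigr)=1$, supplies the selection of $k_0$ for an arbitrary exponent $m\not\equiv 1$, which is a genuine, if small, completion of that remark. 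Second, for the sign contradiction in the $H^2(-,\R)$ case the paper computes directly with $\omega\cup\overline{\omega'}$ and $\overline{\omega}\cup\omega'$, producing the factor $-|\lambda|^2$ against the positive scalar $b^2$ obtained from (P1); you instead invoke the Hodge--Riemann bilinear relations (positive definiteness on $(H^{2,0}\oplus H^{0,2})_\R$ versus negative definiteness on primitive real $(1,1)$-classes) and fix the sign of the degree-four scalar by comparing the signatures $(2g+1,10g+9)$. These are the same mechanism in different clothing --- the paper's $-|\lambda|^2$ computation is a Hodge--Riemann sign in disguise --- but your version makes the Hodge-theoretic content explicit and replaces the use of (P1) by a signature count; the mild price is that you must verify primitivity of $V_{k_0}^\sigma$ and the value of $b_2(Y_g)$, both of which you do correctly.
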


\begin{proof}
For ease of notation, we assume $\sigma(i)=i$ and $\sigma(\zeta_{2g+1})=\zeta_{2g+1}^{-1}$. 
The general case is proven similarly.

Since the curves $C_g$ and $E_i$ from Sections \ref{subsec:hyperell} and \ref{subsec:ell} are defined over $\Z$, it follows that the isomorphism type of $Y_g$ is invariant under any automorphism of $\C$.
Hence, we may identify $Y_g$ with $Y_g^\sigma$. 
Under this identification, $f'^\sigma=f'$ since $i$ is fixed by $\sigma$.
Moreover, $f^\sigma=f^{-1}$, since it is induced by the automorphism 
\[
\eta_g^{-1}\times \id \in \Aut (C_{g}\times E_i) .
\]

Suppose that the $\R$-multilinear intersection forms on $H^2(T_{g,n},\R)$ and $H^2(T_{g,n}^\sigma,\R)$ are weakly isomorphic.
By Lemma \ref{lem:Y}, Proposition \ref{prop:psi} applies and we obtain an $\R$-algebra automorphism of $H^\ast(Y_g,\R)$ with properties (P1) and (P2).
By (P1), 
\[
\psi(i^\ast h)=b\cdot i^\ast h
\] 
for some $b\in \R^\times$.
Since the square of $i^\ast h$ generates $H^4(Y_g,\R)$, it follows that in degree $4$, the automorphism $\psi$ is given by multiplication with a positive real number.

We extend $\psi$ now $\C$-linearly and obtain an automorphism
\[
\psi:H^\ast(Y_g,\C) \stackrel{\sim}\longrightarrow H^\ast(Y_g,\C) ,
\]
which we denote by the same letter and which satisfies
\begin{align} \label{eq:f&f'}
\psi\circ f = f^{-1}\circ \psi\ \ \text{and} \ \ \psi\circ f'=f'\circ \psi .
\end{align}

Let us now pick nontrivial classes $\omega \in H^{1,0}(C_g)$ and $\omega'\in H^{1,0}(E_i)$ with $\eta_g^\ast\omega=\zeta_{2g+1}\cdot \omega$ and $\eta_i^\ast\omega'=i\cdot \omega'$.
Then, $\omega\cup \overline {\omega'}$ lies in $H^{1,1}(Y_g)$ and we consider $\psi(\omega\cup \overline{\omega'})$ in $H^2(Y_g,\C)$.
By (\ref{eq:f&f'}), $f^{-1}$ and $f'$ act on this class by multiplication with $\zeta_{2g+1}$ and $-i$ respectively.
We claim that the only class in $H^2(Y_g,\C)$ with this property is $\overline{\omega}\cup \overline{\omega'}$ and so
\begin{align} \label{eq:psi(omegacupomega')}
\psi(\omega\cup \overline{\omega'})=\lambda\cdot \overline{\omega}\cup \overline{\omega'} 
\end{align}
for some non-zero $\lambda\in \C$.
Indeed, since $\eta_i$ interchanges two of the fixed points of $\iota$ on $E_i$ and fixes the remaining two, $f'^\ast$ has eigenvalues $\pm 1$ on the subspace of exceptional divisors in (\ref{eq:H2Y}).
Therefore, $\psi(\omega\cup \overline{\omega'})$ needs to be contained in $H^2(C_g\times E_i,\C)$.
On this subspace, ${f^{-1}}^\ast$ and $f'^\ast$ are given by $(\eta_g^{-1}\times \id)^\ast$ and $(\id \times \eta_i)^\ast$ respectively.
Our claim follows by the explicit description of $\eta_g$ and $\eta_i$ in Sections \ref{subsec:hyperell} and \ref{subsec:ell}.

Together with its complex conjugate, equation (\ref{eq:psi(omegacupomega')}) shows:
\[
\psi(\omega \cup \overline{\omega'} \cup \overline {\omega} \cup \omega')= -|\lambda|^2\cdot \omega \cup \overline{\omega'} \cup \overline {\omega} \cup \omega'  .
\]
Since the above degree four class generates $H^4(Y_g,\C)$, we deduce that $\psi$ is given in degree four by multiplication with $-|\lambda|^2$.
As we have seen earlier, this number should be positive, which is a contradiction.
This finishes the proof of the first assertion in Theorem \ref{thm0'}.

For the proof of the second assertion, assume that the $\C$-multilinear intersection forms on $H^{1,1}(T_{g,n},\C)$ and $H^{1,1}(T_{g,n}^\sigma,\C)$ are weakly isomorphic.
By Lemma \ref{lem:Y} and Proposition \ref{prop:psi}, this yields an automorphism $\psi$ of $H^{1,1}(Y_g,\C)$ which satisfies (\ref{eq:f&f'}).
Then, $f^{-1}$ and $f'$ act on $\psi(\omega\cup \overline{\omega'})$ by multiplication with $\zeta_{2g+1}$ and $-i$ respectively.
This is a contradiction, since $H^{1,1}(Y_g,\C)$ does not contain such a class. 
This finishes the proof of the theorem.
\end{proof}

Recall from (\ref{eq:def:Tg}) that $T_{g,n}$ is defined over the cyclotomic number field $\Q[\zeta_{8g+4}]$.
This number field contains the totally real subfield
\[
K_g:=\Q[\zeta_{8g+4}+\zeta_{8g+4}^{-1}].
\]
For instance, $K_1=\Q[\sqrt 3]$.
From Theorem \ref{thm0'}, we deduce the following

\begin{corollary} \label{cor:Kg}
Let $K_g\subseteq K\subseteq \C$ be fields, and let $\sigma\in \Aut(\C)$ with $\sigma(i)=i$ and $\sigma(\zeta_{2g+1})\neq \zeta_{2g+1}$ or vice versa.
Then the intersection forms on the equidimensional vector spaces $H^{1,1}(T_{g,n},K)$ and $H^{1,1}(T_{g,n}^\sigma,K)$ are not weakly isomorphic.
\end{corollary}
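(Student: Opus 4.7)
The plan is to bootstrap from Theorem \ref{thm0'} (which handles $K=\C$) by establishing the spanning claim that $H^{1,1}(T_{g,n},\C)$, and likewise $H^{1,1}(T_{g,n}^\sigma,\C)$, is spanned by $K_g$-rational classes. Granting this, $H^{1,1}(T_{g,n},K)\otimes_K\C=H^{1,1}(T_{g,n},\C)$ (and similarly for the conjugate), so both $K$-vector spaces have dimension $h^{1,1}(T_{g,n})$, giving equidimensionality. Moreover, any weak isomorphism of the $K$-multilinear intersection forms would extend by $\otimes_K \C$ to a weak isomorphism of $\C$-multilinear forms, contradicting Theorem \ref{thm0'}. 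So the task reduces to the spanning claim.

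To prove the spanning claim I would first reduce to the surface $Y_g$. Since $\dim T_{g,n}=n\geq 4$, the Lefschetz hyperplane theorem identifies the degree-two Hodge structures of $T_{g,n}$ and $X_g$; the blow-up formula for $X_g\to Y_g\times Y_g\times \CP^N$ together with Künneth (using $b_1(Y_g)=0$) then expresses $H^{1,1}(X_g,\C)$ as two copies of $H^{1,1}(Y_g,\C)$ plus $\Q$-rational contributions from the exceptional divisors and the hyperplane class of $\CP^N$. Writing $Y_g=\widetilde{C_g\times E_i}/\iota$ and noting that $\iota$ acts trivially on $H^{1,1}(C_g\times E_i,\C)$ (since it acts by $-1$ on each $H^1$), the claim further reduces to $H^{1,1}(C_g\times E_i,\C)$. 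Since $H^{1,1}(C_g)$ and $H^{1,1}(E_i)$ are $\Q$-rational, only the $(1,1)$-part of the cross term $H^1(C_g)\otimes H^1(E_i)$ remains; over $\C$ it is spanned by the $2g$ classes $\omega_k\otimes \overline{\omega'}$ and $\overline{\omega_k}\otimes \omega'$ for $k=1,\dots,g$.

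The heart of the argument will be an eigenspace analysis of $\eta_g\otimes \eta_i$ over $K_g$. Since $\zeta_{2g+1}^k+\zeta_{2g+1}^{-k}\in K_g$, I would decompose $H^1(C_g,K_g)=\bigoplus_{k=1}^g V_k$ into $\eta_g$-stable two-dimensional $K_g$-blocks. On each $V_k\otimes_{K_g} H^1(E_i,K_g)$ the operator $\eta_g\otimes \eta_i$ has the four distinct eigenvalues $\{\pm i\zeta_{2g+1}^{\pm k}\}$, which split into two complex-conjugate pairs and yield two quadratic factors $T^2\pm c_kT+1$ of the characteristic polynomial over $K_g$, where
\[
c_k=i(\zeta_{2g+1}^k-\zeta_{2g+1}^{-k})=\zeta_{8g+4}^{2g+1+4k}+\zeta_{8g+4}^{-(2g+1+4k)}\in K_g
\]
using $i\zeta_{2g+1}^k=\zeta_{8g+4}^{2g+1+4k}$ and that $K_g$ is the totally real subfield of $\Q[\zeta_{8g+4}]$. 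The factor with roots $-i\zeta_{2g+1}^k$ and $i\zeta_{2g+1}^{-k}$ has eigenvectors $\omega_k\otimes \overline{\omega'}$ and $\overline{\omega_k}\otimes \omega'$, so its kernel is a two-dimensional $K_g$-subspace of $V_k\otimes H^1(E_i,K_g)$ whose complexification is the $(1,1)$-part of the block. Summing over $k$ yields the $K_g$-span of the entire cross term.

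The main technical obstacle is the arithmetic identification of $c_k$ and the verification that the two quadratic factors are distinct (equivalently $c_k\neq 0$, which holds since $0<2\pi k/(2g+1)<\pi$ for $k=1,\dots,g$). The spanning claim for $T_{g,n}^\sigma$ is then automatic: since $Y_g$ is defined over $\Q$ we have $Y_g^\sigma=Y_g$, and $T_{g,n}^\sigma$ is a blow-up of the same product $Y_g\times Y_g\times \CP^N$ along the $\sigma$-conjugated subvarieties, so the cohomological reduction and the eigenspace analysis are formally identical.
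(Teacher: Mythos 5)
Your proposal is correct, and its overall architecture coincides with the paper's: reduce to Theorem \ref{thm0'} by proving that the complex $(1,1)$-classes of $T_{g,n}$ (and of $T_{g,n}^\sigma$) are spanned by $K_g$-rational ones, and reduce that spanning claim through the Lefschetz hyperplane theorem, the blow-up formula, K\"unneth and the quotient by $\iota$ to the $(1,1)$-part of the cross term $H^1(C_g)\otimes H^1(E_i)$; the deduction of equidimensionality and the extension of a putative weak isomorphism by $\otimes_K\C$ are exactly as in the paper. Where you genuinely diverge is in the last, crucial step. The paper invokes Tashiro's period computation \cite{tashiro}, which shows that the complex Hodge structure of $H^1(C_g)$ is generated by $\Q[\zeta_{2g+1}]$-rational classes (and that of $E_i$ by $\Q[i]$-rational ones), and then descends from $\Q[\zeta_{8g+4}]$ to its totally real subfield $K_g$ by replacing generators with their averages under complex conjugation of coefficients, which preserves type $(1,1)$. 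You instead realize the $(1,1)$-part of each block $V_k\otimes H^1(E_i)$ as the kernel of the quadratic $T^2+c_kT+1$ with $c_k=i(\zeta_{2g+1}^k-\zeta_{2g+1}^{-k})=\zeta_{8g+4}^{2g+1+4k}+\zeta_{8g+4}^{-(2g+1+4k)}\in K_g$ evaluated on the $\Q$-rational operator $(\eta_g\times\eta_i)^\ast$; since the two quadratic factors are coprime (as $c_k\neq0$ for $1\leq k\leq g$) and the operator is semisimple, this kernel is a $K_g$-subspace of dimension two whose complexification is exactly the span of $\omega_k\otimes\overline{\omega'}$ and $\overline{\omega_k}\otimes\omega'$. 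Your route is self-contained and replaces both the external citation and the Galois-averaging step by an explicit factorization of the characteristic polynomial over $K_g$, at the cost of some arithmetic bookkeeping with $\zeta_{8g+4}$; the paper's route is shorter but leans on the quoted period matrix. Both arguments isolate the same arithmetic fact, namely that the eigenvalue pairs $\{-i\zeta_{2g+1}^k,\,i\zeta_{2g+1}^{-k}\}$ are stable under complex conjugation and have their symmetric functions in $K_g$.
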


\begin{proof} 
By Theorem \ref{thm0'} it suffices to prove that the $(1,1)$-classes on $T_{g,n}$ are spanned by $K_g$-rational ones.
Modulo divisor classes, $H^{1,1}(T_{g,n})$ is given by $H^{1,1}(Y_g)\oplus H^{1,1}(Y_g)$.
Furthermore, modulo divisors, $H^{1,1}(Y_g)$ is given by the $\iota$-invariant classes on $E_i\times C_g$.
The complex Hodge structure of $E_i$ and $C_g$ is generated by $\Q[i]$- and $\Q[\zeta_{2g+1}]$-rational classes respectively, see \cite{tashiro} for the latter.
We may now arrange that the induced generators of $H^{1,1}(Y_g)$ are invariant under complex conjugation and thus lie in the subspace of $K_g$-rational classes.
This concludes the proof of the corollary.
\end{proof}

\begin{remark}  \label{rem:Hodgeconj}
Our types of arguments are consistent with Conjecture \ref{conj1} in the sense that they cannot detect conjugate varieties with non-isomorphic algebras of $\Q$-rational $(p,p)$-classes.
This is because the essential ingredient in the proof of Theorem \ref{thm0'} is a variety $Y$ with an automorphism whose action on $H^{p,p}(Y,K)$ has a set of eigenvalues which is not $\Aut(\C)$-invariant.
(In our arguments, this role is played by the surface $Y_g$ with the automorphism $f\circ f'$.)
For $K=\Q$, the characteristic polynomial of such an action has rational coefficients and so the above situation cannot happen.
\end{remark}

\begin{remark} \label{rem:3-folds?}
Using Freedman's classification of simply connected topological $4$-manifolds, one can prove that simply connected conjugated smooth complex projective surfaces are always homeomorphic.
On the other hand, Theorem \ref{thm0} shows that in any dimension at least $4$, there are simply connected conjugate smooth complex projective varieties which are not homeomorphic.
The case of dimension three remains open.
\end{remark}

\section{Non-homeomorphic conjugate varieties in each birational equivalence class} \label{sec:thm2}
In this section we prove Theorem \ref{thm1}.
For this purpose, let $Z$ be a given smooth complex projective variety of dimension $\geq 10$. 
Next, let $T_{g,4}$ be the four-dimensional smooth complex projective variety, defined in (\ref{eq:def:Tg}).
By (\ref{eq:thm0:H^2X}) and (\ref{eq:H2Y}), the second Betti number of $T_{g,4}$ equals $24g+26$.
We may therefore choose an integer $g\geq 1$ with
\begin{align} \label{eq:b2Tg}
b_2(T_{g,4}) > b_4(Z) +4 . 
\end{align}

From some projective space, $Z$ is cut out by finitely many homogeneous polynomials.
We denote the field extension of $\Q$ which is generated by the coefficients of these polynomials by $L$.
Since $L$ is finitely generated, and after possibly replacing $g$ by a suitable larger integer, we may pick an automorphism $\sigma$ of $\C$ which fixes $L$ and $i$ but not $\zeta_{2g+1}$.

Since $T_{g,4}$ has dimension $4$, it can be embedded into $\CP^9$.
The assumption $\dim(Z)\geq 10$ therefore ensures that we may fix an embedding of $T_{g,4}$ into the exceptional divisor of the blow-up $\hat Z$ of $Z$ in a point $p\in Z$.
We then define the following element in the birational equivalence class of $Z$:
\begin{align} \label{def:W}
W:=Bl_{T_{g,4}}(\hat Z) .
\end{align}

Since conjugation commutes with blow-ups, the $\sigma$-conjugate of $W$ is given by
\begin{align} \label{def:Wsigma}
W^\sigma=Bl_{T_{g,4}^\sigma}(\hat Z^\sigma) ,
\end{align}
where $\hat Z^\sigma$ is the blow-up of $Z^\sigma$ in a point $p^\sigma\in Z^\sigma$ and $T_{g,4}^\sigma$ is embedded in the exceptional divisor of this blow-up.
Since $\sigma$ fixes $L$, we have $Z^\sigma \cong Z$.
Therefore, $W$ and $W^\sigma$ are both birational to $Z$.
Hence, Theorem \ref{thm1} follows from the following result.

\begin{theorem} \label{thm2}
Let $W$ and $\sigma$ be as above.
Then the graded even-degree real cohomology algebras of $W$ and $W^\sigma$ are non-isomorphic.
\end{theorem}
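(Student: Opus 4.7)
Suppose for contradiction that $\phi\colon H^{2*}(W,\R)\stackrel{\sim}{\longrightarrow}H^{2*}(W^\sigma,\R)$ is an isomorphism of graded $\R$-algebras. The plan is to deduce from $\phi$ a weak isomorphism between the $4$-linear intersection forms on $H^2(T_{g,4},\R)$ and $H^2(T_{g,4}^\sigma,\R)$, and then to invoke Theorem \ref{thm0'} for a contradiction.

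By Lemma \ref{lem:blow-up} applied to the blow-up $\pi\colon W\to\hat Z$ with smooth center $T_{g,4}$ of codimension $r=\dim Z-4\geq 6$, there is the canonical decomposition
\[
H^4(W,\R)=\pi^*H^4(\hat Z,\R)\oplus j_*\bigl(p^*H^2(T_{g,4},\R)\bigr)\oplus\R\cdot[D]^2 ,
\]
and an analogous decomposition on $W^\sigma$. The outer summands together have dimension $b_4(\hat Z)+1=b_4(Z)+2$, whereas the middle \emph{new} summand has dimension $b_2(T_{g,4})$. By (\ref{eq:b2Tg}), the new summand is strictly larger than the union of the outer ones, by at least three dimensions.

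A direct computation using Lemma \ref{lem:mult}, the projection formula, and the identity $p_*(h^{\dim Z -5})=1$ (fundamental class of a fibre of $p\colon D\to T_{g,4}$) shows that for any $\beta_1,\dots,\beta_4\in H^2(T_{g,4},\R)$,
\[
\int_W\Bigl(\prod_{i=1}^{4} j_*(p^*\beta_i)\Bigr)\cup[D]^{\dim Z -8}=\pm\int_{T_{g,4}}\beta_1\cup\beta_2\cup\beta_3\cup\beta_4 ,
\]
with the analogous identity on $W^\sigma$. Thus the $4$-linear intersection form on $H^2(T_{g,4},\R)$ is detected by a specific cup-product combination inside $H^{2*}(W,\R)$: four factors in the new summand and $\dim Z -8$ further factors of $[D]$.

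To finish, it suffices to arrange that $\phi$ (i) sends the line $\R\cdot[D]$ into $\R\cdot[D^\sigma]$ modulo corrections that do not disturb the above integral identity, and (ii) sends the new summand $j_*(p^*H^2(T_{g,4},\R))$ isomorphically onto $j_*(p^*H^2(T_{g,4}^\sigma,\R))$, up to corrections of the same kind. Both points follow from an intrinsic characterization of these subspaces in the graded algebra $H^{2*}(W,\R)$, obtained by combining the cup-product structure on blow-ups (Lemmas \ref{lem:blow-up} and \ref{lem:mult}) with the strict inequality (\ref{eq:b2Tg}): since the new summand of $H^4(W,\R)$ is too large to be absorbed by $\phi$-images of pullback classes, it is distinguished by appropriate annihilator conditions against $\pi^*H^{2*}(\hat Z,\R)$ and powers of $[D]$, in the spirit of the kernel arguments $F_i$ from the proof of Proposition \ref{prop:psi}. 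The main obstacle is the precise formulation and verification of this intrinsic characterization; once it is in place, applying the displayed identity on both $W$ and $W^\sigma$ produces a weak isomorphism of multilinear forms on $H^2(T_{g,4},\R)$ and $H^2(T_{g,4}^\sigma,\R)$, contradicting Theorem \ref{thm0'} and thus proving Theorem \ref{thm2}.
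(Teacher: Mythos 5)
Your overall strategy is the paper's: reduce to Theorem \ref{thm0'} by showing that an algebra isomorphism $\phi$ of $H^{2\ast}(W,\R)$ must respect the blow-up decomposition and hence induce a weak isomorphism of the intersection data coming from $T_{g,4}$. Your integral identity expressing $\int_{T_{g,4}}\beta_1\cup\cdots\cup\beta_4$ as a cup product in $H^{2\ast}(W,\R)$ is correct and is a reasonable way to package the final contradiction. However, there is a genuine gap, and you name it yourself: ``the main obstacle is the precise formulation and verification of this intrinsic characterization.'' That characterization \emph{is} the proof; everything before it is bookkeeping. As written, your steps (i) and (ii) are assumptions, not conclusions, so the argument is a plan rather than a proof.

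Concretely, what is missing is the following chain, which the paper carries out in degree $2$ rather than degree $4$. First, one characterizes $H^2(Z,\R)\oplus[H]\cdot\R$ inside $H^2(W,\R)$ as the set of classes whose cup product map into $H^4(W,\R)$ has kernel of dimension $>b_4(W)/2$; this is where the inequality (\ref{eq:b2Tg}) and the ampleness of $-[H]|_{T_{g,4}}$ enter (the primitive part of $H^2(T_{g,4},\R)$ is large and is annihilated by nothing in $[D]\cdot\R$). Second, one writes $\gamma([D])=\alpha^\sigma+a[H^\sigma]+b[D^\sigma]$ and kills $\alpha^\sigma$ by a rank count on $\cup[D]$, then kills $a$ using $[H]^5\cup[D]=0$ versus $[H^\sigma]^6\neq 0$; only then does $\gamma$ genuinely preserve the line $\R\cdot[D]$ and the ideal $([D])$. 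Third --- and this is the step your ``annihilator conditions'' do not address --- even knowing $\gamma([D])=b[D^\sigma]$, the image $\gamma([D]\cup j_\ast(\alpha))$ a priori has components $j_\ast^\sigma$ of classes on $D^\sigma$ involving positive powers of $h^\sigma=c_1(\mathcal O_{D^\sigma}(1))$, e.g.\ a component along $[D^\sigma]\cup[H^\sigma]^{k+1}$; ruling these out and showing the induced class is a pullback from $T_{g,4}^\sigma$ requires the Lefschetz decomposition and Hard Lefschetz argument of Lemma \ref{lem:thm2}. Without these three steps your displayed integral identity cannot be transported from $W$ to $W^\sigma$, so the weak isomorphism of $4$-linear forms on $H^2(T_{g,4},\R)$ is not established. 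You should either carry out the characterizations explicitly (following the $F_i$-style kernel arguments you allude to) or accept that the proposal, as it stands, does not yet prove the theorem.
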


\begin{proof}
For a contradiction, let us assume that there is an isomorphism
\[
\gamma:H^{2\ast}(W,\R)\longrightarrow  H^{2\ast}(W^\sigma,\R)
\]
of graded $\R$-algebras.  
Using pullbacks, we regard $H^{2\ast}(Z,\R)\subseteq H^{2\ast}(\hat Z,\R)$ and $H^{2\ast}(Z^\sigma,\R)\subseteq H^{2\ast}(\hat Z^\sigma,\R)$ as subalgebras of $H^{2\ast}(W,\R)$ and $H^{2\ast}(W^\sigma,\R)$ respectively.
By Lemma \ref{lem:blow-up}, 
\begin{align}
\label{eq:H^2W}
H^2(W,\R)&=H^2(Z,\R)\oplus [H]\cdot \R\oplus [D]\cdot \R , \\
\label{eq:H^2Wsigma}
H^2(W^\sigma,\R)&=H^2(Z^\sigma,\R)\oplus [H^\sigma]\cdot \R\oplus [D^\sigma] \cdot \R ,
\end{align}
where $H\subset \hat Z$ and $H^\sigma \subset \hat Z^\sigma$ are the exceptional divisors above the blown-up points, and
\[
j:D\hookrightarrow W\ \ \ \text{and}\ \ \ j^\sigma :D^\sigma \hookrightarrow W^\sigma
\] 
are the exceptional divisors of the blow-ups along $T_{g,4}$ and $T_{g,4}^\sigma$ respectively.

Any cohomology class of positive degree on $Z$ is Poincar\'e dual to a homology class which does not meet the center of the blow-up $\hat Z\to Z$.
This shows that for any $\eta\in H^k(Z,\R)$, with $k\geq 1$, and for any $\alpha \in H^\ast(D,\R)$, 
\[ 
\eta\cup [H]=0\ \ \text{and}\ \ \ \eta\cup j_\ast(\alpha)=0 .
\]
A similar statement holds on $W^\sigma$ and we will use these properties tacitly. 

The restriction of $-[H]$ to $H\subset \hat Z$ is given by $c_1(\mathcal O_H(1))$; its restriction to $T_{g,4}$ is therefore ample.
By Lemma \ref{lem:blow-up}, we have 
\[
b_4(W)=b_4( Z)+b_2(T_{g,4}) + 2 .
\]
It then follows from (\ref{eq:b2Tg}) that the second primitive Betti number of $T_{g,4}$ is bigger than $b_4(W)/2$. 
Since $T_{g,4}$ is four-dimensional, and since $-[H]$ restricts to an ample class on $T_{g,4}$, it follows that $H^2(Z,\R)\oplus [H]\cdot \R$ inside $H^2(W,\R)$ is given by those classes whose multiplication on $H^4(W,\R)$ has kernel of dimension bigger than $b_4(W)/2$.
A similar statement holds for $H^2(Z^\sigma,\R)\oplus [H^\sigma]\cdot \R$ inside $H^2(W^\sigma,\R)$ and so $\gamma$ needs to take $H^2(Z,\R)\oplus [H]\cdot \R$ to $H^2(Z^\sigma,\R)\oplus [H^\sigma]\cdot \R$.
Since $\gamma$ is an isomorphism, it follows that
\begin{align} \label{eq:gamma(D)}
\gamma([D])= \alpha^\sigma +a\cdot [H^\sigma]+b\cdot [D^\sigma]
\end{align}
holds for some $\alpha^\sigma\in H^2(Z^\sigma,\R)$ and $b\neq 0$.

Cup product with $[D]$ on $H^2(W,\R)$ has two-dimensional image, spanned by $[D]\cup [H]$ and $[D]^2$.
For any $\beta^\sigma \in H^2(Z^\sigma,\R)$, the following classes are therefore linearly dependent:
\[
\gamma([D])\cup \beta^\sigma,\ \ \gamma([D])\cup [H^\sigma] \ \ \text{and}\ \ \gamma([D])\cup [D^\sigma] .
\] 
Since $b\neq 0$, this is only possible if $\alpha^\sigma\cup \beta^\sigma=0$ for all $\beta^\sigma$.
Hence, $\alpha^\sigma=0$.

Since $\alpha^\sigma=0$, it follows from $[D]\cup [H]\neq 0$ that $\gamma([H])\in H^2(Z^\sigma,\R)\oplus [H^\sigma]\cdot \R$ cannot be contained in $H^2(Z^\sigma,\R)$ and hence
\[
\gamma([H])=\tilde \alpha^\sigma + c\cdot [H^\sigma]
\]
for some $\tilde \alpha^\sigma \in H^2(Z^\sigma,\R)$ and $c\neq 0$.
As cup product with $[H]$ on $H^2(W,\R)$ has two-dimensional image, the above argument which showed $\alpha^\sigma=0$, also implies $\tilde \alpha^\sigma=0$.
Thus, $\gamma$ takes $[H]\cdot \R$ to $[H^\sigma]\cdot \R$.
It follows that $\gamma$ takes $H^2(Z,\R)$ to $H^2(Z^\sigma,\R)$, since these are the kernels of cup product with $[H]$ and $[H^\sigma]$ respectively.

Since $T_{g,4}$ is four-dimensional, we have $[H]^5\cup [D]=0$.
Then application of $\gamma$ yields:
\[
c^5 \cdot[H^\sigma]^5\cup (a\cdot [H^\sigma]+b\cdot [D^\sigma])=0 .
\]
Since $[H^\sigma]^5\cup [D^\sigma]$ vanishes, whereas $[H^\sigma]^6$ is nontrivial, it follows from $c\neq 0$ that $a$ vanishes.
Thus, $\gamma$ maps $[D]\cdot \R$ to $[D^\sigma]\cdot \R$ and we conclude that $\gamma$ respects the decompositions (\ref{eq:H^2W}) and (\ref{eq:H^2Wsigma}).

The latter implies that $\gamma$ induces an $\R$-linear isomorphism between the ideals $([D])\subseteq H^{2\ast}(W,\R)$ and $([D^\sigma])\subseteq H^{2\ast}(W^\sigma,\R)$.
In order to state the key-property of this isomorphism, we identify cohomology classes on $T_{g,4}$ and $T_{g,4}^\sigma$ with their pullbacks to the exceptional divisors $D$ and $D^\sigma$ respectively. 

\begin{lemma} \label{lem:thm2}
For every $\alpha\in H^{2k}(T_{g,4},\R)$, there exists a unique $\alpha^\sigma\in H^{2k}(T_{g,4}^\sigma,\R)$ such that
\[
\gamma([D]\cup j_\ast (\alpha))=[D^\sigma]\cup j_\ast^\sigma(\alpha^\sigma) .
\]
\end{lemma}

\begin{proof}
For $0\leq k\leq 2$, let us fix some $\alpha\in H^{2k}(T_{g,4},\R)$ and note that
\[
H^{2k+2}(W^\sigma,\R)=H^{2k+2}(Z^\sigma,\R)\oplus [H^\sigma]^{k+1}\cdot \R \oplus j^\sigma_{\ast} (H^{2k}(D^\sigma,\R))  .
\]
Since $\gamma$ maps $[D]$ to a multiple of $[D^\sigma]$, and since products of $[D^\sigma]$ with positive-degree classes on $Z^\sigma$ always vanish, the above identity shows
\[
\gamma([D]\cup j_\ast (\alpha))=[D^\sigma]\cup j_\ast^\sigma(\alpha^\sigma) +e\cdot [D^\sigma]\cup [H^\sigma]^{k+1} \ ,
\]
for some $\alpha^\sigma \in H^{2k}(D^\sigma,\R)$ and $e\in \R$.
The restrictions of $-[H]$ to $T_{g,4}$ and $-[H^\sigma]$ to $T_{g,4}^\sigma$ are ample classes $\omega \in H^2(T_{g,4},\R)$ and $\omega^\sigma\in H^2(T_{g,4}^\sigma,\R)$ respectively.

Now suppose that $\alpha$ in the above formula is primitive with respect to $\omega$.
Then the cup product of the above class 
with $\gamma([H])^{5-2k}$ vanishes.
Since $\gamma([H])$ is a multiple of $[H^\sigma]$, 
\[
[D^\sigma]\cup j_\ast^\sigma(\alpha^\sigma\cup (\omega^\sigma)^{5-2k}) +e\cdot(-1)^{k+1} j^\sigma_\ast((\omega^\sigma)^{6-k})=0 .
\]
This implies firstly that $e=0$ and secondly that $\alpha^\sigma\cup (\omega^\sigma)^{5-2k}$ vanishes as class on $D^\sigma$.
By the Hard Lefschetz Theorem, the latter already implies that $\alpha^\sigma$, which a priori is only a class on $D^\sigma$, is in fact a primitive class on $T_{g,4}^\sigma$. 

For arbitrary $\alpha\in H^k(T_{g,4},\R)$, the existence of $\alpha^\sigma$ now follows -- since $\gamma$ takes $[H]\cdot \R$ to $[H^\sigma]\cdot \R$ -- from the Lefschetz decompositions with respect to $\omega$ and $\omega^\sigma$; 
the uniqueness is immediate from Lemma \ref{lem:blow-up}.
This concludes Lemma \ref{lem:thm2}.
\end{proof}

By Lemma \ref{lem:thm2}, we are now able to define an $\R$-linear map
\[
\phi:H^{2\ast}(T_{g,4},\R)\longrightarrow  H^{2\ast}(T_{g,4}^\sigma,\R) ,
\]
by requiring
\[
\gamma ([D]\cup j_\ast(\alpha))=b\cdot \gamma([D])\cup j^\sigma_\ast (\phi(\alpha))
\]
for all $\alpha \in H^\ast(T_{g,4},\R)$, where $b$ is, as above, the nontrivial constant with $\gamma([D])=b\cdot [D^\sigma]$.
Applying the same argument to $\gamma^{-1}$, we obtain an $\R$-linear inverse of $\phi$.

By Theorem \ref{thm0'}, $\phi$ cannot be an isomorphism of algebras and so we will obtain a contradiction as soon as we have seen that $\phi$ respects the product structures.
For this purpose, let $\alpha$ and $\beta$ denote even-degree cohomology classes on $T_{g,4}$.
Then, by Lemma \ref{lem:blow-up} and \ref{lem:mult}, it suffices to prove
\[
b\cdot \gamma([D])^3\cup j^\sigma_\ast(\phi(\alpha\cup \beta))=b\cdot \gamma([D])^3\cup j^\sigma_\ast(\phi(\alpha)\cup \phi(\beta)) .
\]
Using (\ref{eq1:mult}), the latter is seen as follows:
\begin{align*}
b\cdot \gamma([D])^3\cup j^\sigma_\ast(\phi(\alpha\cup \beta))&=\gamma([D])^2\cup \gamma([D]\cup j_\ast(\alpha\cup \beta)) \\	
																														&=\gamma([D]^2\cup j_\ast(1)\cup j_\ast(\alpha\cup \beta)) \\
																														&=\gamma([D]\cup j_\ast(\alpha)\cup [D]\cup j_\ast(\beta)) \\
																														&=b^2\cdot \gamma([D])^2\cup j^\sigma_\ast(\phi (\alpha))\cup j^\sigma_\ast(\phi(\beta)) \\
																														&=b^2\cdot \gamma([D])^2\cup j^\sigma_\ast(1)\cup j^\sigma_\ast(\phi(\alpha)\cup \phi(\beta))\\
																														&=b\cdot \gamma([D])^3\cup j^\sigma_\ast(\phi(\alpha)\cup \phi(\beta)) .
\end{align*}
This concludes the proof of Theorem \ref{thm2}.
\end{proof}

\section{Examples with non-isotrivial deformations} \label{sec:deformations}
In this section we prove that the examples in Theorem \ref{thm1} may be chosen to have non-isotrivial deformations.
Here, a family $(X_s)_{s\in S}$ of varieties over a connected base $S$ is called non-isotrivial if there are two points $s_0,s_1\in S$ with $X_{s_0}\ncong X_{s_1}$.
The idea of the proof is to vary the blown-up point $p\in Z$ in the construction of Section \ref{sec:thm2}.
In order to state our result, we write $X\sim Y$ if two varieties $X$ and $Y$ are birationally equivalent.

\begin{theorem} \label{thm1'}
Let $Z$ be a smooth complex projective variety of dimension $\geq 10$.
Then there is a non-isotrivial family $(W_p)_{p\in U}$ of smooth complex projective varieties $W_p$ over some smooth affine variety $U$, and an automorphism $\sigma\in \Aut(\C)$ such that for all $p\in U$:
\[
W_p\sim Z \sim W^\sigma_p\ \ \text{and}\ \ H^{2\ast}(W_p,\R)\ncong H^{2\ast}(W_p^\sigma,\R).
\]
\end{theorem}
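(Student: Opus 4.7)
The plan is to run the construction of Section \ref{sec:thm2} in families, letting the blown-up point $p \in Z$ vary. Fix $g$, $T_{g,4}$, and $\sigma \in \Aut(\C)$ as in the proof of Theorem \ref{thm1}, and fix an $L$-rational base point $p_0 \in Z$, where $L$ is the field of definition of $Z$, which is fixed by $\sigma$. I would then choose $U \subseteq Z$ to be an affine open neighborhood of $p_0$, defined over $L$ and small enough that the tangent bundle $TZ|_U$ is trivial, and fix a trivialization. Blowing up the relative diagonal $\Delta_U \subseteq Z \times U$ produces $\widetilde{Z} \to U$ whose fiber over $p$ is $Bl_p Z$ and whose exceptional divisor is a $\CP^{n-1}$-bundle over $U$ (with $n = \dim Z$); the trivialization identifies this bundle with $U \times \CP^{n-1}$. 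Embedding $T_{g,4}$ as a fixed subvariety of $\CP^{n-1}$ (possible since $n-1 \geq 9$) and blowing up the corresponding sub-bundle inside $\widetilde{Z}$ yields a smooth projective family $\mathcal{W} \to U$ whose fiber $W_p$ over $p$ is precisely the variety constructed in Section \ref{sec:thm2}.

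Next, I would check the pointwise conditions for each $p \in U$. Birationality is immediate: both $W_p$ and $W_p^\sigma = Bl_{T_{g,4}^\sigma}(Bl_{p^\sigma} Z^\sigma)$ are blow-ups of conjugates of $Z$, and since $\sigma$ fixes $L$ we have $Z^\sigma \cong Z$. The cohomological non-isomorphism $H^{2\ast}(W_p, \R) \ncong H^{2\ast}(W_p^\sigma, \R)$ is exactly Theorem \ref{thm2} applied to the fiber $W_p$; the proof of Theorem \ref{thm2} only uses the intrinsic properties of $T_{g,4}$ and the Betti bound $b_2(T_{g,4}) > b_4(Z) + 4$, neither of which depends on the choice of point in $Z$ that was blown up.

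The hard part will be non-isotriviality of the family $\mathcal{W} \to U$. I would establish it via a Kodaira--Spencer argument: varying $p$ in $U$ produces a genuine first-order deformation of the triple $(Bl_p Z, H_p, T_{g,4} \subseteq H_p)$, giving a nonzero class in $H^1(W_p, T_{W_p})$. To rule out that this class is automorphic (in the image of $H^0(W_p, T_{W_p})$), I would observe that an infinitesimal automorphism of $W_p$ descends to one of $Z$ fixing $p$ and preserving the embedded $T_{g,4} \subseteq H_p$; preserving the latter forces the induced action on $T_p Z$ to lie in the $\PGL_n$-stabilizer of the embedded $T_{g,4}$, which is generically small. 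For generic $p_0$, the subspace of tangent directions at $p_0$ realized by such automorphisms is then proper in $T_{p_0} U = T_{p_0} Z$, so the Kodaira--Spencer class has a non-automorphic component and the family is non-isotrivial.

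The main obstacle is making this last dimension count go through uniformly in $Z$, particularly when $\Aut(Z)$ is large (for instance when $Z$ is homogeneous, like $\CP^n$ or an abelian variety). The cleanest fix is to first replace $Z$, if necessary, by a birational model with smaller automorphism group---for example, a blow-up of $Z$ at a few additional points in general position---so that the stabilizer of a generic point is guaranteed to be small enough for the count to succeed; this is harmless since the statement only concerns the birational class of $Z$. After this reduction, the Kodaira--Spencer analysis above applies unconditionally and produces the desired non-isotrivial family.
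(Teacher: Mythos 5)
Your construction of the family is the same as the paper's (blow up the relative diagonal over an open $U\subseteq Z$ with trivialized tangent bundle, identify the exceptional divisor with $U\times\CP^{n-1}$, embed $U\times T_{g,4}$ and blow up again), and the pointwise assertions $W_p\sim Z\sim W_p^\sigma$ and $H^{2\ast}(W_p,\R)\ncong H^{2\ast}(W_p^\sigma,\R)$ do follow from Theorem \ref{thm2} exactly as you say. The gap is in the non-isotriviality step, which is the only non-routine part of the statement. Your Kodaira--Spencer argument hinges on the claim that the kernel of the Kodaira--Spencer map $T_pU\to H^1(W_p,T_{W_p})$ consists exactly of those directions realized by global vector fields on $Z$ whose induced action on $\CP(T_pZ)$ is compatible with the embedded copy of $T_{g,4}$. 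Neither half of this is proved: (i) the assertion that an infinitesimal automorphism of $W_p$ descends to $Z$ and preserves the embedded $T_{g,4}$ requires an argument (exceptional divisors of the two blow-ups must be shown to be preserved); and (ii), more seriously, the statement that triviality of the first-order deformation forces such a vector field to exist -- i.e.\ the identification of $\ker(\mathrm{KS})$ for this two-step blow-up family -- is a genuine deformation-theoretic computation that is only gestured at. (Also, ``in the image of $H^0(W_p,T_{W_p})$'' is not literally meaningful for a class in $H^1$; what you need is the exact sequence relating $H^0(Z,T_Z)$, $T_pZ$ and $H^1$ of the blow-up.) Finally, your reduction step (blowing up extra general points of $Z$ to shrink the automorphism group) is itself unjustified as stated, and note that it increases $b_4(Z)$, so $g$ would have to be re-chosen to preserve the inequality $b_2(T_{g,4})>b_4(Z)+4$.

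The paper sidesteps the infinitesimal analysis entirely with a global, discrete argument: by the cohomological decomposition arguments from the proof of Theorem \ref{thm2}, any isomorphism $W_p\to W_q$ must respect the decomposition (\ref{eq:H^2W}), hence preserves the exceptional divisors and descends to an automorphism of $Z$ carrying $p$ to $q$. Applying this with $p=q$ shows that $W_p$ itself contains a Zariski open subset with trivial tangent bundle containing two points that no automorphism interchanges; replacing $Z$ by this birational model, one may assume $U$ already contains two such points $p,q$, whence $W_p\ncong W_q$. If you want to complete your proof, the most economical route is to prove exactly this descent statement for genuine isomorphisms (which you essentially need anyway for step (i)) and then run the paper's replacement trick, rather than attempting the Kodaira--Spencer computation and the stabilizer dimension count.
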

\begin{proof} 
As in Section \ref{sec:thm2}, we may pick some $\sigma\in \Aut(\C)$ and some $g\geq 1$ such that 
\[
Z\cong Z^\sigma,\ \ \sigma(i)=i,\ \ \sigma(\zeta_{2g+1})\neq \zeta_{2g+1}\ \ \text{and}\ \ b_2(T_{g,4})> b_2(Z)+4 .
\]

Next, let $U\subseteq Z$ be a Zariski open and dense subset with trivial tangent bundle.
Let $\Delta\subseteq U\times Z$ be the graph of the inclusion $U\hookrightarrow Z$ and consider the blow-up $Bl_{\Delta}(U\times Z)$.
The normal bundle of $\Delta$ in $U\times Z$ is trivial, since $U$ has trivial tangent bundle.
Hence, the exceptional divisor of $Bl_{\Delta}(U\times Z)$ is isomorphic to $\Delta\times \CP^{n-1}$.
Since $n\geq 10$, we may fix an embedding of $\Delta\times T_{g,4}$ into this exceptional divisor and consider the blow-up 
\[
Bl_{\Delta\times T_{g,4}}(Bl_{\Delta}(U\times Z)) .
\] 
Projection to the first coordinate then gives a family 
\[
(W_{p})_{p\in U}
\] 
of smooth complex projective varieties, birational to $Z$. 
Then, for all $p\in U$, the conjugate varieties $W_p$ and $W_p^\sigma$ are as in (\ref{def:W}) and (\ref{def:Wsigma}) respectively.
Thus, $W_p\sim Z$ and $W_p^\sigma \sim Z^\sigma$.
By Theorem \ref{thm2} and since $Z\cong Z^\sigma$, we obtain for all $p\in U$: 
\[
W_p\sim Z \sim W^\sigma_p\ \ \text{and}\ \ H^{2\ast}(W_p,\R)\ncong H^{2\ast}(W_p^\sigma,\R).
\]
To conclude Theorem \ref{thm1'}, it therefore remains to prove

\begin{claim} \label{claim:deformations}
After replacing $Z$ by another representative of its birational equivalence class, and for a suitable choice of $U$, the family $(W_{p})_{p\in U}$ is non-isotrivial.
\end{claim}

Let us prove this claim. 
By the arguments of Theorem \ref{thm2}, one sees that any isomorphism $g:W_p\to W_q$ induces an isomorphism $g^\ast$ on cohomology which respects the decomposition (\ref{eq:H^2W}).
This implies that $g$ respects the exceptional divisors and thus induces an isomorphism of $Z$ which takes $p$ to $q$.

The above argument, applied to $p=q$, shows that $W_p$ admits no automorphism which takes points from the exceptional divisors to $Z-\left\{p\right\}$.
In particular, $W_p$ contains a Zariski open subset with trivial tangent bundle and with two points that cannot be interchanged by an automorphism of $W_p$.
Since $W_p$ is birational to $Z$, we may therefore, after possibly replacing $Z$ by another representative of its birational equivalence class, assume that $U$ already contains points $p$ and $q$ which cannot be interchanged by any automorphism of $Z$.
Then, as we have seen, $W_p$ and $W_q$ are not isomorphic.
This finishes the proof of Claim \ref{claim:deformations} and so concludes Theorem \ref{thm1'}.
\end{proof}

\begin{remark} \label{rem:deformations}
In contrast to Theorem \ref{thm1'}, most of the previously known examples of non-homeomorphic pairs of conjugate varieties tend to be rather rigid and do in general not occur in non-isotrivial families.
This was already observed by D.\ Reed in \cite{reed}.
However, it is often possible to obtain non-isotrivial families as products of previously known examples with non-rigid varieties, e.g.\ one could take products of Serre's examples \cite{serre} with a smooth hypersurface of degree at least $4$ in $\CP^3$, since the latter are simply connected and come in non-isotrivial families.
\end{remark}

\section*{Acknowledgement}
I am very grateful to my advisor Daniel Huybrechts for many useful discussions, for his continued support, and in particular for pointing me to \cite{charles}. 
I would also like to thank the referee for carefully reading my paper.
This paper was carried out while the author was supported by an IMPRS Scholarship of the Max Planck Society.

\end{document}